\newtheorem{thm}{Theorem}
\newtheorem{propo}{Proposition}
\newtheorem{lemma}{Lemma}
\newtheorem{coro}{Corollary}
\title{Sphere geometry and invariants}
\author{Oliver Knill}
\date{February 12, 2017}
\address{Department of Mathematics \\ Harvard University \\ Cambridge, MA, 02138, USA }
\subjclass{05C99, 11C20, 05E4}
\keywords{Graph theory, simplicial complexes, arithmetic}
\begin{document}

\maketitle

\begin{abstract}
A finite abstract simplicial complex $G$ defines two finite simple graphs: 
the Barycentric refinement $G_1$, connecting two simplices if one is a subset
of the other and the connection graph $G'$, connecting two simplices if they intersect. 
We prove that the Poincar\'e-Hopf value
$i(x)=1-\chi(S(x))$, where $\chi(S(x))$ is the Euler characteristics of the 
unit sphere $S(x)$ of a vertex $x$ in $G_1$, agrees with the Green 
function $g(x,x)=(1+A')^{-1}_{xx}$, where $A'$ is the
adjacency matrix of the connection graph $G'$ of the complex $G$. 
By unimodularity $\psi(G) = {\rm det}(1+A')=\prod_x (-1)^{\rm dim(x)} = \phi(G)$, 
the Fredholm matrix $1+A'$ is in ${\rm GL}(n,\mathbb{Z})$, where $n$ is the number of simplices in $G$.
We show that the set of possible unit sphere topologies in $G_1$ are combinatorial invariants of the 
complex $G$, and establish so that also the Green function range of $G$ is a combinatorial invariant.
The unit sphere character formula $g(x,x)=i(x)$ applies especially for the prime graph 
$G(n)$ and prime connection graph $H(n)$ on square free integers in $\{2, \dots,n\}$ playing the role
of simplices. In $G(n)$, integers $a,b$ are connected if $a|b$ or $b|a$ and where in $H(n)$ two $a,b$ 
are connected if ${\rm gcd}(a,b)>1$.
The Green function $g(x,x)$ in $H(n)$ relate there to the index values $i(x)$ in $G(n)$.
To prove the invariance of the unit sphere topology we use that all unit spheres
in $G_1$ decompose as $S^-(x) +  S^+(x)$, where $+$ is the join and $S^-$ is a sphere. 
The join renders the category $X$ of simplicial complexes into a monoid, where the empty complex 
is the $0$ element and the cone construction adds 1. The augmented Grothendieck group 
$(X,+,0)$ contains the graph and sphere monoids $({\rm Graphs}, +,0)$ and $({\rm Spheres},+,0)$.
The Poincar\'e-Hopf functionals 
$G \to i(G)=1-\chi(G)$ or $G \to i_G(x)=1-\chi(S_G(x))$ as well as the volume
are multiplicative functions on $(X,+)$. For the sphere group, both $i(G)$ as well as $\psi(G)$ are 
characters. The join $+$ can be augmented by a product $\cdot$ so that we have a
commutative ring $(X,+,0,\cdot,1)$ in which there are both additive and multiplicative primes 
and which contains as a subring of signed complete complexes $ \pm K_i$
isomorphic to the integers $(Z,+,0,\cdot,1)$. Both for addition $+$ and multiplication $\cdot$, 
the question of unique prime factorization appears open. 
\end{abstract}

\section{Preface}
A finite abstract simplicial complex $G$ has a Barycentric refinement $G_1$ which is the
Whitney complex of a graph. If $\chi$ denotes the Euler characteristic functional,
we identify the values $i(x) = 1-\chi(S(x))$ of unit spheres $S(x)$ in $G_1$ 
as Green function values $g(x,x)=(1+A(G'))^{-1}_{xx}$, where $A(G')$ is the adjacency matrix of
the connection graph $G'$ of $G$. The graph $G'$ has like $G_1$ the set of simplices in $G$
as vertex set but connects two if they intersect; in $G_1$, two simplices are connected 
if one is contained in the other. 
Having established that $1+A(G')$ is unimodular \cite{Unimodularity},
the matrix entries $g(x,y)$ became interesting. We can now look at the 
connection graph $G_1'$ of $G_1$. Its Green function values are again related to 
unit spheres in the Barycentric refinement $G_2$ of $G_1$. We compare the unit sphere topologies
in $G_1$ with the unit sphere topologies of $G_2$. We show here that in 
$G_2$, no new topologies appear. It follows that the sphere index spectrum, the 
set of Green function values is a combinatorial invariant of a simplicial complex 

\begin{figure}[!htpb]
\scalebox{0.2}{\includegraphics{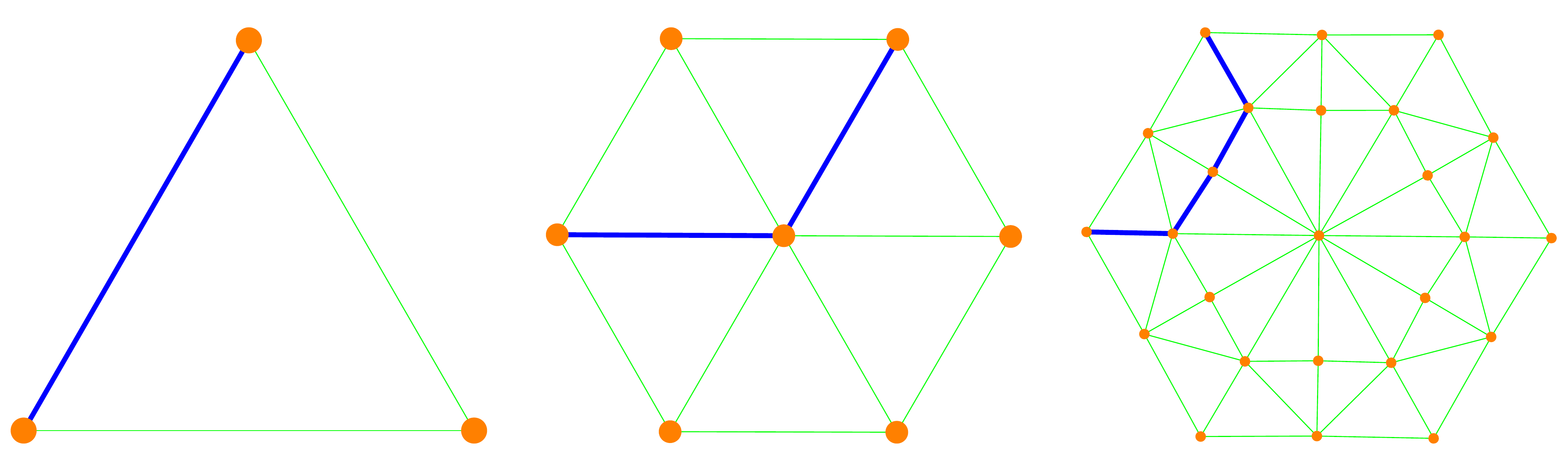}}
\caption{
We see the triangle $G$ and two of its Barycentric refinements $G_1$ and $G_2$. 
A choice of unit spheres in $G_1$ and $G_2$ are marked. 
\label{unitsphere1}
}
\end{figure}

\begin{figure}[!htpb]
\scalebox{0.2}{\includegraphics{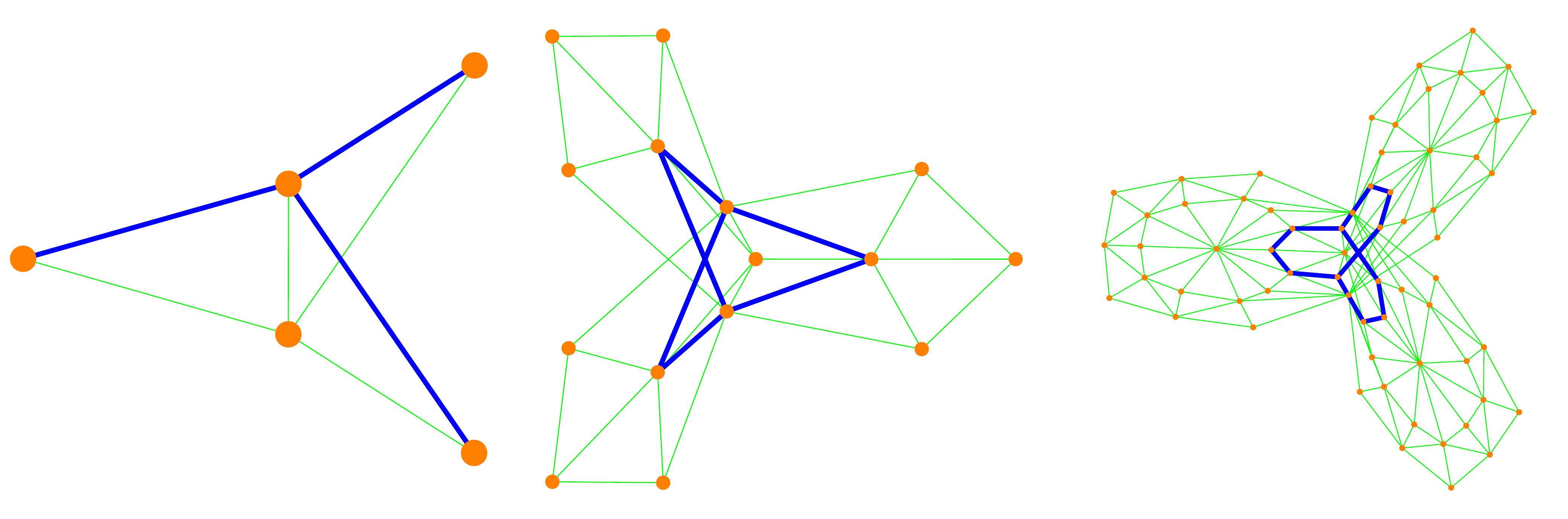}}
\caption{
For the windmill graph $G$, unit spheres
in $G_1$ can have topologies different from unit sphere topologies
in $G$. While all unit spheres in $G$ are contractible, there is a
unit sphere $S(x)$ in $G_1$ which as a suspension $P_2+P_3$ of the 3-point graph $P_3$
has $\chi(S(x))=\chi(P_2) + \chi(P_3)-\chi(P_2) \chi(P_3)=-1$. 
But in $G_2$, no new topologies appear in unit spheres.
\label{unitsphere2}
}
\end{figure}

\section{Introduction}

The quest to find invariants of topological spaces
is particularly concrete when searching for combinatorial invariants in
abstract finite simplicial complexes. These are quantities which do not change when applying a
Barycentric refinement \cite{Bott52}. Abstract simplicial complexes have
a surprisingly rich geometry despite the fact that they are one of the simplest objects
mathematics knows. Every partially ordered set for example defines a complex, the 
order complex. Besides matroids also
graphs are a source for complexes, like subcomplexes of the clique complex 
or then graphic matroids. Other type of simplicial complexes were introduced by
Jonsson in \cite{JohnsonSimplicial}. 
Graph theory enters naturally as any Barycentric refinement $G_1$ of an 
arbitrary abstract simplicial complex is already is the Whitney clique complex of a graph. The faces 
of $G$ are the vertices of $G_1$ and two faces connected if one is contained in the other.
The complex $G$ also defines the connection graph $G'$ on the same vertex set, 
where two simplices connect if they intersect. 
Both $G_1$ and $G'$ produce aspects of the geometry on $G$ which are well accessible as graphs
are not only intuitive, they also serve well as data structures.  \\

Examples of combinatorial invariants of simplicial complexes are
simplicial cohomology, Euler characteristic, homotopy groups (discrete notions of spheres 
and discrete notions of homotopy allow them purely combinatorically),
the Bott invariants \cite{Bott52}, the clique number (which is $1$ plus the maximal dimension), 
minimal entries of a $f$-vector \cite{Lutz1999,Lutz2005},
Wu characteristic, as well as connection cohomology attached to Wu characteristic \cite{Wu1953,valuation}
or minimal possible entries in the $f$-matrix. 
Examples of quantities which are not combinatorial invariants are the $f$-vector itself (as it
gets multiplied by a fixed upper triangular matrix when applying a refinement), 
the $f$-matrix, telling about the cardinalities pair intersections, 
the chromatic number (it can decrease under refinements but will 
stabilize already after one step to the clique number) or dimension (it can increase 
under refinements and will converge to the maximal dimension when iterating the Barycentric
refinement process), neither is the Fredholm characteristic $\psi$ 
(as it stabilizes to $1$ already after one Barycentric refinement).  \\

\begin{figure}[!htpb]
\scalebox{0.8}{\includegraphics{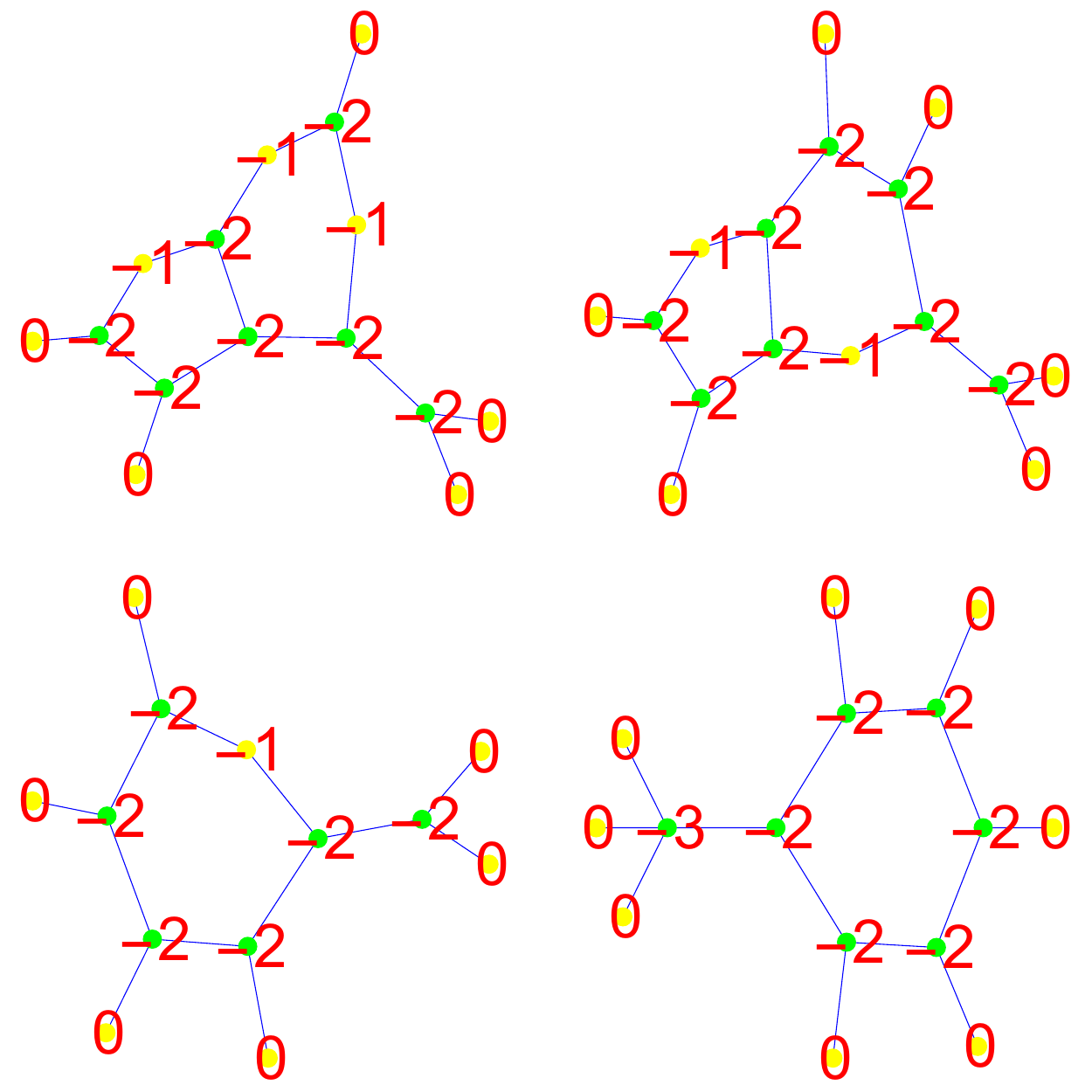}}
\caption{
Adenine, Guanine, Cytosine and Thymine graphs with 
sphere spectrum. These are all $1$-dimensional 
graphs. The Fredholm Characteristic $\psi$ is $1$ for Adenin
and $-1$ for the others. The Euler characteristic $\chi$ is
$-1$ for Adenine and Guanine and $0$ for Cytosine and 
Thymine. The sphere spectral values $\sigma$ depend only on 
vertex degrees as for all $1$-dimensional graphs. It is 
$\sigma=\{-2,-1,0\}$ for the first three.
Thymine alone has a different sphere spectrum $\sigma=\{-3,-2,0\}$. 
The values $\psi,\phi,\sigma$ are different for all. 
Also the $f$-vectors $(15,16),(16,17),(13,13),(15,15)$ differ. 
All four graphs are prime in the Zykov monoid of graphs. 
\label{dna}
}
\end{figure}

Focusing on finite simplicial complexes $G$ allows to do geometry on finite sets but
still be close to classical topology: the topology of a smooth compact manifold $M$ for example is completely
determined by the topology of the simplicial complex $G$ defined by a triangulation of $M$ in the form of a $d$-graph
and the abstract Whitney complex describing it contains all topological information about $M$.
While it is possible to look at topological invariants by embedding the complex into a continuum
or by realizing it as a polytop in an Euclidean space, we prefer here to look at 
invariants defined and computed in a finite combinatorial manner. This is 
pragmatic computer science point of view. \\

Both the set of abstract finite simplicial complexes as well as the set of finite simple graphs form a 
Boolean lattice: one can intersect and take unions.
They both do not form a Boolean ring however as the Boolean addition,
the symmetric difference operation, $A \Delta B = A \cup B \setminus A \cap B$ is no more in the
same category. A valuation is a numerical quantity $X$ satisfying $X(G \cup H)  + X(G \cap H) = X(G) + X(H)$.
If we had a Boolean ring, a valuation would satisfy the linearity condition $X(G \Delta H) = X(G) + X(H)$. 
By discrete Hadwiger, any valuation is of 
the form $X(G) = f(G) \cdot \chi$, where $\chi$ is a fixed vector and $f(G)$ is the $f$-vector of $G$. 
Since the only eigenvector of the Barycentric operator $f(G) \to f(G_1)$ is $(1,-1,1,-1, \dots)$
Euler characteristic is the only combinatorial invariant which is also a valuation. While Euler
characteristic $\chi(G) = \sum_x \omega(x)$ with $\omega(x)=(-1)^{{\dim}(x)}$ is pretty unique among 
valuations, there are other functionals. One is the Fermi characteristic $\psi(G) = \prod_x \omega(x)$,
a multiplicative cousin of Euler characteristic.  \\

Then there are multi-variate versions like the Wu characteristic $\omega(G) = \chi \cdot F(G) \chi$, 
where $F(G)$ is the $f$-matrix $F(G)_{ij}$ counting the number of intersections of $i$ and $j$-dimenensional 
simplices in the complex. 
Also the Wu characteristic as well as higher order versions are combinatorial invariants. 
A quadratic valuation $X(G,H)$ is a map for which $G \to X(G,H)$ and $H \to X(G,H)$ are both
valuations. Related to this had been a question of Gruenbaum \cite{Gruenbaum1970}, 
whether higher order Dehn-Sommerville relations exist. 
We answered this affirmatively in \cite{valuation}. The analysis there shows that Wu characteristic is
the only combinatorial invariant among multi-linear valuations. Connection calculus leads to other
combinatorial invariants like the Betti numbers of connection cohomology. That this is useful has already
been demonstrated in examples. It allows to distinguish the cylinder and the Moebius strip for example. \\

Quantities satisfying the multiplicative valuation property 
$X(G \cup H)$ $X(G \cap H)$ $= X(G) X(H)$ must
be additive valuations after taking logs and indeed, the Fredholm determinant is an example.
It is related to the valuation $f(G)$ counting the number of odd-dimensional simplices in $G$. \\

We should point out that cohomology with real-valued cocycles is well defined also for a finitist who does not
accept infinity. The reason is that the cohomology groups are determined by Hodge as
the nullity ${\rm dim}({\rm ker}(L_k))$ of finite integer-valued matrices $L_k$, the form Laplacians
$L_k$. The dimension of the kernel as well as the kernel basis itself can be obtained by row reduction, 
which is a finite process and done on integer matrices. So, cohomology is an acceptable concept for a 
finitist. For more about the complexity, see \cite{Joswig2004}.
This happens already when following the lead of Kirchhoff, Betti and Poincar\'e. 
In topology however
still, Euclidean realizations are used even for definitions like Barycentric refinements. But one
perfectly can stay within the realm of simplicial complexes or better in the realm of 
much the more accessible graph category, on which the complex is a subcomplex of the Whitney clique complex. \\

How do simplicial complexes relate to traditional topology? The usual approach is to realize the
complex in some sufficiently high dimensional Euclidean space and call two complexes $G,H$ topologically 
equivalent if their topological realizations $\overline{G},\overline{H}$ are
homeomorphic in the classical point set topology sense. 
A topological invariant of the complex is then a quantity
produces homeomorphic geometric realizations. For abstract finite
simplicial complexes, the notion of topological invariant is equivalent to the notion of 
combinatorial invariant but the later works in the more restricted axiom system
of finite mathematics. \\

The quest to find a complete finite set of invariants which allow to distinguish an arbitrary pair
of simplicial complexes has long shown to be unattainable: it is not possible to build a Turing machine 
which can distinguish any pair of abstract finite simplicial complexes. Markov used in 1958 the 
unsolvability of the word problem by Novikov to show that the homeomorphism problem 
is not solvable. For surveys, see \cite{Stillwell80,Andrews2005,Poonen2014}. \\

A constructivist would insist on giving a concrete homeomorphism which establishes the topological equivalence, 
but this is not so easy, as the Euclidean realizations are piecewise linear pieces of simplices, glued 
together in some high dimensional space. 
Also the complexity to decide whether a given pair of finite simplicial complexes are topologically equivalent
can be tough. Given two concrete complexes, how do we (by just dealing with finite sets) decide whether they
are topologically equivalent? We need other morphism for simplicial complexes in order to mirror
topological equivalence. One answer is to use a finite Zariski type topologies which is purely combinatorial.
It is conceivable however that we have to go through an exponentially large set of 
possible admissible finite topologies and check in each case the graph isomorphism problem. \\

Let us assume that the complex is the Whitney complex of a graph. As mentioned already, this is not much of a loss
of generality as a Barycentric refinement of an abstract simplicial complex is always the Whitney complex of a graph. 
Since a graph has a natural metric, the geodesic distance, one could see such a complex as an example of a 
finite metric space. But since every singleton set $\{x\}$ is both open and closed in that metric,
the topology is the discrete topology and not that interesting. A weaker topology is Zariski like: it the 
topology on the vertex set of $G_1$ defined in such a way that the vertex set of $H_1$ of any subcomplex $H$ of $G$ is closed.
This defines a finite topology on the vertex set of $G_1$, (which is in general not Hausdorff similarly as
Zariski topology), the homeomorphisms are still the graph isomorphisms. 
(The analogy is that subgraphs of $G$ play the role of varieties and homeomorphism the 
role of regular maps. The algebraic nature of the ``varieties" comes from the fact that we only 
allow subgraphs of $G$ and not subgraphs of $G_1$. The later would give the discrete topology.) \\

The Zariski idea is more flexible as
we can use the same definition to form much weaker topologies by starting with a smaller 
set of subgraphs of $G$ defining a sub-base of the topology. For example, we can get topologies on 
the second refinement $G_2$ by taking the set of graphs $H_{2}$ as closed, where $H$ is a subgraph of $H$. 
We have proposed \cite{KnillTopology} to insist that a finite topology on a graph should be given 
by a subbase and have the property that 
the nerve graph of a sub-base should be homotopic to the graph and that a dimension condition is 
satisfied for intersections. The reasons are that we want any reasonable notion of cohomology to 
agree with the natural topology of the graph and insist that any notion of homeomorphism should
honor the concept of dimension. \\

Unlike the quest to find a complete set of invariants for a simplicial complex,
the task to find concrete combinatorial invariants of an abstract simplicial complex is more accessible.
It is part of what we do here. An example of quantities which are interesting are valuations satisfying
$X(G \cup H) + X(G \cap H) = X(G) + X(H)$. 
The discrete Hadwiger theorem \cite{KlainRota} has classified 
all valuations on a complex and identified Euler characteristic as the only combinatorial 
invariant among them.  
It states that if the maximal dimension of $G$ is $d$, the space of valuations is $d+1$ dimensional.
More invariants can be obtained by looking a multi-linear valuations 
or multiplicative invariants $X(G \cup H) X(G \cap H) = X(G) X(H)$ like Fredholm characteristic. \\

Since Fredholm characteristic ${\rm det}(1+A(G'))$ 
is $\{-1,1\}$-valued for simplicial complexes we can
look at the Green function values $g(x,y) = (1+A')^{-1}_{xy}$. Investigating these integers 
led to the current paper. It is here related to locally defined combinatorial invariants,
the sphere Euler characteristic spectrum, which is defined by the 
collection of indices $i(S(x)) = 1-\chi(S(x))$ which unit spheres $S(x)$ in the complex can have. 
One could look at higher order versions like the quadratic sphere spectrum
$1-\chi(S(x) \cap S(y))$, where $x,y$ run both over all the simplices in $G$. This
however does not match the off-diagonal Green function values in general and the off-diagonal values $(1+A')^{-1}_{xy}$
with $x \neq y$ remain at the moment still unidentified. Analogies from physics suggest that all Green function values
should have some natural interpretation and are possibly of a dynamical nature. 
Work like \cite{ChungYau2000} show how close the discrete case can be to the continuum. 
In the shifted Fredholm case, it is exciting that 
the Green function values are integers and quantized. There would be other numbers 
to consider like $(1+A')^{k}_{xy}$ but we seem only get topological invariants for $k=-1$. \\

In the context of doing arithmetic on graphs, the proof uses join operations.
In \cite{HararyGraphTheory} pp.21 the definition is attributed to
A.A. Zykov who introduced it in 1949 \cite{Zykov}. There should be no confusion with the notion
of ``join" used for partially order sets and especially for simplicial complexes \cite{RhodesSilva}.
Topologically, the Zykov join of two graphs has the same properties as in the continuum 
and which appear in textbooks like \cite{RourkeSanderson,Hatcher}.
In the discrete, the construction does not need to take any quotient topologies: the vertex set of the 
join of two graphs is the union of the vertex sets and the edge set is the union of the edge sets of the factors together
with all pairs belonging to different graphs. The zero element $0$ is the empty graph. As in the continuum, the 
sum of two spheres is a sphere again. Adding $1$ to a graph $G+1$ is the cone construction.  \\

Unlike for the arithmetic
of numbers, where $1$ is the only additive prime, there are more additive primes in the join monoid. 
The $0$-dimensional sphere $P_2$ for example is prime. Adding such a sphere to a graph is the suspension. 
The maximal dimension of a sum $G+H$ is the sum of the maximal dimensions plus $1$. 
But there are still mysteries about the Zykov monoid. Unlike in the continuum, we suspect (but do not know nor dare to 
conjecture) that in the discrete, the join operation is a unique factorization monoid, where the empty graph plays the
role of the $0$ element and where the 1-point graph $K_1$ is the smallest example of an additive prime. The complete graph $K_n$ 
decomposes as $K_1 + \dots + K_1$ and $C_4$ can be written as $P_2 + P_2$. We can how for example that a triangulation of a 2d-surface of 
positive genus is prime, as the only 2d-surfaces which can be factored are of the form $C_n + P_2$ which are all 
2-spheres which are prism graphs for $n \geq 2$.  \\

By looking at the numerical quantity of maximal dimension, we can see that every graph $G$ factors additively 
into primes $G=p_1 + p_2 \dots + p_k$, but we don't know yet whether the factorization is unique. 
[I could not get hold yet of the Zykov article \cite{Zykov}. The MathSciNet review of Tutte
mentions a unique prime factorization result there. ]
We have infinitely many prime graphs, like graphs $P_n$ without edges, circular graphs $C_n$ 
with $n>4$ or disconnected graphs. A spectral condition is given at the end of this article.
In some sense, our understanding of this arithmetic is on the level of Euclid, 
who did not prove the fundamental theorem of arithmetic yet. But unique prime factorization might not 
even hold in the sphere sub-monoid within that graph monoid. The sphere monoid might be the easier one to attack.
But already there, for each positive dimension, there are infinitely many prime spheres: the number of maximal
simplices of $G+H$ is the product of the number of simplices of $G$ and $H$.
A sphere with a prime number of maximal simplices therefore is an additive
prime in the sphere monoid. In general, the Euler generating functions
$f_G(x)= 1+\sum_{k=0} v_k x^{k+1}$ for the join satisfies $f_{G+H} = f_G f_H$, which has as a corollary
$\chi(G+H) = \chi(G) + \chi(H) - \chi(G) \chi(H)$ which follows from $\chi(G)=1-f_G(-1)$. This formula
implies that $i(G)=1-\chi(G)$ is multiplicative and 
that graphs with zero Euler characteristic, or graphs with even Euler characteristic and 
graphs with odd Euler characteristic all form sub-monoids. \\

At the end we point out that there is a multiplication on simplicial complexes which is compatible with the 
Zykov join addition in the sense that distributivity holds. 
If the Zykov join monoid is extended to become a group
we get so a commutative ring of simplicial complexes. This ring extends naturally the ring of integers as
it contains the ring of integers in the form of complete graphs: $K_n + K_m = K_{n+m}$ and $K_n K_m = K_{n m}$. 
The empty graph $0$ is the zero element and the graph $K_1$ is the $1$-element. We originally looked for a 
ring structure in order to proof the unimodularity theorem. A purely algebraic proof has not worked yet and 
might not even exist.  
Indeed, the Fredholm functional $\psi$ is only multiplicative on the additive subgroup of 
signed complexes which have even Euler characteristic. 
But $\psi$ is a character on the sphere subgroup of the additive group similarly as 
the Poincar\'e-Hopf functional $i(G) = 1-\chi(G)$ which is a character on the sphere group and which 
plays here an important role. 

\section{Simplicial complexes}

A finite abstract simplicial complex is a finite set $G$ of non-empty sets, called simplices, 
so that $G$ is invariant under the operation of taking non-empty subsets. 
The dimension of a simplex $x$ in $G$ is defined as the cardinality of $x$ minus $1$. 
In graph settings, the cardinality is called the clique number. 
The maximal dimension of $G$ is the maximal dimension, which a simplex in $G$ can have. 
An abstract simplicial complex is sometimes also called hereditary collection
\cite{RhodesSilva}. Early references using finite abstract simplicial 
complexes are \cite{Spanier,KahnSaksSturtevant} not using Euclidean realizations exclusively
as in \cite{Wallace1957}. \\

Having only the subset axiom, simplicial complexes are one of the simplest
geometrical structures imaginable, simpler even than algebraic structures, a topology or
measure theoretical structures. Indeed, an abstract simplicial complex is an
order structure on a set and every partial ordered set defines a simplicial complex, its
order complex. Abstract finite simplicial complexes are therefore closely related to 
finite posets. In the rest of the paper, we often just say ``complex" meaning finite abstract
simplicial complex. \\

The Barycentric refinement $G_1$ of a complex $G$ is the set of subsets of the simplex sets $A$
which have the property that for any pair $(a,b)$ in $A$, either $a$ is a subset of $b$ or
$b$ is a subset of $a$. This defines a graph $G_1=(V_1,E_1)$ where $V_1$ is the set of simplices
in $G$ and $E_1$ the set of pairs $(a,b)$ such that $a$ is contained in $b$ or $b$ is contained
in $b$. The Whitney complex of this graph is then the complex $G_1$. One can also look at
the connection complex $G'$ of $G$ which is the Whitney complex of the graph where
two simplices $(a,b)$ are connected, if one is contained in the other. It can be easier to work with
graphs $G_1$ and $G'$ however rather than the intrinsic simplicial complexes. \\

{\bf Remarks.} \\
{\bf 1)} There is a more sophisticated dimension of a complex: the inductive dimension
${\rm dim}(x)$ of a simplex $x$. It is defined as $1+{\rm dim}(S(x))$, where $S(x)$ is the 
Whitney complex in the unit sphere $S(x)$ of $x$ in $G_1$, and
${\rm dim}(G) = (1/|V(G_1)|) \sum_x {\rm dim}(x)$. Unlike the maximal dimension, the 
inductive dimension is a rational number in general. It has some nice properties like
satisfying the same dimension inequality than Hausdorff dimension or having computable 
expectation on Erd\H{o}s R\'enyi probability spaces \cite{randomgraph}. \\

{\bf 2)} The Barycentric refinement of $G$ can be written as a product $G \times K_1$
\cite{KnillKuenneth}:
this simplicial product graph $G \times H$ of two complexes is the 
graph for which the vertices are the ordered pairs $(x,y)$ of simplices $x$ in $G$ and $y$ in $H$ 
for which two $(a,b),(c,d)$ are connected if either $a \subset c, b \subset d$ or
$c \subset a, d \subset b$. Even if $G$ is a one-dimensional graph, it has no relation with the 
Cartesian product or tensor product for graphs which both are graphs with vertex set $V(G) \times V(G)$. The
simplicial product graph $G \times H$ has a vertex set the product of the simplex sets of $G$ and $H$.
The simplicial product graph can be rewritten as the product in the Stanley-Reisner ring. 
It is a natural product as the Euler characteristic $\chi(G \times H) = \chi(G) \times \chi(H)$ 
is multiplicative and the K\"unneth formula for cohomology applies \cite{KnillKuenneth}. \\

Given a simplicial complex $G$, we can look both at the Barycentric refinement $G_1$ of $G$
as well as the connection graph $G'$ of $G$. Both graphs will play a role here. 
The two graphs $G'$ and $G_1$ have the same vertex set as this is the set of simplices of $G$. The graph
$G_1$ is a subgraph of $G'$. In the theory of simplicial complexes one looks also at the Hasse diagram, 
which is a subgraph of $G_1$. \\

The unit spheres $S(x)$ in $G_1$ have topologies which 
are interesting. It turns out that the possible topologies which appear as unit spheres in $G_1$
form a combinatorial invariant of $G$. While $G_1$ can feature new unit sphere topologies, 
there are no new sphere topologies appearing in $G_2$.  \\

Note that we work with general simplicial complexes and do not assume any Euclidean structure. 
Still, there is intuition which leads to the invariance of the unit sphere topology. 
A Barycentric refinements can produce new sphere types but after having done one refinement, the
local sphere geometry does not change any more in the interior. The inside of refined simplices is now
Euclidean. The unit spheres in the interior are discrete spheres and space looks there like Euclidean
space. It is getting more interesting at singularities, where unit spheres are no topological spheres
any more. An example is the figure $8$ graph. 
Most points in that graph have a zero-dimensional sphere as a unit sphere but there is a point, the singularity, 
where a sufficiently small sphere consists of 4 points. 
The value of the Euler characteristic is $4$. It is a value does not change under Barycentric refinement. \\

\begin{figure}[!htpb]
\scalebox{0.35}{\includegraphics{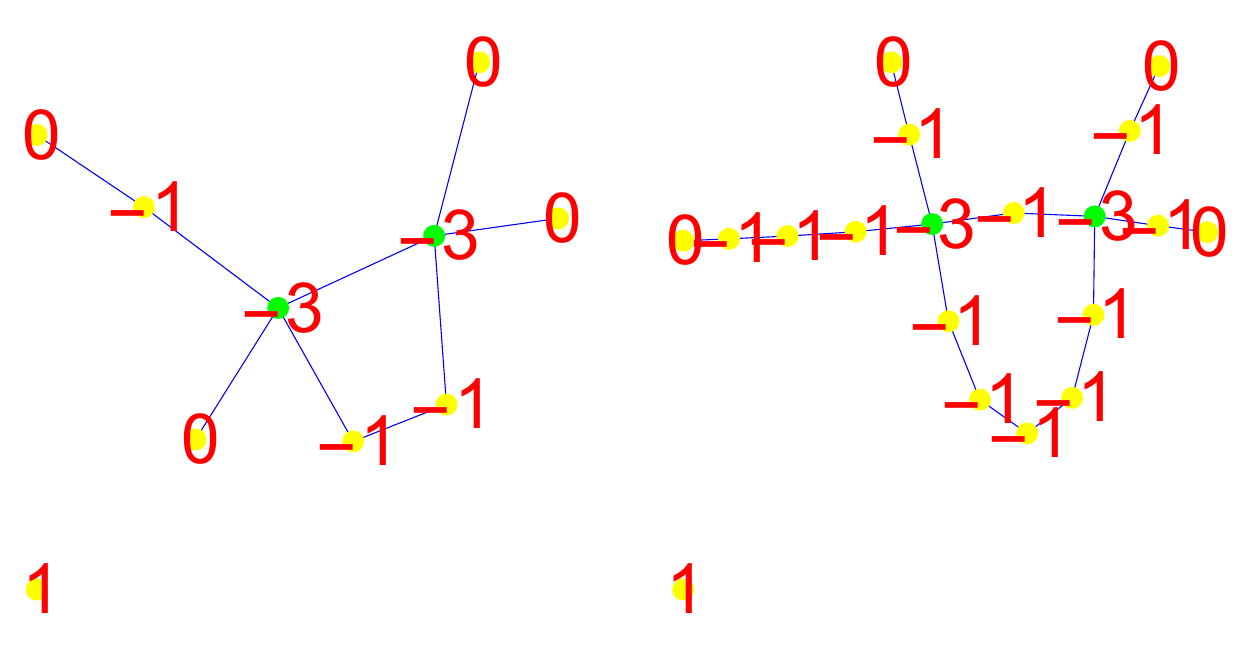}}
\scalebox{0.35}{\includegraphics{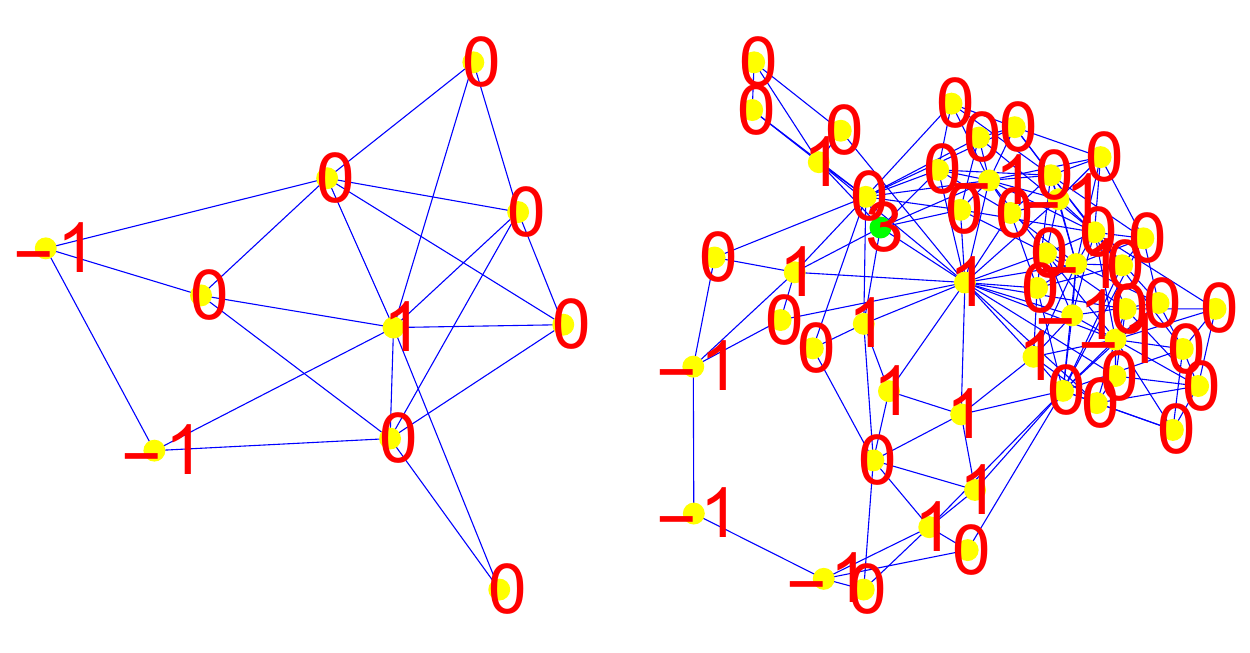}}
\scalebox{0.35}{\includegraphics{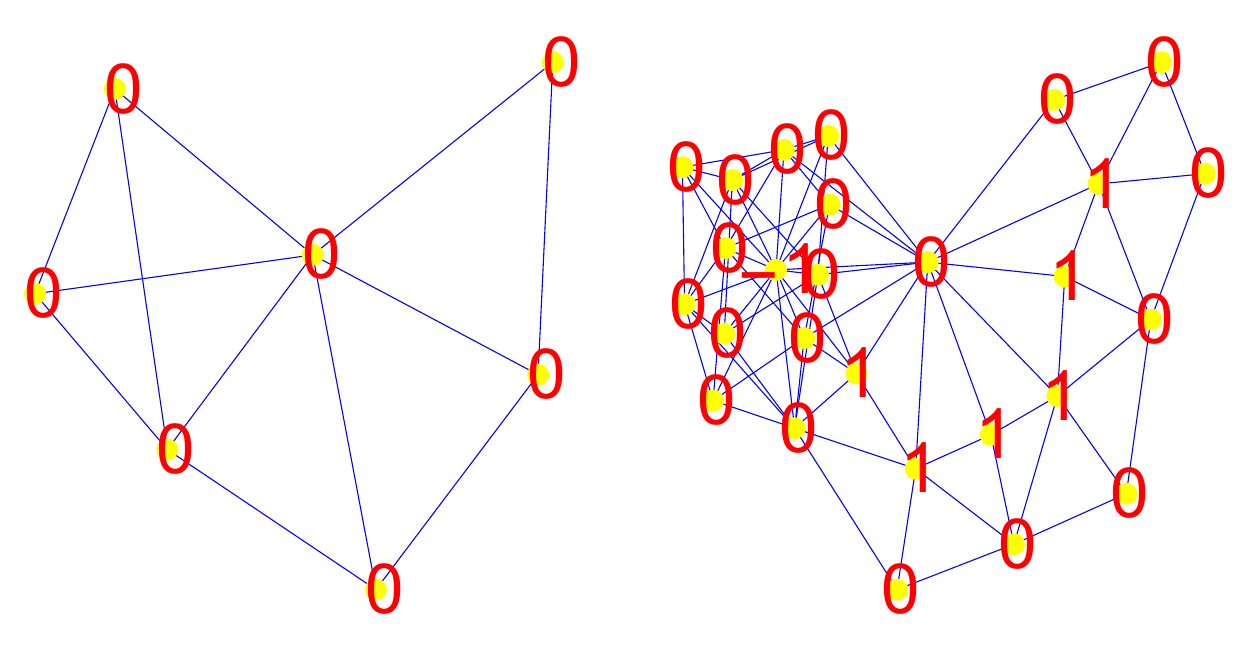}}
\scalebox{0.35}{\includegraphics{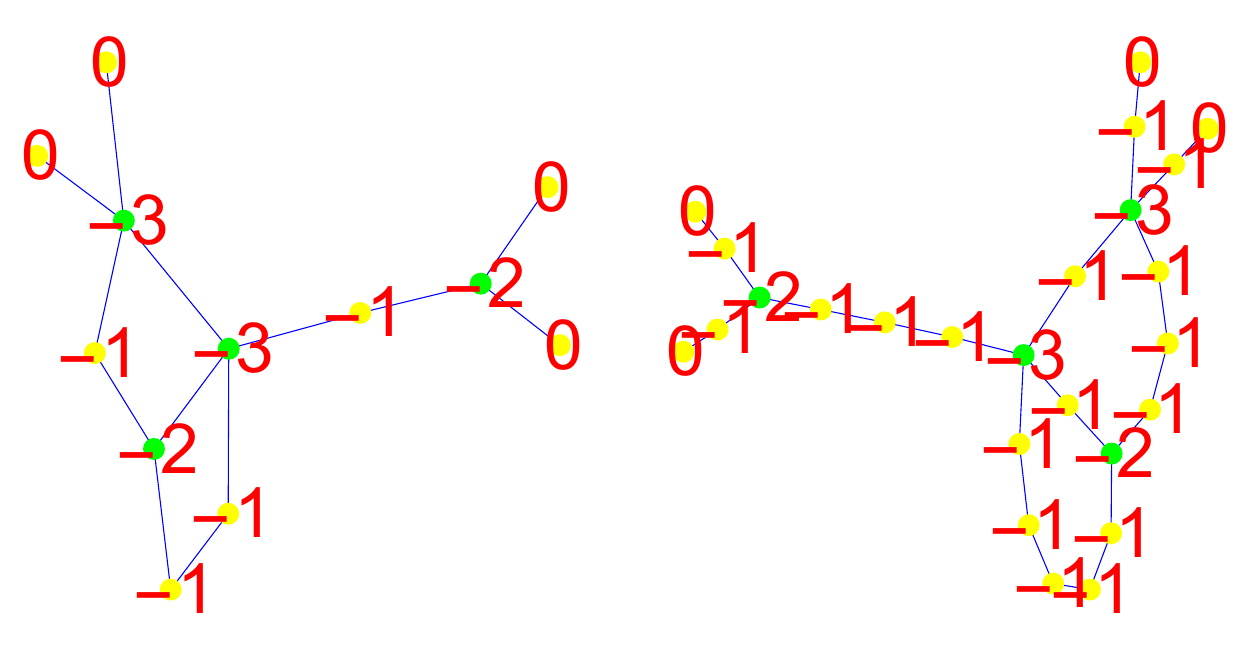}}
\scalebox{0.35}{\includegraphics{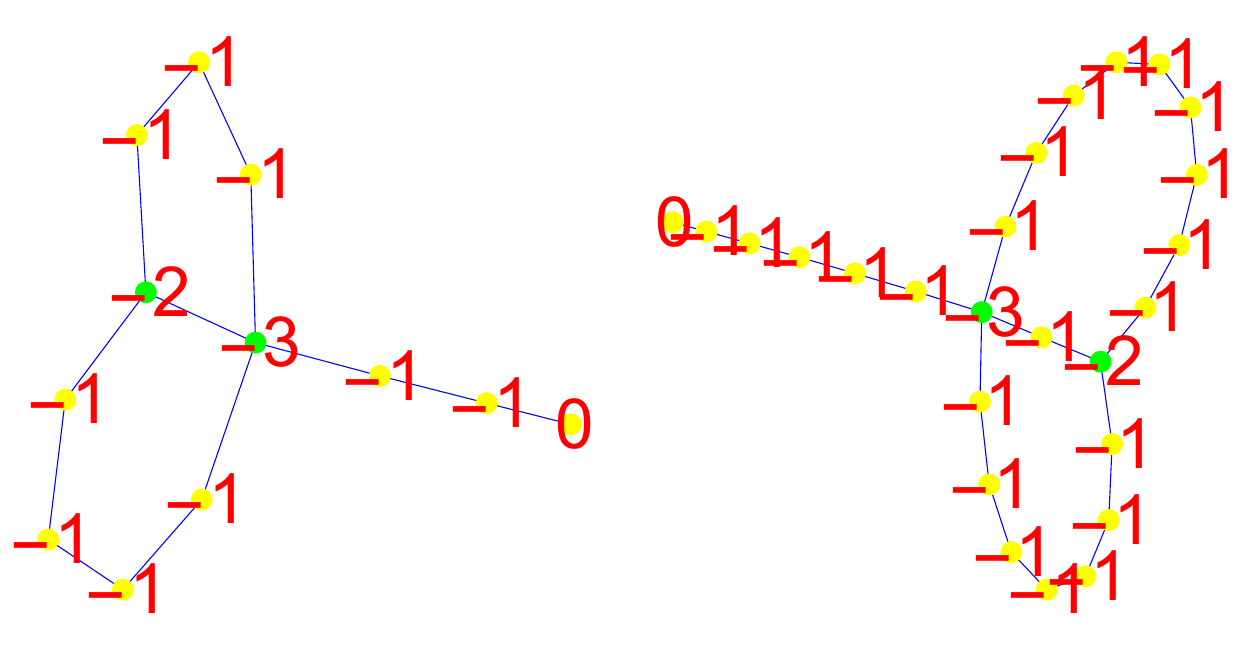}}
\scalebox{0.35}{\includegraphics{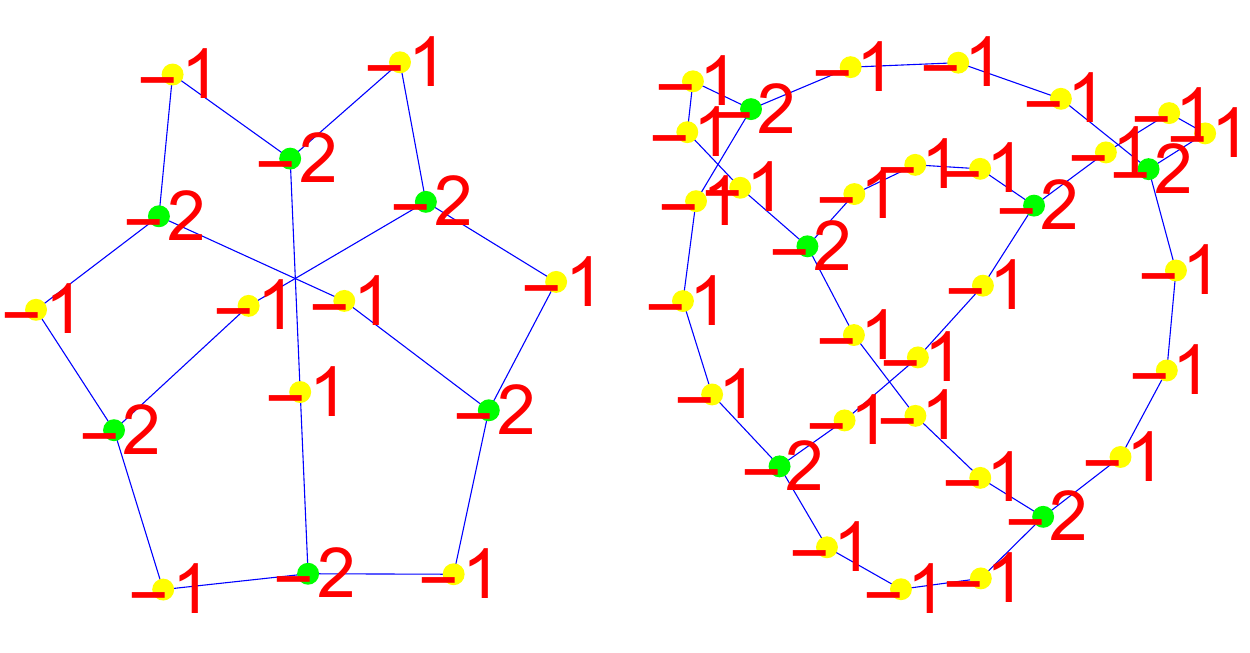}}
\caption{
Examples of graphs and their Barycentric refinement. 
In each case, we add the sphere spectral values at 
each vertex. 
\label{gallery}
}
\end{figure}

Let us call the number 
$$   i(x)=1-\chi(S(x)) $$ 
the Poincar\'e-Hopf index of the vertex $x$, where $S(x)$ is the unit sphere in the Barycentric refinement $G_1$
of the complex $G$. The name has been chosen because if $f$ is a function taking the 
maximum at $x$, then this is the usual Poincar\'e-Hopf index $i_f(x) = 1-\chi(S^-(x))$ \cite{poincarehopf},
where $S^-(x)$ is generated by the set of vertices $y$, where $f(y)<f(x)$. The name character 
is adequate since on the subgroup of complexes with $\chi(G) \neq 1$, 
it is a multiplicative character on a group constructed by the 
join operation on complexes. The set of possible values $\{ i(x) \; | \; x \in V(G) \}$ is
the Poincar\'e-Hopf spectrum of the graph. Since $i_{G + H}(x,y) = i_G(x) i_H(y)$, they form generalized
characters. The set of unit sphere topologies is the sphere topology spectrum. 
We can quantify this numerically for example by attaching to each 
vertex $x$ the Betti numbers $b_k(S(x))$ of the unit sphere.
The set $\{ b_k(S(x)) \; | \; x \in V(G_1) \}$ for example could be called the $k$'th 
Betti spectrum of the complex $G$. \\

The invariance of the sphere topology allows to check whether a graph 
is a Barycentric refinement of a complex.  If the unit sphere topology changes under Barycentric
refinement, then $G$ is not the Barycentric refinement of a complex. An example is the windmill graph
obtained by gluing three triangles along a common edge. While all unit spheres of $G$ are connected,
there is a vertex in $G_1$ with a disconnected unit sphere. 
We have seen before that if $\psi(G)$ is not in $\{-1,1\}$, then $G$ is not the connection
graph of a complex. An example is a triangle which has $\psi(G)=0$.

\section{Sphere topology}

In this section we look at discrete spheres and unit spheres $S(x)$ of Barycentric 
refinements of finite abstract simplicial complexes. Note that since we look at rather general
graphs, the unit spheres are rarely spheres but rather general graphs. \\

Barycentric refined graphs are natural as they are Eulerian \cite{knillgraphcoloring2}.
This means that they can be colored with a minimal number of colors, the color being the dimension
which the point has when it was a simplex of the complex. 
This is useful for graph chromatology \cite{knillgraphcoloring}: one approach to 4-color a planar
graph is is to embed it first into a 2-sphere, then fill out the interior 3-ball by cut it up using
edge refinements until it is Eulerian and so 4-colorable. Having colored the interior and not
cutting anything at the boundary, this colors the boundary with 4 colors. \\

The unit sphere $S(x)$ of a simplicial complex is a sub-complex consisting of all simplices in $G$ 
which are either contained in $x$ or which contain $x$. This is better seen in the Barycentric refinement
$G_1$, for which the simplices of $G$ are now vertices and the unit sphere is the geodesic unit sphere
with respect to the graph distance. \\

Given a simplex $x$, the set of subsets of $x$ is the complex $S^-(x)$, the stable or negative part of $S(x)$.
The set of simplices which contain $x$ define the sub-complex $S^+(x)$, the unstable or positive part of $S(x)$.

More intuitive is the graph theoretical reformulation in the graph $G_1$: the unit sphere $S(x)$ 
consists of all vertices in $G_1$ which are connected to $x$.  The function $f(x)={\rm dim}(x)$ 
which gives the dimension of $x$ when we look at it as a set in the simplicial complex $G$. 
It defines a coloring in $G_1$.  The dimension functional ${\rm dim}$ resembles a Morse function as 
it allows to partition the unit sphere $S(x)$ into a stable part $S^-(x)$ and unstable part 
$S^+(x)$. As in the continuum, the join of $S^-(x)$ and $S^+(x)$ is $S(x)$, this relation will also hold
here in full generality for any simplicial complex. The property $S^-(X) + S^+(x) = S(x)$ could serve
as a Morse condition without assuming $S^+(x)$ or $S(x)$ are spheres. But here, we don't make any such
assumption. \\

So here is again a formal definition for $S^{\pm}(x)$ in the case of the function $f(x)={\rm dim(x)}$: 
$$  S^-(x) = \{ y \in S(x) \; | \; f(y)<f(x) \} \;  $$
and
$$  S^+(x) = \{ y \in S(x) \; | \; f(y)>f(x) \} \; . $$

Despite the fact that the definitions for $S^-$ and $S^+$ look similar, there is a difference as we do not
have any kind of Poincar\'e duality in the case of a general complex. The stable sphere $S^-(x)$ is always 
a discrete sphere. It only deals with the inside of a simplex which after Barycentric refinement is of Euclidean structure.
The possibly crazy outside world, deals with connections of simplices which are not Euclidean even for very 
simple examples. For a one-dimensional star graph for example the central vertex $x$ is $0$-dimensional
and $S^+(x)$ consists of all edges containing $x$. \\

The Zykov join of two graphs $G,H$ is is the graph $G + H$ which has as vertex set 
the union of the vertices of $G$ and $H$ and where two vertices are either connected if 
they were already connected in $G$ or $H$ or then if they belong to different graphs. 
We simply call $G + H$ the join of $G$ and $H$. The join renders the category of graphs into
a monoid which can be augmented to an Abelian group on which functionals like 
$j(G)=-\psi(G)$ or $i(G)=1-\chi(G)$ are multiplicative. \\

A $d$-sphere or shortly $d$-sphere is a finite simple graph which has the property 
that every unit sphere is a $(d-1)$-sphere and such that removing a vertex from it renders the 
remaining graph contractible. This inductive definition gets started with the assumption that the empty 
graph $0$ (the zero element in the monoid) is the only $(-1)$-sphere. 
A $d$-graph is a finite simple graph for which all unit spheres are $(d-1)$ spheres.
A $d$-complex is a finite abstract simplicial complex for which its Barycentric refinement is a 
$d$-graph. \\

{\bf Remark}. The notion of Evako d-sphere and d-graph is equivalent to the notion of d-sphere within 
combinatorial d-manifolds in the theory of simplicial complexes. Notions as such have been put forward
in \cite{I94,Evako1994}. Related is the notion of a combinatorial d-manifold which is
a connected, pure (every facet has dimension d) finite abstract simplicial complex of maximal dimension $d$ 
for which each ridge (codimension-1-face) has two facets as a boundary and such that all vertex links are 
PL-homeomorphic to the boundary of a d-simplex. 
A combinatorial d-manifold is then a d-sphere if it is PL homeomorphic to the boundary of a d-simplex. 
These notions rely on Euclidean embddings. Forman's discrete Morse theory remains in the discrete:
a combinatorial d-sphere $G$ is PL-homeomorphic to a standard PL-sphere if and only if it is a Reeb sphere:
some Barycentric subdivision $G_n$ admits a discrete Morse function with exactly two critical points
\cite{forman98}.
Beeing a Reeb sphere is equivalent to being an Evako sphere (removing one vertex renders the complex 
contractible). Starting with the notion of Evako sphere has the advantage of avoiding Morse theory. 
In any case, detecting spheres can be difficult \cite{JoswigLutzTsuruga}.

We start with a lemma which is motivated by an analogue fact in the continuum, where a 
Morse function at a critical point $x$ defines two parts $S^{\pm}(x)$, the stable and unstable part
of the sphere. They are obtained by intersecting a 
small geodesic sphere $S(x)$ with the stable or unstable manifolds $W^{\pm}(x)$. 
The sphere $S(x)$ itself is then $S^- + S^+$. This is the picture for the gradient flow of a
Morse function $f$. In the discrete, in full generality, the dimension function $f(x) = {\rm dim}(x)$
can serve as a Morse function as we have a stable and unstable part at every point. As pointed
\cite{CountingAndCohomology}, the Morse cohomology in $G_1$ in this setting is directly equivalent to the 
simplicial cohomology of $G$. This gives hope for a more general Morse setting for functions with 
possibly much fewer critical points allowing to compute cohomology more efficiently than with
incidence matrices defined by the simplicial complex. 

\begin{lemma}[Unit sphere decomposition] 
If $G_1$ is the Barycentric refinement of a finite abstract simplicial complex $G$, then
for any vertex $x$ in $G_1$, the unit sphere $S(x)$ is the join of $S^-(x)$ and $S^+(x)$. 
\end{lemma}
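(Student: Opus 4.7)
The plan is to verify the join structure directly from the definition of $G_1$ and the partial order of inclusion. Recall that a vertex $x$ of $G_1$ is a simplex of $G$, and two vertices $y,z$ are adjacent in $G_1$ iff one strictly contains the other. Thus the unit sphere $S(x)$ consists exactly of those simplices $y \neq x$ with $y \subsetneq x$ or $x \subsetneq y$. With $f(y) = \dim(y) = |y|-1$, the definitions of $S^-(x)$ and $S^+(x)$ become: $S^-(x) = \{ y : y \subsetneq x \}$ and $S^+(x) = \{ y : x \subsetneq y \}$, where all such $y$ are required to lie in $G$.

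First I would check that the vertex sets partition correctly. Any $y \in S(x)$ is adjacent to $x$ in $G_1$, hence is in one of the two inclusion relations with $x$. These relations are mutually exclusive since $y \neq x$ and inclusion is antisymmetric. Hence $V(S(x)) = V(S^-(x)) \sqcup V(S^+(x))$, which is the vertex-set requirement for the join $S^-(x) + S^+(x)$.

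Next, the induced subgraphs on $V(S^-(x))$ and on $V(S^+(x))$ coming from $G_1$ are, by definition, exactly the graphs $S^-(x)$ and $S^+(x)$ (each is a sub-complex of $G$, viewed through $G_1$). So the only thing left to verify is the crucial crosswise condition of the Zykov join: every $y \in S^-(x)$ is adjacent in $G_1$ to every $z \in S^+(x)$. But if $y \subsetneq x$ and $x \subsetneq z$, then transitivity of set inclusion gives $y \subsetneq z$, which is exactly the edge relation in $G_1$. Hence all cross pairs are edges, matching the remaining requirement for the join.

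Combining these three observations, $S(x)$ and $S^-(x) + S^+(x)$ have the same vertex set and the same edge set, so $S(x) = S^-(x) + S^+(x)$. There is no real obstacle here: the statement is essentially a repackaging of the transitivity and antisymmetry of the inclusion order on simplices of $G$, made visible by passing to the Barycentric refinement $G_1$. The only point one must be careful about is that $S^-(x)$ and $S^+(x)$ are taken inside $G_1$ (so their induced edges are exactly inclusions of simplices of $G$), which is automatic from the construction of $G_1$ as the Whitney complex of the inclusion order.
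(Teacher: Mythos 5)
Your proof is correct and follows essentially the same route as the paper's: identify the vertex set of $S(x)$ as the disjoint union of the vertex sets of $S^-(x)$ and $S^+(x)$, and then use transitivity of inclusion ($y\subsetneq x\subsetneq z$ implies $y\subsetneq z$) to get all cross edges required for the Zykov join. Your version is slightly more careful than the paper's (you note the disjointness via antisymmetry and check that the induced edges within each part agree), but the core argument is identical.
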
 
\begin{proof}
Both $S^{\pm}(x)$ are subgraphs of the unit sphere $S(x)$. 
The union of the vertex sets of $S^-(x)$ and $S^+(x)$ is the vertex set of $S(x)$. 
If $a \in S^-(x)$ and $b \in S^+(x)$, then $(a,b)$ is an edge in $S(x)$ because as simplices,
$a$ is a sub simplex of $x$ and $x$ is a sub simplex of $b$. 
\end{proof} 

We can rephrase that also by saying that the unit sphere of a vertex $x$ for which the simplex 
$x$ is not prime as a complex in $G$ (which is always the case if the dimension is positive) 
is not prime in $G_1$ but splits as $S(x) = S^-(x) + S^+(x)$.  \\

Lets now look at the structure of the stable unit sphere:

\begin{lemma}
The stable unit sphere $S^-(x)$ of a vertex $x$ in the 
Barycentric refinement $G_1$ of an abstract simplicial complex
is always a $d$-sphere for $d={\rm dim}(x)-1$. 
\end{lemma}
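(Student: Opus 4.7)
The lemma reduces to the claim that the Barycentric refinement of the boundary of a $d$-simplex is an Evako $(d-1)$-sphere: $S^-(x)$ has as vertices the proper non-empty subsets of $x$, with edges given by strict inclusion, which is precisely the Barycentric refinement of $\partial x$. I will induct on $d = \dim(x)$, with base case $d = 0$ where $\partial x$ is empty and $S^-(x)$ is the empty graph $\,-\,$ the $(-1)$-sphere by convention.

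For the inductive step, to check that unit spheres in $S^-(x)$ are $(d-2)$-spheres, I fix a vertex $y$ of $S^-(x)$ with $\emptyset \neq y \subsetneq x$. Applying the Unit Sphere Decomposition Lemma in $G_1$ and restricting to $S^-(x)$ identifies the unit sphere of $y$ in $S^-(x)$ as the join of $\{z : \emptyset \neq z \subsetneq y\}$ and $\{z : y \subsetneq z \subsetneq x\}$. The first factor is $S^-(y) = \mathrm{sd}(\partial y)$, a $(|y|-2)$-sphere by the inductive hypothesis. The second factor, via the bijection $z \mapsto z \setminus y$, is isomorphic as a graph to $S^-(x \setminus y)$, hence a $(|x \setminus y|-2)$-sphere by induction. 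Since the join of an $a$-sphere and a $b$-sphere is an $(a+b+1)$-sphere, the unit sphere of $y$ is a $(d-2)$-sphere, as required.

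For the contractibility condition $\,-\,$ that $S^-(x) \setminus \{y\}$ is contractible $\,-\,$ my strategy is to choose any $v \in x \setminus y$ (available because $y \neq x$) and to retract $S^-(x) \setminus \{y\}$ onto the subgraph $U$ of vertices containing $v$ via the map $z \mapsto z \cup \{v\}$. The subgraph $U$ has $\{v\}$ as a universal vertex (the condition $v \notin y$ ensures $\{v\} \neq y$), so $U$ is a cone and hence contractible. This step is the real technical hurdle: while the cone structure around $\{v\}$ intuitively explains why the complement of $y$ contracts, making this into a rigorous combinatorial homotopy requires careful handling of exceptional vertices like $z = x \setminus \{v\}$, for which $z \cup \{v\} = x$ leaves $S^-(x)$. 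I expect to finish the argument either by a sequence of elementary dismantlings that absorb such boundary vertices before the main retraction, or by constructing a discrete Morse matching on the order complex of $\partial x \setminus \{y\}$ with a single critical cell. This bookkeeping $\,-\,$ rather than any new geometric idea $\,-\,$ is where the main effort of the proof lies.
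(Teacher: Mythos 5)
Your overall route is the same as the paper's: identify $S^-(x)$ with the Barycentric refinement of the boundary of the simplex $x$ and verify the two Evako sphere conditions by induction on dimension. On the first condition you are actually more precise than the paper, which only remarks that intersecting with a unit sphere ``reduces the dimension by one''; your explicit description of the unit sphere of $y$ inside $S^-(x)$ as the join of $S^-(y)$ with a copy of $S^-(x\setminus y)$ via $z\mapsto z\setminus y$, combined with the fact that the join of an $a$-sphere and a $b$-sphere is an $(a+b+1)$-sphere, is a clean and correct way to make that step rigorous, and the count $(|y|-2)+(|x|-|y|-2)+1=d-2$ checks out.

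The genuine gap is the contractibility of $S^-(x)\setminus\{y\}$, which you explicitly leave open. (To be fair, the paper's own sentence on this point is little more than an assertion, so you are not missing an idea the paper supplies.) Your plan does close, and only one exceptional vertex needs absorbing. Pick $v\in x\setminus y$. If $y=x\setminus\{v\}$, then $z\cup\{v\}=x$ would force $z=y$, which is excluded, so the map $z\mapsto z\cup\{v\}$ never leaves $S^-(x)\setminus\{y\}$ and no preliminary step is needed. Otherwise $y\subsetneq x\setminus\{v\}$ (because $v\notin y$), so the link of the single bad vertex $x\setminus\{v\}$ in $S^-(x)\setminus\{y\}$ equals $S^-(x\setminus\{v\})\setminus\{y\}$, which is contractible by the inductive hypothesis applied to the smaller simplex $x\setminus\{v\}$; remove that vertex first. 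Then delete the vertices $z$ with $v\notin z$ in decreasing order of cardinality: when $z$ is maximal among the remaining such vertices, it is dominated by $z\cup\{v\}$, since its lower neighbours are contained in $z\cup\{v\}$ and its surviving upper neighbours all contain $v$ and hence contain $z\cup\{v\}$; a dominated vertex has a cone as its link and its removal preserves contractibility. What remains is $\{z:\; v\in z\subsetneq x\}$, a cone over the vertex $\{v\}$. Spelling out this ordering is exactly the bookkeeping you anticipated; without it the proposal is a correct plan rather than a complete proof.
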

\begin{proof}
$S^-(x)$ is the boundary of the Barycentric refinement of the 
simplex $X$ in $G$ which belongs to the vertex $x$ in $G_1$. 
We can show by induction in the dimension $d$ that this is a sphere.
When looking at the intersection with a sphere $S(y)$ of a vertex $y$
in $S^-(x)$ the dimension is reduced by $1$, so that by induction, 
the unit sphere of a vertex $y \in S^-(x)$ is a $(d-1)$-sphere. Also
the second condition for a $d$-sphere, the collapsibility is satisfied
by induction. Take away a vertex $y$ of $S^-(x)$. Then every intersection
of $S^-(x)$ with an other sphere $S(z)$ which has contained $y$ becomes
contractible so that $S^-(x) \setminus y$ is contractible. 
\end{proof}

{\bf Examples}: \\
{\bf 1)} If $x$ belongs to a $0$-dimensional vertex $\{x_0\}$ in $G$, 
then $S^-(x)$ is empty so that it is a $d=(-1)$-dimensional sphere. \\
{\bf 2)} If $x$ belongs to a $1$-dimensional simplex $(x_0,x_1)$ in $G$,
then $S^-(x)$ consists of two isolated points $\{ x_0, x_1 \}$ which is 
$d=0$-dimensional sphere.  \\
{\bf 3)} If $x$ belongs to a two-dimensional simplex $(x_0,x_1,x_2)$ in $G$,
then $S^-(x)$ is a cyclic graph $C_6$ with $6$ elements:  \\
$\{x_0,x_0x_1,x_1,x_1 x_2,x_2,x_2 x_3,x_3,x_3 x_4,x_4,x_4 x_0 \}$.  \\

We will need a generalization of this which we can see as a higher 
order version of the just said. Given a vertex $x$ in the second
Barycentric refinement $G_2$, this corresponds to a simplex $(x_0, x_1, \dots, x_d)$
in $G_1$. We can assume without loss of generality that the vertices are
ordered so that $x_0 \subset x_1 \subset \cdots \subset x_k)$, where $x \subset y$
means that as a simplex in $G$, the simplex $x$ is contained in the simplex $y$. 
Now define the second stable sphere
$$ S^-(x) = \{ z \subset x \; | \; z \neq x_0, z \neq x_1 \dots, z \neq x_k \} \; . $$
These second stable spheres are of interest because they appear as unit spheres in 
the interior of the second Barycentric refinement $G_2$ of $G$. 

\begin{lemma}
The second stable unit sphere $S^-(x)$ of a vertex $x=(x_0, \dots, x_k)$ 
in the Barycentric refinement $G_2$ of an abstract simplicial complex $G$
is always a $(l-k)$-sphere if ${\rm dim}(x_k)=l$. 
\end{lemma}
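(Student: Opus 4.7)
I would prove the lemma by induction on the flag length $k+1$, using the Unit Sphere Decomposition Lemma and the sphere join formula $S^p+S^q=S^{p+q+1}$. For the base case $k=0$, the flag is $x=(x_0)$ with $\dim(x_0)=l$, and the claim is that $S^-(x)$ is an $l$-sphere --- one dimension larger than what the previous lemma gives for the same $x_0$ sitting in $G_1$. I would show this extra dimension arises from the second Barycentric refinement by exhibiting $S^-(x)$ as the suspension (join with $S^0$) of the first stable sphere $S^-_{G_1}(x_0)=S^{l-1}$, giving $S^{l-1}+S^0=S^l$; the two new points come from the two ``successor'' directions made available at $(x_0)\in V(G_2)$ that did not exist for $x_0\in V(G_1)$.

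For the inductive step, set $x'=(x_0,\ldots,x_{k-1})$ and $l'=\dim(x_{k-1})$, so that by induction $S^-(x')$ is an $(l'-(k-1))$-sphere. The heart of the argument is the Zykov join decomposition
\[ S^-(x) \;=\; S^-(x') \;+\; B_k, \qquad B_k\;=\;\{\,z\in G:\; x_{k-1}\subsetneq z\subsetneq x_k\,\}, \]
where $B_k$ captures the ``gap'' introduced by the new step $x_{k-1}\subsetneq x_k$. As a graph, $B_k$ is isomorphic to the boundary of the Barycentric refinement of the simplex with vertex set $x_k\setminus x_{k-1}$, which has dimension $l-l'-1$; the previous lemma therefore identifies $B_k$ as an $(l-l'-2)$-sphere. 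The join formula then yields
\[ \dim S^-(x) \;=\; (l'-k+1)+(l-l'-2)+1 \;=\; l-k, \]
closing the induction.

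The main obstacle is justifying the join decomposition rigorously. Two things must be verified: (i) the vertex sets of $S^-(x')$ and $B_k$ partition the vertex set of $S^-(x)$, which comes from splitting the simplices appearing in the defining set according to whether they strictly contain $x_{k-1}$; and (ii) every vertex of $S^-(x')$ is $G_1$-adjacent to every vertex of $B_k$. Item (ii) is the delicate combinatorial step: I would show that any $z'$ contributing to $S^-(x')$ is always inclusion-comparable in $G$ with any $z$ satisfying $x_{k-1}\subsetneq z\subsetneq x_k$. This holds because such a $z'$ is either contained in $x_{k-1}$ (forcing $z'\subseteq x_{k-1}\subsetneq z$) or strictly contains some $x_i$ with $i\le k-1$, in which case the chain structure of $G$ inside $x_k$ forces $z'$ and $z$ to be comparable. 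Once (i) and (ii) are in place, the join structure is automatic and the dimension count above finishes the argument.
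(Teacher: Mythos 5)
Your inductive step is essentially sound, and it is in substance the paper's own argument in disguise: peeling the top element off the flag one step at a time, your decomposition $S^-(x)=S^-(x')+B_k$ telescopes into the paper's join of ``gap spheres'' $S^-(x)=S_{[a_1,b_1]}+\cdots+S_{[a_m,b_m]}$, one factor per gap in the dimension sequence $\dim(x_0)<\cdots<\dim(x_k)$. Your identification of $B_k=\{z: x_{k-1}\subsetneq z\subsetneq x_k\}$ with the boundary of the Barycentric refinement of the simplex on $x_k\setminus x_{k-1}$, hence an $(l-l'-2)$-sphere, is correct, and your points (i) and (ii) are exactly what is needed to make the join decomposition rigorous (indeed (ii) is easier than you make it: every $z'$ counted by $S^-(x')$ satisfies $z'\subsetneq x_{k-1}\subsetneq z$, so comparability is immediate). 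The paper also offers a second route you do not use, realizing $S^-(x)$ as $S^-(x_k)\cap S(x_0)\cap\cdots\cap S(x_{k-1})$ with each intersection dropping the dimension by one.

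The genuine gap is the base case, and it cannot be repaired as you propose. For $k=0$ the second stable sphere of the singleton flag $(x_0)$ is exactly the same set of simplices of $G$ as the first stable sphere of the vertex $x_0$ in $G_1$, namely the proper nonempty faces of $x_0$; this is the boundary of the Barycentric refinement of an $l$-simplex, an $(l-1)$-sphere, not an $l$-sphere. The ``two new successor directions'' you invoke to manufacture a suspension do not exist: anything above $x_0$ belongs to $S^+$, not to $S^-$, and the second refinement creates no new vertices below $x_0$. With the honest base case your own telescoping gives
$(\dim x_0-1)+\sum_{j=1}^{k}\bigl[(\dim x_j-\dim x_{j-1}-2)+1\bigr]=l-k-1$,
and this is what the paper's own worked example confirms: for the flag (vertex, tetrahedron) in $K_4$ one has $l=3$, $k=1$, and the answer is the hexagon $C_6$, a $1$-sphere, not a $2$-sphere. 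In other words, the dimension in the statement appears to be off by one, and your proof ``closes'' only because the false base case compensates for that discrepancy. The fix is to prove the base case as an $(l-1)$-sphere and carry $l-k-1$ through the induction (or, equivalently, adjust the statement), rather than to insert a suspension that is not there.
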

\begin{proof}
We can see this in two different ways: 
first we can see it as an intersection of $k$ 
unit spheres $S(x_j)$ in the stable sphere $S^-(x_k)$. Each intersection
reduces the dimension by $1$. \\
If we mark the set of dimensions $k_j = {\rm dim}(x_j)$ in the 
integer interval $\{0, \dots, l \}$, then the set of simplices which 
belong to a gap of dimension values $[a,b]$ which are omitted, 
then this is a sphere of dimension $b-a-1$. We call this the gap sphere
belonging to $[a,b]$. The second stable unit sphere is now the join
of all these gap spheres:
$$ S^-(x) = S_{[a_1,b_1]} + \cdots + S_{[a_m,b_m]}  \; . $$
\end{proof}

{\bf Examples}: \\
{\bf 1)} The set of simplices in the complex of all 
complete subgraphs of $K_4$ which are between a vertex $p$ and the 
tetrahedron $x$ itself can be seen as the virtual stable sphere
$S^-(\{x,p\}) = S(x) \cap S(p)$ which is a stable sphere in the
Barycentric refinement. These are all the edges and triangles which 
contain $p$ are contained in $x$. This is a cyclic graph $S_6$. In 
this case, there is only one gap $[a,b] = [1,2]$. \\
{\bf 2)} The set of simplices in the complex of all
complete subgraphs of $K_5$ which are different from a fixed 
vertex $p$ and a fixed triangle $t$. In this case there are two 
gaps $[a_1,b_1] = [1,1]$ and $[a_2,b_2] = [3,3]$. The two 
gap spheres are $0$-dimensional spheres $P_2$ and their join is 
$C_4 = P_2 + P_2$. Again this can be seen as a stable sphere $S^-(\{x,p\})$
of a refinement, where the edge $\{x,p\}$ has become a vertex. 

\section{The sphere group}

In the continuum the monoid of spheres with join operation
is not that interesting as two spheres of the same
dimension are topologically equivalent. The map ${\rm dim}$ from 
$({\rm Topological spheres},+,0)$ to $\mathbb{N}$ is an isomorphism
of monoids. \\

This changes completely in a combinatorial setting. The sphere group is 
now interesting as there is a countable set of non-isomorophic spheres of each 
positive dimension. First of all we have to establish that the property of 
being a sphere  is invariant under addition given by the Zykov join operation.  \\

The join operation has the same properties as in the continuum:
it is associative, commutative and  preserves spheres. For associativity, note that
the graph $A  +  B  +  C$ has the union of the vertex sets as vertices. 
Two vertices $x,y$ are connected if they were connected in one of the three
components $A,B,C$ or then are in different components. 
The join of two simplices of dimension $n$ and $m$ is a simplex of dimension $n+m+1$: 
shortly $K_n+K_m=K_{n+m}$. The join of two spheres is again a sphere:

\begin{lemma}[Joins preserve Spheres]
The set $({\rm Spheres},+)$ is a monoid, a groupoid with a neutral element 
given by the $(-1)$-sphere $0$. The join of $n$-sphere and a $m$-sphere is a $n+m+1$-sphere.
\end{lemma}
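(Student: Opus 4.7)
The plan is to verify the monoid axioms directly from the definition of the Zykov join, and then prove the sphere-preservation claim by double induction on $n+m$, with the base cases $n=-1$ or $m=-1$ being trivial since the empty graph is the identity for $+$.

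For the inductive step, assume both $G$ (an $n$-sphere, $n \geq 0$) and $H$ (an $m$-sphere, $m \geq 0$) satisfy the claim for all smaller dimension sums. I would first establish the key identity $S_{G+H}(x) = S_G(x) + H$ for $x \in V(G)$ (and symmetrically for $y \in V(H)$), which follows immediately from the join's edge rule: the neighbors of $x$ in $G+H$ are its $G$-neighbors together with every vertex of $H$. Since $S_G(x)$ is an $(n-1)$-sphere by hypothesis, the induction hypothesis applied to $(n-1)+m < n+m$ gives that $S_G(x)+H$ is an $((n-1)+m+1) = (n+m)$-sphere, which is exactly the required dimension of a unit sphere in a prospective $(n+m+1)$-sphere. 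This handles the first clause of the sphere definition.

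For the contractibility clause, I need to show $(G+H)\setminus\{x\}$ is contractible for any vertex $x$, say $x \in V(G)$. Observe $(G+H)\setminus\{x\} = (G\setminus\{x\}) + H$, and $G\setminus\{x\}$ is contractible since $G$ is an $n$-sphere. So the task reduces to an auxiliary lemma: the join of a contractible graph $A$ with any graph $B$ is contractible. I would prove this by induction on $|V(A)|$. If $A$ is a single vertex, then $A+B$ is a cone, and cones are contractible by another short induction (pick any $v$ in $B$; its unit sphere in $K_1+B$ is $K_1 + S_B(v)$, again a cone, so contractible by induction, and $(K_1+B)\setminus\{v\} = K_1 + (B\setminus\{v\})$ is again a cone). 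For the inductive step, contractibility of $A$ supplies a vertex $v$ with $S_A(v)$ and $A\setminus\{v\}$ both contractible; then in $A+B$ the unit sphere of $v$ equals $S_A(v)+B$ and the deletion equals $(A\setminus\{v\})+B$, both contractible by the induction hypothesis, so the Evako reduction at $v$ remains valid in the join.

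The main obstacle is precisely this auxiliary lemma on cones and joins with contractible graphs: the sphere axiom is inductive on dimension but contractibility is inductive on vertex count, so the two inductions must be nested cleanly and one must check that the "remove a vertex whose unit sphere is contractible" reduction behaves well under $+$. Once that lemma is in hand, associativity and commutativity of $+$ are straightforward from the symmetric definition of the vertex and edge sets, the empty graph $0$ is the identity because adjoining no vertices and no cross-edges leaves a graph unchanged, and the dimension bookkeeping $n+m+1$ matches the $S_G(x)+H$ computation above, completing the proof.
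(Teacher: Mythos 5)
Your proposal is correct and follows essentially the same route as the paper: induction on $n+m$, the identity $S_{G+H}(x)=S_G(x)+H$, and the reduction of the vertex-removal condition to the fact that the join of a contractible graph with any graph is contractible. The only difference is that you actually prove this last auxiliary fact (via the nested induction on $|V(A)|$ and the cone case), which the paper simply asserts, so your write-up is a more complete version of the same argument.
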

\begin{proof}
Start that the join of two $(-1)$-spheres is again a $-1$ sphere.
Then use induction with respect to the sum $n+m$ of the dimensions.
We have to show that every unit sphere of $G + H$ is a sphere and
that removing a vertex of $G + H$ renders the remaining graph
contractible. Given a vertex $x$ in $G + H$. Assume it is in $G$.
Its unit sphere is the join of $S_G(x) + H$ which by induction assumption
is a sphere. The case when $x$ is in $H$, is analog.
If we remove the vertex $x$ in $G$, then by definition the graph $G \setminus x$
is contractible because the join of a contractible graph with any other unit
sphere graph is contractible.
\end{proof}

The Grothendieck construction produces then a group from this monoid. Lets call
it the sphere group. Elements in this groups are equivalence classes of
pairs $A-B$ of spheres, where $A-B \sim C-D$ if there exists a sphere $K$
such that $A+D+K = B+C+K$. The group element $O  - C_4$ with octahedron $O$ for example is $P_2$.
One can see this also because the octahedron is $O=3 P_2$ and the cyclic graph
$C_4$ is $2 P_2$ so that $3 P_2 - 2 P_2 = P_2$.  \\

A $d$-graph is a graph for which every unit sphere is a 
$(d-1)$-sphere. A $d$-complex is a finite abstract 
simplicial complex $G$ for which its Barycentric refinement 
$G_1$ is a $d$-graph. 

\begin{coro}[Geometric invariance]
The Barycentric refinement $G_1$ of a $d$-complex $G$ is a $d$-complex.
\end{coro}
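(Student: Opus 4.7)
I would analyze the unit sphere of an arbitrary vertex of $G_2$ via a join decomposition along the gaps of its associated chain. A vertex $x \in V(G_2)$ is a simplex of $G_1$, i.e., a chain $x_0 \subsetneq x_1 \subsetneq \cdots \subsetneq x_k$ of simplices of $G$, and the aim is to show that $S(x)$ is a $(d-1)$-sphere. The Unit Sphere Decomposition Lemma applied to the refinement $G_1 \to G_2$ gives $S(x) = S^-(x) + S^+(x)$, and Lemma 2 applied to the same refinement immediately identifies $S^-(x)$ as the $(k-1)$-sphere which is the Barycentric refinement of the boundary of the $k$-simplex $\{x_0,\dots,x_k\}$.

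The main content is decomposing $S^+(x)$. Its vertices are simplices of $G_1$ strictly containing $\{x_0,\dots,x_k\}$, i.e., chains obtained by adding one or more simplices $z \in G$; each such $z$ sits in one of the $k+2$ gaps
$$ z \subsetneq x_0, \quad x_i \subsetneq z \subsetneq x_{i+1} \; (0 \le i \le k-1), \quad z \supsetneq x_k. $$
Because any two extras from distinct gaps are automatically comparable via the intervening $x_j$'s, $S^+(x)$ factors as a Zykov join $L_{-1} + L_0 + \cdots + L_{k-1} + L_k$, where $L_i$ is the order complex of simplices in the $i$-th gap. The interior pieces are straightforward: $L_{-1}$ is the Barycentric refinement of the boundary of $x_0$, a $(\dim x_0 - 1)$-sphere, and for $0 \le i \le k-1$ the bijection $z \mapsto z \setminus x_i$ identifies $L_i$ with the Barycentric refinement of the boundary of $x_{i+1}\setminus x_i$, a $(\dim x_{i+1} - \dim x_i - 2)$-sphere. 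The remaining piece $L_k$ is exactly $S^+_{G_1}(x_k)$.

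Here the hypothesis enters. Since $G_1$ is a $d$-graph, $S_{G_1}(x_k)$ is a $(d-1)$-sphere, and Lemma 1 yields $S_{G_1}(x_k) = S^-_{G_1}(x_k) + L_k$ with $S^-_{G_1}(x_k)$ a $(\dim x_k - 1)$-sphere by Lemma 2. I would conclude that $L_k$ is a $(d - \dim x_k - 1)$-sphere by a sphere-cancellation principle: if $A$ and $A+B$ are Evako spheres, so is $B$. Granting this, every factor of the join is a sphere, so by the Joins-preserve-Spheres lemma $S(x)$ is a sphere; a dimension count using that an $m$-fold join of spheres of dimensions $e_1,\dots,e_m$ has dimension $\sum e_j + (m-1)$ gives
$$(k-1) + (\dim x_0 - 1) + \sum_{i=0}^{k-1}(\dim x_{i+1} - \dim x_i - 2) + (d - \dim x_k - 1) + (k+2) \;=\; d - 1,$$
closing the argument.

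The principal obstacle is the sphere-cancellation step, which is not stated in the excerpt. I would prove it by induction on $\dim(A+B)$: for the unit-sphere axiom of $B$, pick $v \in A$ and apply the inductive hypothesis to $S_{A+B}(v) = S_A(v) + B$; for the vertex-removal-contractibility axiom, combine $(A+B)\setminus w = A + (B \setminus w)$ with the parallel claim that joining with a nonempty sphere reflects contractibility, established by the same kind of induction using $S_{A+C}(v) = S_A(v) + C$. Everything outside this cancellation step is structural bookkeeping with simplex dimensions and join arithmetic.
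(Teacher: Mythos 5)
Your strategy differs from the paper's in an interesting way: the paper disposes of $S^+(x)$ in one stroke via the positive sphere lemma, writing $S^+(x)=\bigcap_j S(x_j)$ and reading this intersection as an iterated unit sphere inside the $(d-1)$-spheres guaranteed by the hypothesis; you instead split $S^+(x)$ along the gaps of the chain and isolate the hypothesis in the single factor $L_k=S^+_{G_1}(x_k)$, extracted by cancellation. The gap decomposition and the dimension count are attractive and the bookkeeping checks out, but there is a concrete error in the decomposition step. The vertices of $S^+_{G_2}(x)$ are the chains of $G_1$ properly containing $x$, not the individual simplices $z$ lying in the gaps; what is actually true is $S^+_{G_2}(x)=\bigl(L_{-1}+\cdots+L_k\bigr)_1$, the \emph{Barycentric refinement} of the join of the gap complexes, and the paper explicitly warns that refinement does not commute with the join ($(H+K)_1\neq H_1+K_1$). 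To get from ``$L_{-1}+\cdots+L_k$ is a sphere'' to ``$S^+_{G_2}(x)$ is a sphere'' you need that Barycentric refinement preserves spheres in dimensions below $d$ --- which is essentially an instance of the statement being proved, so the whole argument must be restructured as a simultaneous induction on $d$ over the two claims (refinement preserves $d$-complexes, refinement preserves $d$-spheres). The paper's own proof quietly relies on the same fact in its ``original vertex'' case, but your version asserts a graph isomorphism that is simply false.

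The second gap is in your proof of the cancellation principle, not in the principle itself. Verifying the vertex-removal axiom for $B$ from $(A+B)\setminus w=A+(B\setminus w)$ requires that joining with a nonempty sphere \emph{reflects} contractibility, and that is an unproven and suspect claim: topologically, the suspension of an acyclic complex with nontrivial fundamental group is contractible while the complex is not, so $A+C$ contractible with $A$ a sphere does not force $C$ contractible, and no combinatorial substitute is available in the paper. Fortunately the cancellation lemma has a cleaner proof that never touches the contractibility axiom of $B$: induct on $\dim A$. If $A=\emptyset$ there is nothing to prove; if $A=P_2$, pick $v\in A$ and note $B=S_A(v)+B=S_{A+B}(v)$ is a unit sphere in the sphere $A+B$, hence a sphere; if $\dim A\geq 1$, pick $v\in A$, observe that $S_{A+B}(v)=S_A(v)+B$ is a sphere and $S_A(v)$ is a sphere of dimension $\dim A-1$, and apply the inductive hypothesis to the pair $(S_A(v),B)$. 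With that replacement, and with the refinement issue handled by the induction on $d$ described above, your argument closes.
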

\begin{proof}
Every unit sphere $S(x)$ is the join of two spheres $S^-(x)$ and $S^+(x)$. 
$S^-(x)$ is always a sphere. The fact that $S^(x)$ is a sphere follows in the
case of an original vertex by the assumption and for a simplex $(x_0, \dots, x_d)$
from the fact that $S^+(x)$ is the intersection of the spheres  $S(x_k)$ which by 
definition is a sphere. By the previous corollary, also $S(x)$ is a sphere. 
\end{proof}

While $S^-(x)$ is always a sphere as we have seen before, the unstable sphere $S^+(x)$
is not a sphere in general. For the central vertex $p$ in the star graph $G$ with 
$n$ spikes for example, the stable sphere $S^+(p)$ is $P_n$, the graph with $n$ vertices
and no edges, which is only a sphere if $n=2$.  In the case of a 
$d$-complex $G$, all unstable spheres $S^+(x)$ are spheres too. The unit sphere
decomposition $S(x) = S^-(x) + S^+(x)$ is then the addition of two spheres. \\

But in general, the structure of the stable sphere $S^+(x)$ also can be described by 
unit spheres. 

\begin{lemma}[The positive sphere lemma]  % close to source
If $G_1$ is the Barycentric refinement of a simplicial complex $G$ and if
$x=(x_0 \dots x_k)$ with $G$-vertices $x_i$ is a vertex in $G_1$, then 
$S^+(x) = \bigcap_{j=0}^k S(x_j)$.           
\end{lemma}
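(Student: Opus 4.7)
My plan is to establish the equality by a direct double inclusion, exploiting the defining fact that two vertices of $G_1$ are adjacent precisely when one corresponds to a simplex of $G$ properly contained in the other, together with the observation that each $x_j$ is a singleton in $G$ and that $x = \{x_0, \dots, x_k\}$ as a set.

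For the inclusion $S^+(x) \subseteq \bigcap_j S(x_j)$, I would take $y \in S^+(x)$. By definition of $S^+$ this means $y$ is a vertex of $G_1$ adjacent to $x$ with $\dim(y) > \dim(x)$, which in $G_1$-adjacency terms forces $x \subsetneq y$ as simplices of $G$. Since $x_j \in x$, we get $\{x_j\} \subsetneq y$ for every $j$, so $y$ is adjacent to $x_j$ in $G_1$ and $y \neq x_j$; hence $y \in S(x_j)$ for each $j$.

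For the reverse inclusion, take $y \in \bigcap_{j=0}^k S(x_j)$. For each $j$, adjacency of $y$ and $x_j$ in $G_1$ with $y \neq x_j$ means one of the simplices strictly contains the other. The case $y \subsetneq \{x_j\}$ is impossible because $y$ is a nonempty simplex and $\{x_j\}$ is a singleton, so we must have $\{x_j\} \subsetneq y$. Collecting this over $j = 0,\dots,k$ yields $x = \{x_0, \dots, x_k\} \subseteq y$, and together with $y \neq x$ (see below) this gives $x \subsetneq y$, putting $y$ into $S^+(x)$.

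The only subtlety, which is also the main thing to watch out for, is the boundary case $y = x$: the simplex $x$ itself is adjacent to every $x_j$ in $G_1$ and distinct from each $x_j$, so one must remember that the intersection is implicitly taken away from $x$ (just as $S(x)$ does not contain $x$). Once this convention is fixed, no further topological input is needed — the argument is purely set-theoretic and relies only on the fact that the $x_j$ are $0$-dimensional simplices of $G$ whose union is $x$.
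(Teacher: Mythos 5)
Your proof is correct and follows essentially the same double-inclusion argument as the paper: membership in $S^+(x)$ means being a strict supersimplex of $x$, hence of each singleton $\{x_j\}$, and conversely adjacency to each $x_j$ forces strict containment of each $\{x_j\}$ since nothing is properly contained in a singleton. Your explicit handling of the boundary case $y=x$ (which lies in $\bigcap_j S(x_j)$ for $k\geq 1$ but not in $S^+(x)\subseteq S(x)$) is a point the paper's proof silently glosses over, and is worth keeping.
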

\begin{proof}
We only have to show $S^+(x) = \bigcap_{j} S(x_j)$. Given a vertex $y \in S^+(x)$, then
$y = x_0 \dots x_k y_1 \dots y_l$. But this is in the intersection.
On other other hand, if we are in the intersection, then we have to be a super simplex
of $x_0,\dots,x_k$. In other words, we have to be in $S^+(x)$.
\end{proof}

{\bf Example:} Let $G$ be the octahedron and $G_1$ its Barycentric refinement.
Now, if $x=x_0$ is a vertex in $G_1$ which was an original vertex, then $S(x) = 0 + S(x)$
where $0$ is the empty graph. If $x=(x_0 x_1)$ belonged to an edge, then 
$S^-(x)$ is a $0$-sphere and $S^+(x)$ is a $0$-sphere, as an intersection of two spheres. 
Now $S^+(x) = S(x_0) \cap S(x_1)$. \\

Given a vertex $x$ in $G_2$ we define the second positive sphere $S^+(x)$ 
as part of the unit sphere $S(x)$ consisting of vertices $z$ in $G_2$ 
which when written as a simplex $Z$ in $G_1$ contains the simplex $X$ corresponding to $x$. 

\begin{lemma}[Second positive sphere]
Given a vertex $x=(x_0, \dots, x_k)$ in $G_2$ with $x_0 \leq \dots \leq x_k)$.
The second positive sphere $S^+(x)$ in $G_2$ is graph isomorphic to the
Barymetric refinement of $S^+(y)$ in $G_1$ with $y=x_k$.
\end{lemma}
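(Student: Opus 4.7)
The plan is to exhibit an explicit graph isomorphism $\phi$ from the Barycentric refinement of $S^+(y) \subset G_1$ (where $y = x_k$) to $S^+(x) \subset G_2$, obtained by prepending the fixed chain $X=\{x_0,\ldots,x_k\}$ to every chain living above $x_k$.

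First, I would unwind the two sides of the claimed isomorphism into purely combinatorial form. On the $G_1$ side, the vertex $y = x_k$ is itself a simplex of $G$, and $S^+(y)$ is the induced subgraph of $G_1$ on those simplices $w$ of $G$ with $x_k \subsetneq w$, with edges given by inclusion. A vertex of the Barycentric refinement of $S^+(y)$ is therefore a nonempty chain $W=(w_1\subsetneq\cdots\subsetneq w_l)$ in $G$ with each $w_j\supsetneq x_k$, two such chains being adjacent iff one is contained in the other. On the $G_2$ side, a vertex of $S^+(x)$ is a simplex $Z$ of $G_1$ (equivalently, a chain in $G$) strictly containing $X$; for the vertices picked out by the second positive sphere, the added elements $Z\setminus X$ are exactly those lying strictly above $x_k$ in $G$ (matching the convention in the preceding second stable sphere lemma, which assigns the below-$x_0$ and internal-gap pieces to $S^-(x)$ so that the unit sphere decomposition $S(x)=S^-(x)+S^+(x)$ holds).

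Second, I would define $\phi(W)=X\cup W=\{x_0,\ldots,x_k,w_1,\ldots,w_l\}$. Since $x_0\subsetneq\cdots\subsetneq x_k\subsetneq w_1\subsetneq\cdots\subsetneq w_l$ is a totally ordered chain of simplices in $G$, its image is a valid simplex of $G_1$, i.e.\ a vertex of $G_2$, and it strictly contains $X$ because $W$ is nonempty. The map is injective and surjective by the obvious inverse $Z\mapsto Z\setminus X$: every chain $Z$ in $G$ strictly containing $X$ with added elements above $x_k$ is uniquely of the form $X\sqcup W$ with $W$ a nonempty chain in $S^+(y)$. Adjacency is preserved in both directions because $X$ is common to every image: $W_1\subsetneq W_2$ iff $X\cup W_1\subsetneq X\cup W_2$, which is precisely the adjacency relation in $G_2$ restricted to $S^+(x)$.

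The main obstacle is the interpretive step identifying the correct combinatorial description of $S^+(x)$ in $G_2$, namely that the elements of $Z\setminus X$ must lie strictly above $x_k$ rather than fill internal gaps of the chain $X$ or sit below $x_0$. Once this is in place (cross-checked against the decomposition $S(x)=S^-(x)+S^+(x)$ and the previous lemma describing $S^-(x)$ as a join of gap spheres), the isomorphism $\phi$ is essentially tautological. The remaining verifications — that $X\cup W$ is genuinely a chain, that $\phi$ is invertible, and that $\phi$ respects the order relation — are routine.
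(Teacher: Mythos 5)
Your proposal is correct and follows essentially the same route as the paper: the paper's proof likewise identifies a vertex $z \in S^+(x)$ with a chain $(x_0,\dots,x_k,x_{k+1},\dots,x_l)$ extending $X$ upward and, conversely, prepends the chain $X$ to elements of $S^+(y)$. Your version is in fact more complete, since you treat arbitrary chains of the refined sphere (not only single simplices $w$), verify that adjacency is preserved in both directions, and explicitly flag the interpretive point---that the elements of $Z \setminus X$ must lie strictly above $x_k$ rather than fill internal gaps of the chain $X$---which the paper's proof silently assumes.
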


\begin{proof}
Given $z \in S^+(x)$, then $x_k \leq z$ and $z=(x_0,\dots_k,x_{k+1}, \dots x_{l})$.
This point $z$ is in $S^+(y) = \bigcap_j S(x_j)$.\\
On the other hand, given $w \in S^+(y)$, then this is a simplex containing $y$
so that $(x_0, \dots, x_{k},w)$ this defines a new vertex $z$ in $G_2$ which 
is connected to $y$. It belongs therefore to $S(x)$. 
\end{proof}

Let $x$ be a vertex in $G_2$ with $x=(x_0, \dots, x_k)$ in $G_1$ with 
${\rm dim}(x)=k$ so that $S(x)$ is the join of a $(k-1)$-sphere $S^-(x)$ and $S^+(x)$. 
We have defined $S^+(x)$ and $S^-(x)$ for vertices $x$. 
Assume the vertex $x$ in $G_2$ belongs to the simplex $x =(x_0 \dots x_k)$ in $G_1$. 
Define $S^-_1(x)$ as the simplex $(x_0 \dots x_k)$ equipped with the $(k-1)$-skeleton complex structure, 
so that it is a sphere. Define also the virtual positive sphere $S^+(x)=\bigcap_j S(\{ x_j \})$ in $G_1$. 
The graph $S^-_1(\overline{x}) + S_1^+(\overline{x})$ 
is the virtual unit sphere of the simplex $x$ in $G_1$. It is topologically equivalent to $S(x)$
in $G_2$. We will see in a moment that it is equivalent to an actual unit sphere $S(y)$, where $y=x_k$.  \\

First a simple property of Barycentric refinement: 

\begin{lemma}[Intersection complexes]
Let $H,K$ be subcomplexes of G so that $H_1,K_1$ are subgraphs of $G_1$.
Then $(H \cap K)_1  = H_1 \cap K_1$.
\end{lemma}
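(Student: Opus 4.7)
The plan is to prove both inclusions by unpacking definitions of Barycentric refinement. Recall that for any subcomplex $L$ of $G$, the Barycentric refinement $L_1$ (viewed as a simplicial complex) has as vertex set the simplices of $L$, and its simplices are the chains $x_0 \subsetneq x_1 \subsetneq \cdots \subsetneq x_m$ of simplices of $L$ under set inclusion. Since $H$ and $K$ are subcomplexes of $G$, both $H_1$ and $K_1$ naturally embed as subcomplexes of $G_1$, so the intersection $H_1 \cap K_1$ makes sense as a sub-object of $G_1$ (both on the vertex level and on the simplex/chain level).

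First I would handle the vertex sets. The vertex set of $(H \cap K)_1$ is the set of simplices of $H \cap K$, which by definition is the set of simplices that lie in both $H$ and $K$. This is precisely the intersection of the vertex sets of $H_1$ and $K_1$, hence equals the vertex set of $H_1 \cap K_1$.

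Next I would verify that the simplices of the two complexes coincide. A chain $\sigma = (x_0 \subsetneq \cdots \subsetneq x_m)$ is a simplex of $(H \cap K)_1$ if and only if every $x_i$ lies in $H \cap K$. But every $x_i \in H \cap K$ is equivalent to the conjunction "every $x_i \in H$" and "every $x_i \in K$", which in turn is equivalent to $\sigma$ being both a simplex of $H_1$ and a simplex of $K_1$. Thus $\sigma$ belongs to $(H \cap K)_1$ if and only if it belongs to $H_1 \cap K_1$, and the two complexes agree.

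There is essentially no obstacle here beyond bookkeeping; the only thing to be careful about is that when we speak of $H_1 \cap K_1$, we mean the intersection taken inside $G_1$ (both as graphs and as Whitney complexes), so that chains in $H_1$ and chains in $K_1$ can be compared as the same kind of object. Once this is fixed, the statement reduces to the tautology that a chain lies in both $H$ and $K$ elementwise if and only if it lies in $H \cap K$ elementwise.
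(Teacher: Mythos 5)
Your proof is correct and follows essentially the same route as the paper: a direct double-inclusion argument unwinding the definition of Barycentric refinement, using that a simplex (or chain of simplices) lies in $H\cap K$ if and only if it lies in both $H$ and $K$. Your version is in fact slightly more careful than the paper's, since you check the identification both at the level of vertices and at the level of chains/simplices, whereas the paper's argument is stated mostly at the vertex level.
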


\begin{proof}
Given a simplex $x$ in $H \cap K$. This means $x=\{x_0,...,x_k\}$ is a set 
which is both in $H$ and $K$ and all are connected. Now, also $(x_0, ...,x_k)$ is
a simplex in $H_1$ as well as in $K_1$. The simplex is in the intersection.        
on the other hand, given a vertex $x$ in $H_1 \cap K_1$. Then $x$ is both in $H_1$
and $K_1$, meaning that $x=\{ x_0, \dots, x_k \}$ is a face in $H$ and in $K$.
\end{proof}

For every vertex $x$ in $G_2$ corresponding to a vertex $(x_1, \dots, x_k) \in G_1$, 
each of the elements $x_j$ are vertices in $G_1$. 
The following lemma makes a step to relate $(x)$ with a new unit sphere $S(y)$. 

\begin{lemma} 
$S^+(x)$ with $G_2$ is the Barycentric refinement of a virtual positive sphere 
in $G_1$ which agrees with the positive sphere $S^+(y)$ in $G_1$. 
\end{lemma}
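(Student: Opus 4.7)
The plan is to iterate the Positive Sphere Lemma and invoke the Intersection Complexes Lemma so as to commute Barycentric refinement with intersection.

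First, since $x=(x_0,\dots,x_k)$ is a simplex of $G_1$ whose vertices $x_j$ are themselves vertices of $G_2$, applying the Positive Sphere Lemma with $G_1$ in the role of $G$ and $G_2$ in the role of $G_1$ yields
\[
S^+_{G_2}(x) \;=\; \bigcap_{j=0}^k S_{G_2}(x_j).
\]
Next, I identify each $S_{G_2}(x_j)$ with the Barycentric refinement $(S_{G_1}(x_j))_1$ of the subcomplex $S_{G_1}(x_j)\subseteq G_1$: a vertex of $S_{G_2}(x_j)$ is a chain of simplices of $G$ that contains $x_j$ together with at least one other entry, and the deletion map $Z\mapsto Z\setminus\{x_j\}$ is a graph isomorphism onto the simplices of $S_{G_1}(x_j)$, preserving sub-chain inclusion in both directions. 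Combining this identification with the Intersection Complexes Lemma, applied iteratively to the subcomplexes $S_{G_1}(x_j)$ of $G_1$, gives
\[
S^+_{G_2}(x) \;\cong\; \bigcap_j (S_{G_1}(x_j))_1 \;=\; \Bigl(\bigcap_j S_{G_1}(x_j)\Bigr)_1.
\]
The expression inside the outer refinement is, by definition, the virtual positive sphere of $x$ in $G_1$. This proves the first half of the statement.

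To finish, I identify the virtual positive sphere $\bigcap_j S_{G_1}(x_j)$ with the positive sphere $S^+_{G_1}(y)$, where $y=x_k$ is the top of the chain. Writing $y=(y_0,\dots,y_l)$ as a simplex of $G$ with vertices $y_i$, the Positive Sphere Lemma now applied at the $(G,G_1)$ level gives $S^+_{G_1}(y)=\bigcap_i S_{G_1}(y_i)$. Since every $x_j$ is a face of $y$ built from the vertices $y_i$ and the chain terminates at $x_k=y$, a simplex lying in the relevant positive direction with respect to every $x_j$ is precisely a simplex strictly containing every vertex $y_i$; the gap-sphere decomposition from the Second Stable Sphere Lemma then rules out spurious contributions from between or below the chain, so the two intersections agree.

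The main obstacle, in my view, is this last reconciliation: one must carefully match the chain-indexed intersection with the vertex-indexed intersection, excluding simplices that are comparable to every $x_j$ but do not lie strictly above $y$. Once this bookkeeping is in place, the rest of the proof is a clean composition of two applications of the Positive Sphere Lemma (at the $(G,G_1)$ and $(G_1,G_2)$ levels) with one application of the Intersection Complexes Lemma.
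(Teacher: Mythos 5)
Your first half is sound and in fact more careful than what the paper writes: the paper simply asserts that $S^+(x)$ in $G_2$ is the Barycentric refinement of the virtual positive sphere $\bigcap_j S_{G_1}(x_j)$, whereas you derive it by applying the Positive Sphere Lemma one level up, identifying each $S_{G_2}(x_j)$ with $(S_{G_1}(x_j))_1$ via $Z\mapsto Z\setminus\{x_j\}$, and commuting refinement with intersection through the Intersection Complexes Lemma. That is the same reduction the paper intends, just actually justified.

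The second half has a genuine gap, and it is also not the route the paper takes. The paper does not set $y=x_k$: it builds a new $k$-simplex $y=(y_0,\dots,y_k)$ of $G$ by choosing $y_0\in X_0$ and $y_j\in X_j\setminus X_{j-1}$ along the nested chain $X_0\subset\cdots\subset X_k$, and compares the virtual positive sphere with $S^+(y)=\bigcap_j S(\{y_j\})$ for that transversal $y$. Your identity $\bigcap_j S_{G_1}(x_j)=S^+_{G_1}(x_k)$ fails whenever the chain has a nonempty gap. Take $G$ to be the Whitney complex of $K_3$ on $\{a,b,c\}$ and the chain $X_0=\{a\}\subset X_1=\{a,b,c\}$: then $S(\{a\})\cap S(\{a,b,c\})$ in $G_1$ consists of the two mutually incomparable edges $\{a,b\}$ and $\{a,c\}$, a $0$-sphere $P_2$, while $S^+(\{a,b,c\})$ is empty. (This is consistent with $G_2$: the super-chains of $\{\{a\},\{a,b,c\}\}$ really do form a $P_2$, and the full unit sphere of that vertex is $C_4=P_2+P_2$.) The simplices lying strictly between consecutive $X_j$, or strictly below $X_0$, belong to the intersection but not to $S^+(x_k)$; the gap-sphere decomposition of the Second Stable Sphere Lemma shows precisely that these contributions are generically nonempty spheres, so it cannot be invoked to ``rule them out.'' The step you yourself flag as the main obstacle is exactly where the argument breaks. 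What does survive is that the virtual positive sphere is the join of the below-and-gap spheres with $S^+(x_k)$, so identifying it with a single positive sphere $S^+(y)$ in $G_1$ requires either the paper's transversal choice of $y$ or an additional argument that all gaps are empty.
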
 
\begin{proof}
$(x_0,x_1, \dots ,x_k) \subset G_1$. The positive sphere $S^+_2(x)$ is
the Barycentric refinement of $\bigcap S(x_k)$ in $G_1$. Each $x_k$
corresponds to a simplex $X_k$ in $G$.  We have now to find a vertex $y \in G_1$ 
such that $\bigcap S(x_j)$ is equivalent to $S^+(y)$. Such an $y$ corresponds to
a simplex $(y_0,y_1, \dots, y_k)$ in $G$. We can construct that $k$-simplex
from the simplices $X_0, \dots, X_k$ inside $G$. The simplices $X_k$ are 
all contained in each other since the $x_j$ are all connected. 
Assume without loss of generality that things are numbered so that
$X_0 \subset X_1  \cdots \subset X_k$.  We can now pick 
$y_0 \in X_0, y_1 \in X_1 \setminus X_0, y_2 \in X_2 \setminus X_1, \dots X_k \setminus X_{k-1}$.
Now $S^+(y)$ is a subgraph of $G_1$ containing all vertices which belong to 
simplices in $G$ containing all the $y_j$. $S^+(y)$ is $\bigcap S(\{y_j\})$ 
which is the Barycentric refinement of $\bigcap_j S(y_j)$.
$S^+(x)$ is $\bigcap S(x_j)$ whose Barycentric refinement is $S^+(x)$ in $G_2$. 
\end{proof} 

{\bf Examples.}\\
{\bf 1)} If $x=(x_0)$ is $0$-dimensional and $(x_0)$ is the vertex in $G_1$ which belongs to $x$,
then we can take $y=x_0$. The sphere $S(y)=S^+(y)$ has as a Barycentric refinement
the sphere $S(x) = S^+(x)$.  \\
{\bf 2)} If ${\rm dim}(x) = 1$ and $x=(x_0,x_1)$ then we get the virtual unit sphere 
$S( (x_0,x_1) )  = P_2 + S^+(x)$ in $G_1$. The graph $S^+(x)$ in $G_2$ is isomorphic
to the refinement of $S(x_0) \cap S(x_1)$. 

\section{The twin propositions} 

Given a connection graph $G'$ of a complex $G$, we can either remove a vertex $x$ in $V(G')=V(G_1)$
or we can add a vertex $x$. We have seen already that adding a vertex changes the
Fredholm determinant by a factor $i(x)$. It is a bit surprising but in both cases, 
the Fredholm characteristic $\psi(G) = {\rm det}(1+A'(G))$ gets multiplied by an integer. \\

Here is again the case when adding a cell. It is a bit more general than before as we now do 
not insist that the new cell has a unit sphere smaller dimensional cells only. The proof is the
same as before and uses the concept of a valuation on simplicial complexes.  This is a
functional $X$ on subcomplexes, which satisfies the linearity condition
$X(H \cup K) + X(H \cap K) = X(K) + X(H)$ (we can not write $X(H \Delta K) = X(K) + X(H)$ as
$H \Delta K=H + K$ is not a complex any more but it explains the name linearity as 
the symmetric difference $\Delta$ is the addition in a larger Boolean ring of structures). The
prototype of a valuation is the Euler characteristic. As in the continuum, where valuations are
defined on convex subsets of Euclidean space (which naturally come with a simplicial complex structure). 
It is important to see that a valuation is not a functional on subsets like in measure theory and indeed
valuations do not extend to the measure theory level except in geometric measure theory, but where we
deal with a Fermionic analogue of valuations. \\

In the discrete also, it is not just a functional on subsets. The Euler characteristic for example
depends on the choice of simplicial complex structure. The Euler characteristic is the prototype of
a valuation. Actually there is a $({\rm dim}(G)+1)$-dimensional space of valuations by discrete Hadwiger
and the Euler characteristic is up to a scaling factor the only 
which is a combinatorial invariant: the Barycentric operator on $f$-vectors has
only one eigenvalue $1$ and eigenvector $(1,-1,1,\dots)$. Repeating the Barycentric refinement leads to 
a central theorem \cite{KnillBarycentric2}. 
In the proof, we again make use of the fact that the only valuation which assumes the value $1$ 
on all simplices must be the Euler characteristic. 

\begin{propo}[Attaching cells]
Let $G$ be a simplicial complex and $G'$ its connection graph. 
If we chose a sub-complex $H$ of $G$ leading to a subgraph $H'$ of $G'$ and 
attach a new cell $x$ to it, leading to a new complex $G'\cup_H x$, then the Fredholm 
characteristic $\psi$ of $G+x$ gets multiplied by a factor $i(x)=1-\chi(H)$:
$$ \psi(G \cup_H x) = (1-\chi(H)) \psi(G) \; . $$
\end{propo}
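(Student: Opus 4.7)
The plan is to reduce the claim to a single linear-algebraic identity by a Schur-complement expansion, and then to identify a naturally defined functional on subcomplexes with the Euler characteristic via the discrete Hadwiger classification of valuations.

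First, I would set up the block decomposition. Ordering the simplices so that $x$ is last and writing $v = v_H \in \{0,1\}^{V(G')}$ for the indicator vector of the simplices of $H$, the augmented Fredholm matrix takes the block form
\[
M' \;=\; \begin{pmatrix} M & v \\ v^T & 1 \end{pmatrix}, \qquad M = 1 + A(G').
\]
The Schur-complement identity gives
\[
\psi(G \cup_H x) \;=\; \det M' \;=\; \psi(G)\,\bigl(1 - v^T M^{-1} v\bigr),
\]
so the proposition reduces to the identity
\[
X(H) \;:=\; v_H^T\, M^{-1}\, v_H \;=\; \sum_{y,z \in H} g(y,z) \;=\; \chi(H).
\]

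Second, I would regard $X$ as a functional on the lattice of subcomplexes of $G$ and invoke the discrete Hadwiger theorem cited earlier in the paper: the unique valuation assuming the value $1$ on every singleton simplex subcomplex is the Euler characteristic. Thus it suffices to verify (i) that $X$ satisfies the valuation law $X(H_1 \cup H_2) + X(H_1 \cap H_2) = X(H_1) + X(H_2)$, and (ii) that $X(\{y\}) = g(y,y) = 1$ for every singleton $\{y\}$. For (ii) one invokes the unit-sphere character formula $g(y,y) = 1 - \chi(S(y))$ together with the hypothesis that the attachment makes $S(y)$ contractible. For (i), expanding with the pointwise relation $v_{H_1 \cup H_2} + v_{H_1 \cap H_2} = v_{H_1} + v_{H_2}$ and the symmetry of $M^{-1}$ reduces the valuation law to the vanishing of the cross term
\[
v_{H_1 \setminus H_2}^T\, M^{-1}\, v_{H_2 \setminus H_1} \;=\; 0.
\]
Once (i) and (ii) are established, Hadwiger forces $X = \chi$, and substituting into the Schur-complement identity yields $\psi(G \cup_H x) = (1-\chi(H))\,\psi(G)$.

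The hard part will be the cross-term vanishing in step (i): the functional $v \mapsto v^T M^{-1} v$ is a quadratic form, not a linear one, so the valuation law is not automatic and one must exploit the specific support structure of the connection-graph Green function. The natural route is to use the Intersection Complex Lemma $(H \cap K)_1 = H_1 \cap K_1$ proved earlier in the paper together with the restrictions on which $H$ arise as attaching subcomplexes (so that the relevant set-differences correspond to simplices disconnected enough in $G'$ for their pairwise Green function entries to cancel). If this vanishing can be packaged cleanly, the remainder of the argument is mechanical.
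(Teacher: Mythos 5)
Your Schur-complement reduction is correct, and it is really a sharper way of writing what the paper itself does: since $\psi(G\cup_Hx)-\psi(G)=-\psi(G)\,v_H^TM^{-1}v_H$, your quadratic form $X(H)=v_H^TM^{-1}v_H=\sum_{y,z\in H}g(y,z)$ is exactly the normalized functional $Y(H)$ that the paper feeds into the Hadwiger argument, so both proofs share the same skeleton (valuation law plus normalization on simplices). The trouble is the step you defer as ``the hard part'': the cross term $v_{H_1\setminus H_2}^TM^{-1}v_{H_2\setminus H_1}$ does not vanish, because the off-diagonal Green function entries $g(y,z)$ are generally nonzero even for disjoint simplices -- the paper itself stresses that these off-diagonal values are unidentified and nontrivial. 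Concretely, for the one-edge complex $G=\{\{1\},\{2\},\{1,2\}\}$ the matrix $M=1+A'$ is invertible with $g(\{1\},\{1\})=g(\{2\},\{2\})=0$ and $g(\{1\},\{2\})=-1$; taking $H_1=\{\{1\}\}$ and $H_2=\{\{2\}\}$ gives $X(H_1\cup H_2)=-2$ while $X(H_1)+X(H_2)-X(H_1\cap H_2)=0$. So the valuation law for your $X$ fails outright, and no appeal to the Intersection Complex Lemma can rescue it. (The paper's own justification of the valuation property is a one-line heuristic about counting Leibniz paths through $x$ and does not address this either; you have correctly located the genuine crux of the proposition, but the plan as written cannot be completed.)

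The normalization step is also defective on two counts. First, by discrete Hadwiger the space of valuations on a $d$-dimensional complex is $(d+1)$-dimensional, so prescribing the value $1$ on singleton vertices imposes only one linear condition and does not single out the Euler characteristic; the paper instead verifies $Y(H)=1$ for complete subcomplexes $H$ of \emph{every} dimension, using that $\psi(G\cup_Hx)=0$ when $H\cup\{x\}$ is a simplex (pairing Leibniz paths of opposite parity). Second, your justification of $g(y,y)=1$ invokes the unit-sphere character formula $g(y,y)=1-\chi(S(y))$, which is precisely the theorem these twin propositions are meant to establish -- a circularity -- and the conclusion is false anyway, as the example above with $g(\{1\},\{1\})=0$ shows. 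To salvage the Schur-complement route you would need to prove the identity $\sum_{y,z\in H}g(y,z)=\chi(H)$ directly for the attaching subcomplexes $H$ actually allowed (it holds for $H=G$ as an energy identity but not for arbitrary subcomplexes), and to replace the singleton normalization by the value on simplices of all dimensions.
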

\begin{proof} 
(i) The map $H \to X(H) = \psi(G \cup_H x)-\psi(G)$ is a valuation. 
The reason is geometric: as this is a super count of the number of new 
paths which are allowed when adding the vertex $x$. Since $x$ is a single 
vertex and only one path can occupy it, the linearity condition is satisfied. \\
(ii) Also the map $H \to Y(H) = X(H)/(-\psi(G))$ is an integer valued valuation.
The reason is that by unimodularity (or induction assumption if
we want to use this proposition as a proof), $\psi(G)$ is $\{ -1, 1\}$ valued
and because when looking at a function of $H$, the complex $G$ is fixed. \\
(iii) The map $H \to Y(H)$ takes the value $1$ on simplices $H$. The reason is
that $H \cup \{x\}$ is again a simplex and that we look now at the collection of
all Leibniz paths in $G' \cup_H \{x\}$ which go through $x$. Now $Y(H)=1$
is equivalent to $X(H)=-\psi(G)$ and so $\psi(G \cup_H x)=0$. The reason why this
is zero is because $\psi(K_n)$ is zero for every complete graph $K_n$ with $n>1$.
every path in $G \cup_H x$ can now be paired with an other path of different parity.\\
(iv) Because all valuations which take the value $1$ on simplices must be the
Euler characteristic, we have $Y(H) = \chi(H)$. 
\end{proof}

In the new CW complex, $S(x)$ is the unit sphere in the Barycentric 
refinement graph of $G+x$. \\

Now lets start with a complex and write $G-x$ for the complex, where the cell $x$ has been removed.
Now what matters is the sphere $S(x)$ in $G_1$ and not the sphere in $G'$. 
The reason is that if we remove the cell x, we remove all links to sub-simplices and super simplices. 
The proof is very similar. We write it again down so that we see the difference.

\begin{propo}[Removing cells]
If $G$ is a simplicial complex and $G'$ its connection graph. 
If we chose a vertex $x$ in $G'$ and remove $x$, leading to the new complex 
$G'-x = (G-x)'$, then the Fredholm characteristic of $G-x$ 
is the Fredholm characteristic of $G$ multiplied by a factor $i(x)=1-\chi(S(x))$.
$$ \psi(G-x) = (1-\chi(S(x))) \psi(G) \; . $$
\end{propo}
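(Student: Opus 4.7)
The plan is to reduce this to the Attaching cells proposition, together with the fact that the boundary of a simplex is a sphere. Since $G-x$ must be a simplicial complex, the simplex $x$ is maximal in $G$, so the Fredholm matrix $1+A'(G-x)$ coincides with the principal minor $M^{(x)}$ of $M:=1+A'(G)$ obtained by deleting the row and column indexed by $x$. Writing $G = (G-x)\cup_{H} x$ with $H=S^-(x)=S(x)$ (the boundary complex of the simplex $x$, which is a $(\dim(x)-1)$-sphere since $x$ has no super-simplices), the Attaching cells proposition gives
$$\psi(G) \;=\; (1-\chi(S(x)))\,\psi(G-x).$$
Because $S(x)$ is a sphere, $\chi(S(x))\in\{0,2\}$, so the factor $\varepsilon:=1-\chi(S(x))$ lies in $\{-1,+1\}$ and in particular $\varepsilon^2=1$. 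Multiplying both sides of the above by $\varepsilon$ yields the claimed formula $\psi(G-x)=(1-\chi(S(x)))\,\psi(G)$.

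A second, equally short route, is via Cramer's rule combined with the unit sphere character formula already established. Namely,
$$(M^{-1})_{xx} \;=\; \frac{\det(M^{(x)})}{\det(M)} \;=\; \frac{\psi(G-x)}{\psi(G)},$$
and by the identity $g(x,x)=(1+A')^{-1}_{xx}=1-\chi(S(x))$ proved earlier in the paper, the left-hand side equals $1-\chi(S(x))$, giving the result immediately. Here unimodularity ensures that $\psi(G)\neq 0$ so the division is legitimate.

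If one instead wants to mirror the valuation argument of the Attaching cells proposition step by step (as the paper suggests by writing ``the proof is very similar''), the natural setup is: for subcomplexes $H$ of $S(x)$ in $G_1$, put $X(H)=\psi((G-x)\cup_H x)-\psi(G-x)$ and check the four properties (valuation, integer-valued after dividing by $-\psi(G-x)$, value $1$ on simplices, hence equal to $\chi$ by discrete Hadwiger). The main obstacle in writing this out in parallel with the previous proof is bookkeeping: one must confirm that the attachment $(G-x)\cup_H x$ is genuinely a simplicial complex for every $H\subseteq S(x)$ and that $H\mapsto X(H)$ really is a valuation in $H$ (the ``single vertex can occupy only one Leibniz path'' argument). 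Once these are in place, evaluating at $H=S(x)$ reproduces the attaching cells identity $\psi(G)=(1-\chi(S(x)))\psi(G-x)$, and the sphere identity $\varepsilon^2=1$ converts this into the desired formula. No step in any of the three approaches is computationally heavy; the only subtlety is the implicit maximality of $x$ required for $G-x$ to be a complex, which should be stated explicitly.
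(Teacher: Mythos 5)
Your reduction rests on the assumption that $x$ is a maximal simplex, which you extract from the phrase ``leading to the new complex $G'-x=(G-x)'$''. That assumption empties the proposition of its content and is not what is intended: the operation is deletion of the vertex $x$ from the connection graph $G'$ (equivalently, passing to the principal minor of $1+A'$), and the statement is needed for \emph{every} cell $x$, since combined with Cramer's rule it is exactly what yields the paper's main identity $g(x,x)=(1+A')^{-1}_{xx}=1-\chi(S(x))$. For a non-maximal $x$ the unit sphere $S(x)=S^-(x)+S^+(x)$ in $G_1$ need not be a sphere and $1-\chi(S(x))$ need not lie in $\{-1,1\}$: for $G=\{\,\{a\},\{b\},\{a,b\}\,\}$ and $x=\{a\}$ one has $S(x)=\{\{a,b\}\}$, $\chi(S(x))=1$, and indeed $\det(1+A'(G'-x))=0$. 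This kills both the step ``$\varepsilon^2=1$, multiply by $\varepsilon$'' and the identification $H=S(x)=S^-(x)$ on which your appeal to the Attaching-cells proposition depends. Even for maximal $x$, re-attaching $x$ to $G-x$ along $H=\partial x$ does not reproduce the connection graph $G'$ once some cell of $G-x$ meets $x$ without being contained in it (two triangles sharing an edge already show this); and in that special case the claim is anyway just the unimodularity product formula $\psi=\prod_y\omega(y)$, since then $1-\chi(S(x))=(-1)^{\dim x}$.

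Your second route is circular: the identity $g(x,x)=1-\chi(S(x))$ is not ``proved earlier in the paper''; it is the paper's main theorem, and this proposition, read through the same Cramer's rule computation in the other direction, is its proof. Your third sketch is the closest to what the paper actually does, but the paper runs the valuation argument \emph{directly} on the removal rather than re-deriving the attaching identity and inverting: steps (i)--(iv) of its proof set up a valuation normalized by $\psi(G)$, show it takes the value $1$ on simplices, conclude by discrete Hadwiger that it equals $\chi$, and then evaluate at $H=S(x)$ to get $\chi(S(x))=1-\psi(G-x)/\psi(G)$, hence $\psi(G-x)=(1-\chi(S(x)))\psi(G)$ using $\psi(G)^2=1$. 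The substantive point your proposal never engages is why the relevant complex is the unit sphere of $x$ in $G_1$ (the cells comparable to $x$) rather than the set of cells merely intersecting $x$ (the neighbors of $x$ in $G'$) or the boundary $\partial x$; that is precisely what distinguishes the removal proposition from the attachment one.
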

\begin{proof}
(i) The map $H \to X(H) = \psi(G) - \psi(G - x)$ is a valuation.
(ii) Scale it to $Y(H) = X(H)/\psi(G)$. Again this is integer valued
complete subgraphs. \\
(iii) The valuation $Y$ takes the value $1$ on complete subgraphs. \\
(iv) As any valuation taking the value $1$ on complete subgraphs is Euler characteristic, we 
 get $Y = \chi(H)$. This implies $\chi(S(x)) = X(S(x))/\psi(G)
= (\psi(G) - \psi(G - x))/\psi(G) = 1-\psi(G - x)/\psi(G)$ and so
$\chi(S(x))-1 = -\psi(G-x)/\psi(G)$ which is equivalent to the claim. 
\end{proof}

\section{Unit sphere topology}

We establish now that the set of unit sphere topologies
is stable from $G_1$ on. Further Barycentric refinements do not produce new
sphere topologies:

\begin{thm}
a) Every unit sphere $S(y)$ in $G_1$ is topologically equivalent to a sphere $S(x)$ in $G_2$.  \\
b) Every unit sphere $S(x)$ in $G_2$ is topologically equivalent to a sphere $S(y)$ in $G_1$. 
\end{thm}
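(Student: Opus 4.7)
For part (a), given $y\in V(G_1)$, view $y$ as the $0$-simplex $\{y\}$ of the complex $G_1$, which is itself a vertex $x\in V(G_2)$. Apply the unit sphere decomposition lemma to $x$ in $G_2=(G_1)_1$: since $\{y\}$ has no non-empty proper subset, $S^-(x)$ is empty, and $S^+(x)$ consists of all simplices of $G_1$ strictly containing $\{y\}$. The map $\sigma\mapsto\sigma\setminus\{y\}$ is a graph isomorphism from $S(x)$ onto the Barycentric refinement of $S(y)$ in $G_1$, so $S(x)\simeq S(y)$ topologically.

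For part (b), given $x=(x_0,\dots,x_k)\in V(G_2)$ corresponding to a chain $X_0\subsetneq\cdots\subsetneq X_k$ in $G$, I take $y:=x_k\in V(G_1)$ and compare join decompositions. In $G_2$, $S^-(x)$ is the Barycentric subdivision of $\partial\Delta^k$, a $(k-1)$-sphere; in $G_1$, $S^-(y)=\partial X_k$ is a $(\dim X_k -1)$-sphere. A vertex of $S^+(x)$ is a chain in $G$ strictly extending $(X_0,\dots,X_k)$ by at least one $G$-simplex $z$, and $z$ lies in exactly one of $k+2$ ``slots'': $A_0=\{z:z\subsetneq X_0\}$, $B_j=\{z:X_j\subsetneq z\subsetneq X_{j+1}\}$ for $0\le j<k$, or $A_{k+1}=\{z:X_k\subsetneq z\}$. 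Since the $X_j$ form a chain, any two elements from different slots are automatically comparable in $G$, so $S^+(x)$ is graph-isomorphic to the Barycentric refinement of the join $A_0+B_0+\cdots+B_{k-1}+A_{k+1}$. Here $A_0=\partial X_0$, each nonempty $B_j$ is the boundary of a $(\dim X_{j+1}-\dim X_j -1)$-simplex, and $A_{k+1}$ is exactly $S^+(y)$ in $G_1$.

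Since a join of spheres is again a sphere with dimensions adding appropriately, a short dimension count shows $S^-(x)+A_0+B_0+\cdots+B_{k-1}$ is a $(\dim X_k -1)$-sphere, topologically equivalent to $\partial X_k = S^-(y)$. Associativity of the join then gives $S(x)\simeq S^-(y)+A_{k+1}=S^-(y)+S^+(y)=S(y)$, proving (b). The main obstacle I anticipate is making the slot decomposition rigorous: one must check the natural bijection from $S^+(x)$ to simplices of the join complex is actually a graph isomorphism, correctly handle degenerate slots $B_j$ (empty $(-1)$-spheres, acting as the join identity, precisely when $\dim X_{j+1}=\dim X_j+1$), and track the dimension bookkeeping so the intermediate join collapses exactly to dimension $\dim X_k -1$.
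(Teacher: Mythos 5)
Your proof is correct and takes essentially the same route as the paper: part (a) is the paper's observation that $S(y)_1$ is isomorphic to $S(\{y\})$ in $G_2$, and your slot decomposition in part (b) is exactly the paper's decomposition of $S(x)$ into the sub-chain sphere, the ``gap spheres'' of the chain $X_0\subsetneq\cdots\subsetneq X_k$, and the second positive sphere identified with $S^+(y)$ for $y=x_k$. If anything your write-up is the more careful one, since your explicit dimension count over all slots is what makes the paper's terse claim that the stable part matches $S^-(y)$ in dimension actually come out right, and your top slot $A_{k+1}$ correctly isolates the part of $S^+(x)$ that is the refinement of $S^+(y)$ from the insertions into the lower gaps.
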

\begin{proof}
a) The Barycentric refinement $S(x)_1$ is isomorphic to $S((x))$ in $G_2$, if
$(x)$ is the vertex in $G_2$ which corresponds to the singleton vertex $(x)$. \\
b) Given now a unit sphere $S(x)$ in $G_2$. It can be decomposed as 
$S^-(x)  +  S^+(x)$. If $x=(x_0, \dots, x_k)$ is the simplex in $G_1$ which belongs
to the vertex $x$ in $G_2$, take $y=x_k$. We have seen that
$S^-(y)$ is a sphere of the same dimension than $S^-(x)$. It is therefore topologically 
equivalent. Furthermore, we have seen that $S^+(x)$ is the Barycentric refinement of $S^-(x)$.
It is also topologically equivalent. 
\end{proof}

{\bf Examples:} \\
{\bf 1)} If $G$ is a one dimensional graph, then the set of unit spheres of $G$ 
and the set of unit spheres of $G_1$ agree as we have $S(x)=P_{d(x)}$ with $d(x)$
meaning the vertex degree. But this is special as the Barycentric refinement of a 
graph without edges is the graph itself.  \\
{\bf 2)} If $G$ is a graph with only one dimensional spheres, then this is true too 
for $G_1$. For an octahedron or icosahedron from example, the unit spheres are cyclic
graphs. After parametric refinement, this remains so.  \\

The next lemma assures that the join operation honors topological equivalence. 
Now this is obvious if we look at topological realizations in some Euclidean space. 
Since we don't want to use the functor of topological realizations of abstract simplicial 
complexes to actual simplicial complexes, and since we don't need topological equivalence
in the sense of topology \cite{KnillTopology}, lets for the following just take a weaker
notion of equivalence. We only look at the Euler characteristic
because this is the only thing we really need for establishing the Barycentric
invariance of the Green function diagonal values. \\ 

So, we say that two finite abstract simplicial complexes are topologically equivalent 
if their Euler characteristic agree: 

\begin{lemma}
Given two pairs of simplicial complexes $A,B$  and $C,D$.
If $\chi(A) = \chi(C)$ and $\chi(B)=\chi(D)$, then
$\chi(A+C)=\chi(B+D)$. 
\end{lemma}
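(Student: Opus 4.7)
The plan is to reduce the claim to the Euler characteristic product formula for the Zykov join that appears in the introduction,
\[
\chi(G+H) \;=\; \chi(G) + \chi(H) - \chi(G)\,\chi(H),
\]
equivalently $i(G+H) = i(G)\,i(H)$ for $i(G) = 1 - \chi(G)$. Once this formula is granted, the lemma is essentially symbolic substitution.

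First I would briefly rederive the formula to keep the argument self-contained. The Euler generating polynomial $f_G(x) = 1 + \sum_{k \geq 0} v_k(G)\,x^{k+1}$ is multiplicative under the join, $f_{G+H}(x) = f_G(x)\,f_H(x)$, by a direct combinatorial count: a simplex of $G+H$ is either a simplex of $G$, a simplex of $H$, or a disjoint union of a $j$-simplex from $G$ with an $\ell$-simplex from $H$, in which case it has dimension $j+\ell+1$ in $G+H$. This is exactly the coefficient identity encoded by the polynomial product, where the leading $1$ in $f_G$ takes care of the empty-simplex bookkeeping. Setting $x = -1$ and using $\chi(G) = 1 - f_G(-1)$ yields the displayed identity after expansion.

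Second, I would substitute into both sides of the conclusion. Since $\chi(G+H)$ is determined by $G$ and $H$ only through the two numbers $\chi(G)$ and $\chi(H)$, replacing a factor by any complex with the same Euler characteristic preserves $\chi$ of the join. Concretely,
\[
\chi(A+C) \;=\; \chi(A) + \chi(C) - \chi(A)\chi(C) \;=\; \chi(B) + \chi(D) - \chi(B)\chi(D) \;=\; \chi(B+D),
\]
the middle equality invoking the hypothesised pairwise equalities of Euler characteristics between the two pairs.

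There is no substantial obstacle. The only combinatorial content is the product formula $f_{G+H} = f_G f_H$, which is immediate from the definition of the Zykov join; once that is in hand, the lemma is pure algebra on two numerical invariants. Conceptually, the statement is the assertion that the weak topological equivalence adopted here (equality of Euler characteristic) is a congruence for the join, which is precisely what the preceding theorem on unit sphere topology needs in order to infer Barycentric invariance of the Green function diagonal.
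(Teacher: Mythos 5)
Your approach is the same as the paper's: both proofs reduce the claim to the multiplicativity of $i(G)=1-\chi(G)$ under the Zykov join, which comes from $f_{G+H}=f_Gf_H$, and your rederivation of that product formula is correct. Methodologically there is nothing to add.

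One thing to flag, however: the middle equality in your display,
\[
\chi(A)+\chi(C)-\chi(A)\chi(C) \;=\; \chi(B)+\chi(D)-\chi(B)\chi(D),
\]
does not follow from the hypotheses as literally stated. The lemma assumes $\chi(A)=\chi(C)$ and $\chi(B)=\chi(D)$; writing $a=\chi(A)=\chi(C)$ and $b=\chi(B)=\chi(D)$, the two sides become $1-(1-a)^2$ and $1-(1-b)^2$, and nothing in the hypotheses relates $a$ to $b$. What your computation actually uses is $\chi(A)=\chi(B)$ and $\chi(C)=\chi(D)$. As literally written the lemma is false: take $A=C=K_1$ and $B=D=P_2$; the stated hypotheses hold, yet $\chi(A+C)=\chi(K_2)=1$ while $\chi(B+D)=\chi(C_4)=0$. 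The letters in the statement are evidently permuted --- the intended claim, consistent with the surrounding sentence that the join ``honors topological equivalence'' and with the paper's own one-line proof via $1-\chi(A+C)=(1-\chi(A))(1-\chi(C))$, is that $\chi(A)=\chi(B)$ and $\chi(C)=\chi(D)$ imply $\chi(A+C)=\chi(B+D)$. Your argument proves exactly that intended statement, in the same way the paper does; just state explicitly which equalities you are invoking in the substitution step (or note the typo in the hypotheses), since as written that step is not licensed by the lemma's assumptions.
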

\begin{proof}
This follows from fact that $G \to 1-\chi(G)$ is a group homomorphism. 
$1-\chi(A+C) = (1-\chi(A)) (1-\chi(C))$. 
\end{proof}

Note however that the join does not commute with the Barycentric refinement:  \\

{\bf Examples.} \\
{\bf 1)} The Barycentric refinement $(C_4+C_4)_1$ of the join $C_4 + C_4$ is a graph
with $80$ vertices. It is larger than the join of the refinements 
$C_8 + C_8 = (C_4)_1 + (C_4)_1$ which is a graph with 16 vertices. 
Still, both are 3-spheres.  \\
{\bf 2)} The Barycentric refinement $(P_3+P_3)_1$ of the utility graph $P_3+P_3$
is a graph with 15 vertices and 18 edges. The join of the Barycentric refinements
is the utility graph itself: $(P_3)_1 + (P_3)_1 = P_3 + P_3$.  \\

{\bf Remarks:} \\
{\bf 1)} More generally if $H \sim H'$ and $K \sim K'$, then $H \sim K$ is equivalent to $H' \sim K'$.
This could be shown most conveniently using an embedding. \\
{\bf 2)} It is not true that $H_1 +  K_1 = (H + K)_1$ as the example of $H_1=C_4,K_1=P_2$ shows. 
In this case both are equivalent 2-spheres. 

\section{Prime and prime connection graphs} 

Given an integer $n \geq 2$, we can look at two graphs $G_n$ and $H_n$. Both
have the vertex set set $\{2, \cdots, n \}$. In the {\bf prime graph} 
$G_n$ case, vertices are connected, if one divides the other. In the 
prime connection graph $H_n$, two vertices are connected, if they have a 
common divisor larger than $1$. 
The graph $G_n$ is part of the Barycentric refinement of the complete
graph on the set of primes and the graph $H_n$ is part of the connection
graph of the complete graph on the set of primes.  \\

We can now express the sphere spectrum of $G_n$ in terms of the adjacency 
matrix $A'$ of $H_n$. It is an application of the connection we have 
built between the Green function values, the diagonal values of the 
inverse of the Fredholm matrix, and the Euler characteristic of the 
corresponding sphere. 

\begin{coro}
$1-\chi_{G_n}(S(x)) = (1+A_{H_n})^{-1}_{xx}$. 
\end{coro}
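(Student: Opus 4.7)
The plan is to exhibit an abstract simplicial complex $G$ on the prime divisors such that $G_n$ is precisely the Barycentric refinement graph $G_1$ and $H_n$ is precisely the connection graph $G'$; the corollary then falls out of the main unit-sphere character formula $g(x,x)=i(x)$ established earlier in the paper.

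Concretely, I would take the vertex set to be the primes $p\leq n$ and declare a non-empty subset $\sigma$ of primes to be a simplex of $G$ exactly when the product $\prod_{p\in\sigma} p$ is a square-free integer in $\{2,\dots,n\}$. First I would check that this is a simplicial complex, i.e.\ closed under non-empty subsets: if $\prod_{p\in\sigma}p\leq n$, then any non-empty subproduct is also $\leq n$ and square-free, so the corresponding subset of primes is again a simplex. Thus the simplices of $G$ are in canonical bijection with the square-free integers in $\{2,\dots,n\}$, where $\sigma\leftrightarrow m_\sigma=\prod_{p\in\sigma}p$.

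Next I would translate the two divisibility conditions into set-theoretic conditions on prime-factor sets. For square-free $a,b$, divisibility $a\mid b$ is equivalent to the prime factors of $a$ forming a subset of those of $b$; therefore the edges of $G_n$ (connecting $a,b$ whenever $a\mid b$ or $b\mid a$) coincide exactly with the edges of $G_1$ (connecting simplices when one is contained in the other). Similarly $\gcd(a,b)>1$ is equivalent to the prime factor sets of $a$ and $b$ having non-empty intersection, so the edges of $H_n$ coincide with those of the connection graph $G'$. Hence $G_n=G_1$ and $H_n=G'$ as graphs on the same vertex set (the square-free integers in $\{2,\dots,n\}$).

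Given these identifications, the corollary is immediate from the unit-sphere character formula $i(x)=g(x,x)$ stated in the abstract and reproved in the body: applying it to the complex $G$ just constructed yields
\[
1-\chi_{G_n}(S(x))\;=\;i(x)\;=\;g(x,x)\;=\;(1+A_{H_n})^{-1}_{xx}
\]
for every vertex $x$, which is exactly the claim. The only delicate point, and the one I would double-check carefully, is the bookkeeping that $G$ really is a simplicial complex when restricted by the bound $n$; everything else is a direct translation of the arithmetic relations $a\mid b$ and $\gcd(a,b)>1$ into the subset and intersection relations on sets of prime divisors.
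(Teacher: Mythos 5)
Your proof is correct and follows essentially the same route as the paper: both construct the simplicial complex on the primes whose simplices are the subsets with square-free product in $\{2,\dots,n\}$, verify closure under non-empty subsets, identify $G_n$ with the Barycentric refinement and $H_n$ with the connection graph, and then invoke the unit-sphere character formula $i(x)=g(x,x)$. Your write-up is in fact somewhat more explicit than the paper's about translating $a\mid b$ and $\gcd(a,b)>1$ into the subset and intersection relations.
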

\begin{proof}
Given $n$, let $P=\{p_1, \dots, p_k \}$ be the primes which appear
as factors of square free integers in $V_n=\{2, \dots, n\}$. 
This defines now a abstract finite simplicial complex $G$. The sets
are the subsets of $G_n$ such that their product is in $V_n$. 
Because for any given $A$ and any non-empty subset $B$ of $A$
also the product of the elements in $B$ is in $V_n$, this is indeed
an abstract finite simplicial complex. The Barycentric refinement $G_1$
of $G$ is now the graph $G_n$ (there should be no confusion since $n >1$
prevents the Barycentric refinement $G_1$ to be mixed up with $G_n)$.
The connection graph $G'$ is the graph $H_n$. 
\end{proof}

\begin{figure}[!htpb]
\scalebox{0.4}{\includegraphics{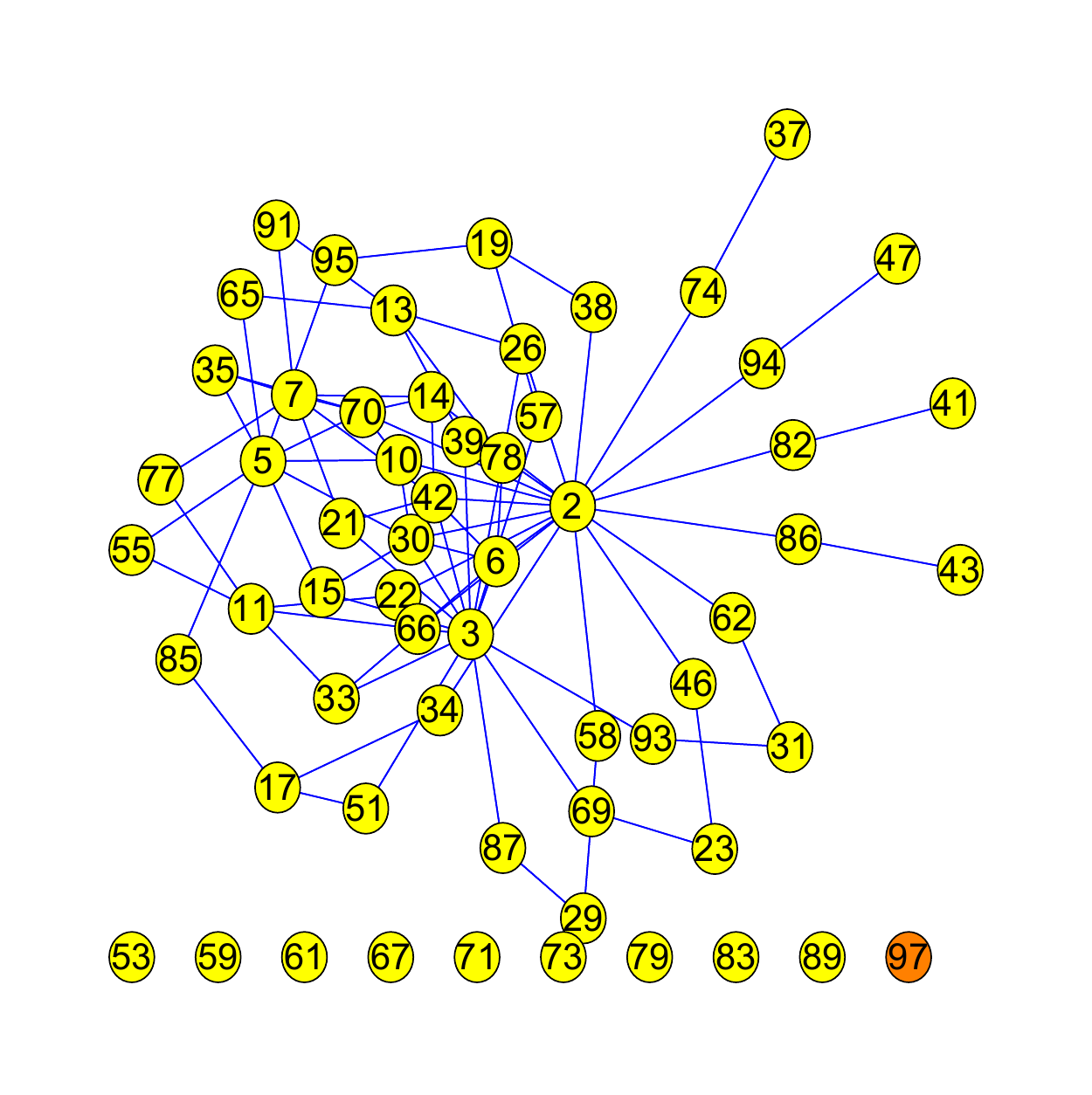}}
\scalebox{0.4}{\includegraphics{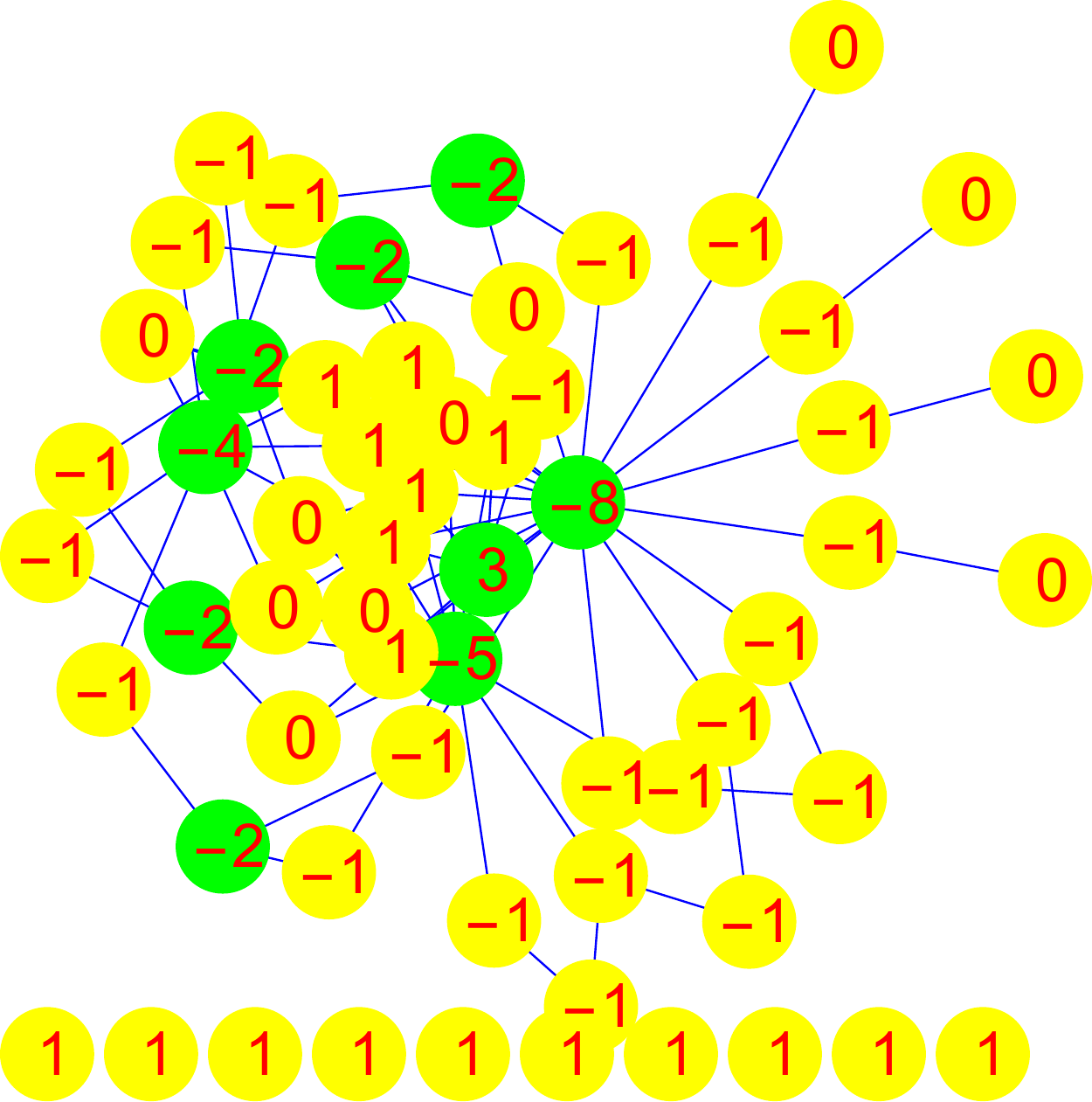}}
\caption{
We see the prime graph $G_{100}$, first with the vertex labels given by 
square free integers. In the picture to the right we see the 
sphere spectrum $i(x) = 1-\chi(S(x))$. For primes $x$, we have $i(x)=1$. 
The unit sphere of $x=2$ is a graph 
with vertex set $\{ 6, 10, 14, 22, 26, 30,$ $34, 38, 42, 46, 58,$ 
$62, 66, 70, 74,$  $78, 82, 86, 94 \}$, 
which has $v_0=10$ connectivity components. There is furthermore
a nontrivial loop $6,42,14,70,10,30,6$ in the unit sphere
which gives a non-trivial fundamental group and $b_1=1$. There are 
no triangles in $S(x)$ so that $\chi(S(x))=1-(10-1)=-8$. 
\label{primegraph}
}
\end{figure}

We can wonder now how to estimate the Green function values from above
or from below for the graphs $G_n$.

\section{Graph arithmetic}

The multiplicative monoid
$(\mathbb{N}, \cdot) = (\{ 1,2,3, \dots, \}, \cdot)$ has the unique
factorization property, telling that every non-unit can be factored 
uniquely into a product of prime numbers. It is a unique factorization monoid.
More generally, the multiplicative group of any 
unique factorization domain is a unique factorization monoid. 
The additive monoid of $\{ 0,1,2 3, \dots, \}$ also has unique
prime factorization but it is trivial since only $1$ is an additive
prime and any number $n$ naturally can be decomposed as $n=1+1+1+\dots +1$. 
The question of prime factorization can be extended to networks. 
It turns out that there is both an addition and multiplication which are
compatible by distributivity. We work here with graphs even so one can 
take more general simplicial complexes. \\

The set $\mathbb{G}$ of finite simple graphs $G=(V,E)$ becomes 
an additive monoid with the join operation $G  +  H$. The definition
is as follows: we have $V(G  +  H) = V(G) \cup V(H)$ and 
$E(G  +  H) = E(G) \cup E(H) \cup \{ (a,b) \; | \; a \in V(G), b \in V(H) \}$. 
The $0$ element is the empty graph $(\emptyset,\emptyset)$. Adding one to $G$ 
is the cone over $G$ and adding the $0$-sphere $P_2$ to $G$ is the 
suspension of $G$. \\

The maximal dimension ${\rm \overline{{\rm dim}}}(G)$ of a graph $G$ 
is the dimension of the largest complete subgraph $K_{d+1}$
which appears in $G$. The number ${\rm \overline{{\rm dim}}}(G) + 1$ 
is also called the clique number. The relation $\overline{{\rm dim}}(H  +  K)
= \overline{{\rm dim}}(H) + \overline{{\rm dim}}(H) + 1$ follows from 
$K_n + K_m = K_{n+m}$.  \\
(Less obvious is the same inequality for the inductive dimension ${\rm dim}(G)=
\sum_{v \in V(G)} [1+{\rm dim}(S(x))]/|V|$):
$$ {\rm dim}(G + H) \geq {\rm dim}(G) + {\rm dim}(H) + 1 \;. $$ 

Lets call a graph $G$ an additive prime graph if it can not be written as
$G=H  +  K$ with $H,K$ different from the $0$ element. An example of 
an additive prime graph is $K_1$. All the $K_n$ can be factored as 
$K_n = K_1  +  K_1  +  K_1 
= K_1^n$ and are therefore not prime. A star graph $S_n$ can be factored as $P_n  +  K_1$
where $P_n$ is the graph with $n$ vertices and no edges. 

\begin{lemma}
Every finite simple graph $G$ can be decomposed into additive 
prime factors $G = p_1 + \cdots + p_n$, where $p_i$ are prime graphs. 
\end{lemma}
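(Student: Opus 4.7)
The plan is to prove existence by strong induction on the vertex count $|V(G)|$, since the lemma only asserts existence of a prime factorization, not uniqueness (indeed the discussion preceding the statement explicitly leaves uniqueness open).

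For the base case I take $|V(G)|=1$, so $G=K_1$, which is explicitly noted in the text to be additively prime; hence $G=K_1$ is already a prime factorization with a single factor. (The empty graph $0$, if one admits it, is the neutral element and plays the role of the empty product.)

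For the inductive step, suppose the result holds for every graph with strictly fewer vertices than $G$, and assume $|V(G)|\ge 2$. If $G$ itself is an additive prime, there is nothing to do, for $G=G$ is the required factorization. Otherwise, by the definition of additive prime, there exist graphs $H,K$, neither equal to the $0$ element, with $G=H+K$. From the definition of the Zykov join we have $V(G)=V(H)\sqcup V(K)$, so $|V(H)|,|V(K)|\ge 1$ and therefore $|V(H)|,|V(K)|<|V(G)|$. By the inductive hypothesis we may write $H=p_1+\cdots+p_r$ and $K=q_1+\cdots+q_s$ with each $p_i,q_j$ an additive prime graph. Associativity of the join (established together with commutativity in the earlier lemma "Joins preserve Spheres" and discussed explicitly in Section 3) allows us to concatenate these two expressions to obtain $G=p_1+\cdots+p_r+q_1+\cdots+q_s$, a decomposition of $G$ into additive primes.

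The only thing one needs to verify carefully is that the recursion actually terminates, which is why I use the numerical quantity $|V(G)|$ rather than, say, $\overline{\dim}(G)$: strict descent in vertex count is automatic once we know that a nontrivial factor of the join has at least one vertex. Apart from invoking associativity of $+$, no deeper structural input is required, and in particular no appeal to spheres, Euler characteristic, or unimodularity is needed. The genuinely hard question, which this lemma deliberately sidesteps, is whether such a factorization is unique up to reordering; that is the open problem flagged in the introduction and is not what we prove here.
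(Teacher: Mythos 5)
Your proof is correct and takes essentially the same approach as the paper's: both arguments proceed by induction on a nonnegative integer quantity that strictly decreases whenever a nontrivial join factorization $G=H+K$ is extracted, splitting off factors until only additive primes remain. The sole difference is the choice of measure---you descend on $|V(G)|$ using that the vertex set of a join is the disjoint union of the factors' vertex sets, while the paper descends on the maximal dimension via $\overline{\dim}(H+K)=\overline{\dim}(H)+\overline{\dim}(K)+1$; both terminate for the same reason, and your version is marginally more self-contained.
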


\begin{proof}
If there is a factor $H$, then its maximal dimension has to be smaller. 
By induction, we can then break up every factor into smaller factors or leave it
if it if it is an additive prime. 
\end{proof} 

\begin{lemma}
There are many additive prime graphs: every disconnected graph is an additive
prime, every non-contractible graph with a prime volume is an additive prime. 
\end{lemma}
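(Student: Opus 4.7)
The plan is to dispatch the two claims separately, exploiting the defining feature of the Zykov join: across the two summands every possible edge is present.

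For the first claim, that every disconnected graph is an additive prime, I would argue by contrapositive. Suppose $G = H + K$ with both $V(H)$ and $V(K)$ nonempty, and fix any $h \in V(H)$ and $k \in V(K)$. By the definition of the join the edge $(h,k)$ is present, every other vertex of $H$ is adjacent to $k$, and every other vertex of $K$ is adjacent to $h$. Thus all vertices lie in one connected component, so $G$ is connected. Contrapositively, a disconnected $G$ admits no nontrivial join decomposition and is therefore additively prime.

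For the second claim I would first record that the number of maximal simplices (the \emph{volume}) is multiplicative under the join: $v(H+K) = v(H)\, v(K)$. This is the same multiplicativity that was invoked in the introduction for the sphere monoid, and it holds because every facet of $H+K$ is the disjoint union of a facet of $H$ with a facet of $K$, all cross-edges being present. Now assume $v(G) = p$ is prime and $G = H + K$ with both factors nonempty; then $v(H)\, v(K) = p$ forces $v(H) = 1$ after renaming. A Whitney complex with a unique facet $\sigma$ must have $V(H) = \sigma$, since every vertex must lie in some facet, so $H = K_m$ with $m = |\sigma| \geq 1$. Then $G = K_m + K = K_1 + (K_{m-1} + K)$ is a cone, hence contractible, contradicting the hypothesis.

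The one step that deserves care is the claim that any cone $K_1 + G'$ is contractible in the discrete sense used throughout the paper. I would establish this by induction on the vertex count: the cone vertex $c$ lies in the unit sphere of every other vertex $v$, so $S(v)$ itself contains $c$ as a universal vertex and is thus also a cone, hence contractible by the induction hypothesis. Removing such a $v$ by the standard discrete collapse preserves the homotopy type, and iterating eventually leaves only $c$. With multiplicativity of $v$ and cone-contractibility in hand, both bullets of the lemma follow; the facet-multiplicativity identity $v(H+K)=v(H)v(K)$ is the only combinatorial computation to check, and it is essentially the same product formula already used for prime spheres in the monoid.
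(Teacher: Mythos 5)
Your proof is correct, and for the first claim it coincides with the paper's: a nontrivial join is connected because every cross-pair is an edge, so disconnected graphs are prime. For the second claim your route is genuinely different from (and more complete than) what the printed proof does. The paper's proof of this lemma never actually addresses the prime-volume statement: after the connectedness remark it switches to showing that $C_n$ for $n>4$ and the point graphs $P_k$ are prime, via the observation that a factorization of a one-dimensional graph forces both summands to have maximal dimension $0$, and then listing the few joins $P_k+P_l$ that stay one-dimensional. The prime-volume criterion is only asserted elsewhere in the paper (in the introduction and in Example~1 after the $f$-generating-function proposition) from the multiplicativity of the facet count. You supply the missing chain: $v(H+K)=v(H)v(K)$ because a clique of the join is maximal exactly when both of its traces are maximal; a prime value forces one factor to have a single facet, hence to be a complete graph $K_m$; and then $G=K_m+K=K_1+(K_{m-1}+K)$ is a cone, hence contractible --- which is precisely where the non-contractibility hypothesis is consumed (and which shows that hypothesis cannot be dropped, since e.g.\ $K_1+P_2$ has prime volume $2$ but is not prime). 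Your inductive proof that cones are contractible is sound, though you could also have quoted the paper's own assertion, used in the sphere-join lemma, that the join of a contractible graph with anything is contractible. What your version buys is an actual proof of the stated lemma; what the paper's version buys is the additional example class $C_n$, $n>4$, which is not covered by the prime-volume criterion since $n$ need not be prime there.
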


\begin{proof}
The join of two graphs is connected so that a disconnected graph must be prime.
Every circular graph $C_n$ with $n>4$ or every point graph $P_k$ is prime. 
For the circular graph, one can see that if $C_n = K  +  H$, 
then both $K,H$ have to have maximal dimension $0$. But we can list $P_1  +  P_1=K_2$,
$P_1  +  P_2 = L_3$, $P_2  +   P_2 = C_4$. Any other $P_k  +  P_l$ contains
a star graph $S_k$ or $S_l$. If both $k,l>2$, then this is incompatible with $C_n$. 
\end{proof}

As we can list all one-dimensional graphs which are not prime we see that most
one dimensional graphs are prime. For example, every tree which is not a star graph
is prime. \\

As in the case $\mathbb{N}$, the question about unique prime factorization is more
difficult than the question of the existence of a factorization. 
Experiments indicate that graphs form a unique factorization monoid: still the question
remains: 
is it true that every graph has a unique factorization $G=p_1 + \cdot + p_n$ 
into prime graphs $p_k$. \\

Note that this is not true for the join in the standard topology 
case as if we take the join of 
a $k$-sphere and a $m$-sphere, we get a $k+m+1$ sphere. So, we can for example
take a $3$-sphere and add a $4$-sphere, which gives topologically
the same than adding a $2$-sphere to a $5$ sphere. In each case, we get 
a $8$-sphere. In the discrete $C_4=P_2 + P_2$ but $C_5$ can not be written as a sum.  \\

Here is a formula which gives the $f$-vector for the join of two complexes. 
Given a simplicial complex with $v_k$ simplices of dimension $k$. 
Define the extended $f$-vector generating function  = extended Euler polynomial of $G$ as  
$$   f_G(x)= 1+\sum_{k=0} v_k x^{k+1} \; . $$
The empty graph has the generating function $f_G(x)=1$.
(Originally, I had defined $f_G(x)=1/x+\sum v_k x^k$ and defining the multiplication as
$x f_G(x) g_G(x)$ but June-Hou Fung pointed out to me that a multiplication with $x$ 
simplifies the generating function.)

\begin{propo} The join $G + H$ of two complexes has the
$f$-generating function of the form 
$$   f_{G + H} = f_G f_H \; . $$
\end{propo}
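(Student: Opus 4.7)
The plan is to verify the identity by counting simplices of $G+H$ according to where their vertex sets live, then match the result against the coefficients of the product $f_G f_H$.

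First I would classify the simplices of $G+H$. Since every vertex of $G$ is joined to every vertex of $H$ in the Zykov sum, any set of the form $\sigma \cup \tau$ with $\sigma$ a simplex of $G$ and $\tau$ a simplex of $H$ is a clique in $G+H$, and conversely every simplex $\rho$ of $G+H$ decomposes uniquely as $\rho = (\rho \cap V(G)) \cup (\rho \cap V(H))$. Allowing either part to be empty, this gives a bijection between nonempty simplices of $G+H$ and pairs $(\sigma,\tau)$ of simplices of $G$ and $H$ that are not both empty, where we formally permit the empty simplex on either side.

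Next I would track dimensions. If $\dim(\sigma)=i$ and $\dim(\tau)=j$ (with the empty simplex assigned dimension $-1$), then $|\sigma \cup \tau| = (i+1)+(j+1)$, so $\dim(\sigma \cup \tau) = i+j+1$. Summing over all admissible pairs, the number of $d$-dimensional simplices of $G+H$ is
\[
v_d(G+H) \;=\; v_d(G) + v_d(H) + \sum_{\substack{i,j \geq 0 \\ i+j = d-1}} v_i(G)\, v_j(H),
\]
where the first two summands correspond to the cases $\tau=\emptyset$ and $\sigma=\emptyset$, respectively.

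Finally I would expand the product
\[
f_G(x)\, f_H(x) \;=\; \Bigl(1+\sum_{i\geq 0} v_i(G)\, x^{i+1}\Bigr)\Bigl(1+\sum_{j\geq 0} v_j(H)\, x^{j+1}\Bigr)
\]
and read off the coefficient of $x^{d+1}$: the cross term $1 \cdot v_d(G) x^{d+1}$ and its twin contribute $v_d(G)+v_d(H)$, while the bilinear term contributes $\sum_{i+j=d-1} v_i(G) v_j(H)$. This matches the count above, so $f_{G+H}=f_G f_H$. The constant $1$ in the definition of $f_G$ is exactly what bookkeeps the empty simplex on each side and makes the relation multiplicative; checking that the constant terms also agree (both equal $1$) completes the proof. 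There is no real obstacle here beyond making the empty-simplex convention explicit, which is precisely what the ``$+1$'' in the generating function was designed to encode.
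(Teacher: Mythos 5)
Your proof is correct and follows the same idea as the paper's (one-sentence) argument: simplices of $G+H$ correspond to pairs of simplices from $G$ and $H$ with dimensions adding plus one, which is exactly what the shifted generating function encodes. You simply spell out the bijection, the empty-simplex bookkeeping, and the coefficient comparison in full detail.
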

\begin{proof}
Given a simplex $x$ in $G$ and a simplex $y$ in $H$, this
produces a simplex $x + y$ of dimension ${\rm dim}(x)+{\rm dim}(y) +1$. 
\end{proof} 

This formula can shed light on the unique prime factorization problem.
If $G+H$ can be factored, then its rational function can be factored in the 
above sense. We have then for example
$$  f_{p_1+p_2+p_3} = f_{p_1} f_{p_2} f_{p_3} \; . $$
But even if we have a unique factorization algebraically in the polynomials
with multiplication $f g(x) = f(x) g(x)$, we still have the problem to see
whether the factors can be realized as simplicial complexes and furthermore
require that the factors have positive entries. \\

Since $\chi(G) = 1+f(-1)$. We have $\chi(G + H ) = 1+(-1) f_G(-1) f_H(-1)
= 1-(\chi(G)-1)(\chi(H)-1) = \chi(G) + \chi(H) - \chi(G) \chi(H)$: \\

\begin{coro} 
The Euler characteristic of the join of two complexes satisfies 
$$  \chi(G+H) = \chi(G) + \chi(H) - \chi(G) \chi(H)  \; . $$
\end{coro}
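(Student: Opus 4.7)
The plan is to derive this as a direct consequence of the product formula $f_{G+H} = f_G \, f_H$ from the preceding proposition, by evaluating the generating function at $x=-1$ and relating the result to Euler characteristic. The key identity to establish is
\[
f_G(-1) = 1 - \chi(G),
\]
which just unpacks the definition $f_G(x) = 1 + \sum_{k \geq 0} v_k x^{k+1}$: substituting $x=-1$ gives $f_G(-1) = 1 + \sum_k v_k (-1)^{k+1} = 1 - \sum_k (-1)^k v_k = 1 - \chi(G)$, since $\chi(G) = \sum_k (-1)^k v_k$ by definition of Euler characteristic.

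Next I would apply the product formula at $x=-1$. This gives
\[
1 - \chi(G+H) = f_{G+H}(-1) = f_G(-1) \, f_H(-1) = (1-\chi(G))(1-\chi(H)),
\]
and expanding the right-hand side yields
\[
1 - \chi(G+H) = 1 - \chi(G) - \chi(H) + \chi(G)\chi(H),
\]
from which the claim $\chi(G+H) = \chi(G) + \chi(H) - \chi(G)\chi(H)$ follows immediately by subtracting $1$ and negating.

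There is no serious obstacle: the proof is a one-line substitution once the generating function identity of the previous proposition is in hand. The only thing worth noting for the reader is the equivalent, cleaner reformulation: if we set $i(G) = 1 - \chi(G)$, the corollary says $i(G+H) = i(G)\, i(H)$, i.e.\ $i$ is a monoid homomorphism from $(\mathrm{Graphs},+,0)$ to $(\mathbb{Z},\cdot,1)$. This reformulation is exactly the multiplicativity of the Poincaré--Hopf functional $i$ that was advertised in the introduction, so I would mention it in passing to connect the corollary to the larger narrative about characters on the sphere group.
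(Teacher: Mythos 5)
Your proof is correct and is essentially identical to the paper's: the paper also evaluates the product formula $f_{G+H}=f_Gf_H$ at $x=-1$ and uses $\chi(G)=1-f_G(-1)$ to get $1-\chi(G+H)=(1-\chi(G))(1-\chi(H))$ (the paper's line ``$\chi(G)=1+f(-1)$'' is a sign typo, as its subsequent computation shows). Your closing remark about the multiplicativity of $i(G)=1-\chi(G)$ is likewise exactly the corollary the paper draws next.
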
 

Of especial interest is the Euler-Poincar\'e functional $i(G) = -f(-1) = 1-\chi(G)$. 
(Again, it was June-Hou Fung who pointed out to me that the above corollary implies that $1-\chi(G)$ is
multiplicative). From the above formula we see that it behaves like a multiplicative character on the group 
of graphs. However, since we have to define $i(-G)=1/i(G)$ to extend it to the entire group,
we can not define $i(-G)$ for graphs $G$ which have $i(G)=0$. We see especially that $i$ is not
defined on the ``integers" $\mathbb{Z}$ given by the graphs $K_n$ and their negatives $-K_n$ as $i(K_n)=0$. 

\begin{coro}
For any two simplicial complexes $G,H$, the functional $i(G)=1-\chi(G)$ satissfies
$$  i(G+H) = i(G) i(H) \; . $$
\end{coro}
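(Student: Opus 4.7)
The plan is to derive this corollary directly as an algebraic rearrangement of the preceding corollary on $\chi(G+H)$, together with the definition $i(G)=1-\chi(G)$. No fresh combinatorial input is needed: the multiplicativity of $i$ is essentially a restatement, in the variable $i=1-\chi$, of the inclusion-exclusion-like formula already established for $\chi$ on joins.

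First, I would invoke the corollary that $\chi(G+H)=\chi(G)+\chi(H)-\chi(G)\chi(H)$, which itself follows from the generating-function identity $f_{G+H}=f_G f_H$ and the relation $\chi(G)=1+f_G(-1)$ proved just above. Substituting into the definition of $i$, I would write
\begin{equation*}
i(G+H) = 1-\chi(G+H) = 1 - \chi(G) - \chi(H) + \chi(G)\chi(H).
\end{equation*}
Then I would simply factor the right-hand side as
\begin{equation*}
1-\chi(G)-\chi(H)+\chi(G)\chi(H) = (1-\chi(G))(1-\chi(H)) = i(G)\, i(H),
\end{equation*}
which is exactly the claim.

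There is essentially no obstacle: the entire content of the corollary has been absorbed into the generating-function proposition $f_{G+H}=f_G f_H$ and the elementary identity $\chi=1+f(-1)$. The only thing worth noting explicitly in the write-up is that this multiplicativity is what allows $i$ to be regarded as a (generalized) character on the Zykov join monoid, consistent with the discussion in the introduction where $i$ is extended to the Grothendieck group by setting $i(-G)=1/i(G)$ whenever $i(G)\neq 0$; this extension is well defined precisely because of the multiplicativity just established. So the proof is a one-line algebraic consequence of the previous corollary.
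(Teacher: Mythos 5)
Your proof is correct and matches the paper's own route exactly: the paper derives this corollary as an immediate algebraic consequence of $\chi(G+H)=\chi(G)+\chi(H)-\chi(G)\chi(H)$ (itself obtained from $f_{G+H}=f_Gf_H$), by factoring $1-\chi(G)-\chi(H)+\chi(G)\chi(H)=(1-\chi(G))(1-\chi(H))$. Nothing is missing.
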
 

While $i(G)$ can be zero, we can look at the subgroup of the join group for which $i(G) \neq 0$. 
This is still a group. 

\begin{coro} 
The monoid of graphs with join $+$ has the following sub-monoids: \\
a) the submonoid of graphs with even $\chi(G)$  \\
b) the submonoid of graphs with odd $\chi(G)$  \\
c) the submonoid of graphs with zero $\chi(G)$ \\
d) the submonoid of graphs with $\chi(G) \in \{0,2\}$ \\
e) the submonoid of sphere graphs \\
f) the submonoid of graphs with $i(G) \neq 0$. 
\end{coro}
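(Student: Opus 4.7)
The plan is to deduce each of parts (a)--(d) and (f) from the multiplicativity $i(G+H) = i(G)i(H)$ established in the previous corollary, and to deduce (e) from the earlier lemma asserting that joins preserve spheres. The central observation is that $i:(\mathbb{G},+) \to (\mathbb{Z},\cdot)$ is a monoid homomorphism sending the empty graph (the identity for $+$) to $i(\emptyset) = 1$, so the preimage under $i$ of any multiplicatively closed subset of $\mathbb{Z}$ that contains $1$ is automatically a submonoid of $(\mathbb{G},+)$.

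First I would rewrite each of (a)--(d), (f) as a condition on $i(G)=1-\chi(G)$ rather than on $\chi(G)$. Condition (a) becomes $i(G)$ odd; (b) becomes $i(G)$ even; (c) becomes $i(G)=1$; (d) becomes $i(G) \in \{-1,+1\}$; and (f) is already of this form. In each case the target set is closed under multiplication in $\mathbb{Z}$ (odd times odd is odd; even times anything is even; $\{1\}$ and the unit group $\{-1,+1\}$ are closed under multiplication; nonzero times nonzero is nonzero), so closure of the corresponding family of graphs under $+$ follows from a single application of $i(G+H)=i(G)i(H)$. To upgrade closure to the submonoid property, I check that $i(\emptyset) = 1$ lies in the target set; this holds for (a), (c), (d), (f). For (b) the empty graph has $\chi = 0$ which is even, so strictly speaking one obtains only a sub-semigroup; I would either flag this or reinterpret (b) as including the empty graph by convention.

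Part (e) is the one piece that does not come from the character property. Closure of spheres under $+$ is exactly the content of the earlier lemma on joins preserving spheres, and the empty graph is the $(-1)$-sphere and serves as the identity, so the spheres form a genuine submonoid with no extra work.

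I do not expect a substantive obstacle here; the corollary is essentially a dictionary translating each numerical condition on $\chi$ into a multiplicative condition on $i$, then invoking the already-proved character identity. The only minor subtlety worth noting in the write-up is the empty-graph caveat in (b), which is a matter of convention rather than content.
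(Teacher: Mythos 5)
Your proof is correct and follows the same route as the paper, which likewise derives (a)--(d) and (f) from the multiplicativity $i(G+H)=i(G)i(H)$ (equivalently the sum formula for $\chi$) and (e) from the lemma that joins preserve spheres; you simply make the case-checking explicit where the paper leaves it as a one-line remark. Your observation that case (b) excludes the empty graph (since $\chi(\emptyset)=0$ is even) and so is strictly only a sub-semigroup is a fair refinement the paper does not address.
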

\begin{proof}
This all follows from the sum formulas for $\chi$ and $i$ as 
well as the fact that the join leaves spheres invariant. 
\end{proof}

\begin{coro} 
$i(G)$ can be extended to a multiplicative character on the additive 
group of spheres. 
\end{coro}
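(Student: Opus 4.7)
The plan is to combine two ingredients already available in the paper: the multiplicativity $i(G+H)=i(G)i(H)$ from the previous corollary, and the fact that the sphere monoid $(\mathrm{Spheres},+,0)$ embeds as a submonoid of the Grothendieck sphere group constructed earlier via equivalence classes $A-B$ with $A+D+K = B+C+K$. A multiplicative map on the monoid extends uniquely to a character on the Grothendieck group provided its values are invertible, so the whole task reduces to showing $i(G)\in\{-1,+1\}$ for every sphere~$G$.

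The first step is the dimension formula $\chi(G)=1+(-1)^d$ for any $d$-sphere, which I would prove by induction on~$d$, starting with the base case $d=-1$ where $G=0$ and $\chi(0)=0=1+(-1)^{-1}$. For the inductive step, pick any vertex $x$ of the $d$-sphere $G$. By the Evako definition $G\setminus x$ is contractible, so $\chi(G\setminus x)=1$, and $S(x)$ is a $(d-1)$-sphere. A short count of the simplices containing $x$ (namely $\{x\}$ and $\{x\}\cup\sigma$ for each simplex $\sigma\in S(x)$) gives $\chi(G)-\chi(G\setminus x)=1-\chi(S(x))=i(x)$, and the inductive hypothesis $\chi(S(x))=1+(-1)^{d-1}$ then yields $\chi(G)=1+(-1)^d$. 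Consequently $i(G)=1-\chi(G)\in\{-1,+1\}$ for every sphere.

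The second step is the extension. Given the submonoid homomorphism $i\colon(\mathrm{Spheres},+,0)\to(\mathbb{Z}^{*},\cdot,1)$, define on representatives $i(A-B):=i(A)i(B)^{-1}=i(A)i(B)$ (since $i(B)=\pm 1$). To check well-definedness, suppose $A+D+K=B+C+K$ for spheres $A,B,C,D,K$. Applying the monoid multiplicativity on both sides gives $i(A)i(D)i(K)=i(B)i(C)i(K)$, and since $i(K)\in\{-1,+1\}$ is a unit it cancels, leaving $i(A)i(D)=i(B)i(C)$, i.e.\ $i(A-B)=i(C-D)$. A direct check from the definition then shows $i\bigl((A-B)+(C-D)\bigr)=i(A+C)i(B+D)=i(A)i(C)i(B)i(D)=i(A-B)\,i(C-D)$, so $i$ is a homomorphism from the sphere group into $\{-1,+1\}$, i.e.\ a multiplicative character.

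I do not expect a serious obstacle; the only subtle point is that the extension depends on $i$ taking values in a group (not merely a monoid), and that is exactly what the computation $\chi(G)\in\{0,2\}$ for spheres buys us. The analogous attempt for the whole graph monoid fails precisely on complete graphs $K_n$ where $i(K_n)=0$, which is why the statement is restricted to the sphere subgroup.
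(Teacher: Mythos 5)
Your proposal is correct and follows essentially the same route as the paper: establish that $i$ takes values in $\{-1,+1\}$ on spheres and then extend to the Grothendieck group by $i(A-B)=i(A)/i(B)$. The only difference is one of detail — you actually prove $\chi(G)=1+(-1)^d$ for $d$-spheres by induction and verify well-definedness on equivalence classes, both of which the paper's one-line proof simply asserts.
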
 
\begin{proof}
It takes there the values $\pm 1$ on the monoid. Define $i(-G)=i(G)$. 
Now $i(G-H) = i(G)/i(H)$. The functional $i$ takes values in $\{-1,1\}$.  
\end{proof}

{\bf Examples.} \\
{\bf 1)} If $F=P_n,G=P_m$ are both $0$-dimensional point graphs, then 
$f(P_n) f(P_m) = (1+nx) (1+mx) = 1+nx+mx + nmx^2$ showing that
the graph $F + G$ has $nm$ edges and $n+m$ vertices and Euler characteristic
$n+m-nm$. Looking at the number of edges, we see immediately
that a $1$-dimensional graph with a prime number of edges must be prime.
More generally, any graph with prime volume must be prime. \\
{\bf 2)} If $G + H$ is $2$-dimensional, then one of the graphs, say $G$ has to be 
$0$-dimensional and so $f_G(x) = 1+nx$. If $f_H(x) = 1+ mx+kx^2$ is the extended
Euler polynomial of a $1$-dimensional graph. There are $f=kn$ triangles and
$e=k+mn$ edges and $v=m+n$ vertices.
We see also the Euler formula confirmed as $\chi(G+H) = \chi(G) + \chi(H) - \chi(G) \chi(H) 
= n+(m-k) - n(m-k)$ which agrees with $v-e+f$. \\
{\bf 3)} If $G,H$ are two spheres, where at least one has Euler characteristic $2$, 
then their product has Euler characteristic $2$. Only if both have Euler characteristic $0$, it is
possible that the product has Euler characteristic $2$. We can define the monoid of all graphs
with Euler characteristic in $\{0,2\}$. We could also look at the sub-monoid of all graphs
with even Euler characteristic, or the monoid with odd Euler characteristic, or the set
of graphs with $0$ Euler characteristic. These graphs do not change the Euler characteristic of the 
graph it is multiplied with. \\
{\bf 4)} Lets look at two $1$-spheres $C_n$ and $C_m$. Their product is a 3-sphere with 
$f_{C_n + C_m}(x) =  (1+nx+nx^2) (1+mx+mx^2)$. The new $f$-vector 
therefore is $(m+n,m+n+mn,2mn,mn)$ which has zero Euler characteristic 
as $\sum_k (-1)^k v_k=0$. \\

A special case of the unique prime factorization problem in the sphere monoid is the question 
whether any $3$-sphere is either prime or then a unique sphere of two $1$-spheres. In this
simple case, we can answer unique factorization: 
it boils down to the question whether $a+b=m+n, a b = m n$ implies $m=a,n=b$
or $m=b,n=a$ which is true. But this could be a case for the Guy law of small numbers. 
We could imagine for example that there would be a $7$-sphere which can be written
as $p_1+p_2+p_3$ with prime $2$-spheres $p_i$ but also be able to write it as 
$q_1+q_2$ as a sum of two prime $3$-spheres. We have just not seen such an example yet
and it probably does not exist. \\

As class field theory in rings of integers like $\mathbb{Z}[\sqrt{-5}]$ shows, 
many monoid belonging to rings of integers in function fields do not have a 
unique prime factorization and this could be the case here too. \\

{\bf Remarks.} \\
{\bf 1)} The join $G + H$ can be obtained within an extended Stanley-Reisner ring as follows.
If $V=\{ x_1, \dots, x_n \}$ and $W = \{ y_1, \dots , y_n \}$ represent the vertices
of the graph $G$ and $H$ and $f = 1 + \sum_I a_I x^I, g=1+\sum_J b_J x^J$ represent the 
complex for $G$ and $H$, then $f g$ represents the complex for $G + H$. For example,
if $f = 1+x+y+z+xy+yz+xz+xyz$ represents the Whitney complex of $G=K_3$ and 
$g = 1+a+b+ab$ represents the complex for $H=K_2$, then 
$fg = 1+x+y+z+a+b+xa+xb+ya+yb+za+zb+xya+xyb+xza+xzb+yza+yzb + xyza$ represents the complex $G + H$. \\
{\bf 2)} Trying to generalize a modular arithmetic, one could try to modify the join operation to
subgraphs of a given background graph $P$ and define the graph with $V(G  +  H) = V(G) \Delta V(H)$
and $E(G  +  H)$ containing all edges contained in $V(G) \Delta V(H)$ or then 
connecting two different vertices $a \in V(G) \cap V(G) \Delta V(H)$ 
with $b \in V(H) \cap V(G) \Delta V(H)$. Mathematically we would look at a Stanley-Reisner ring
with $Z_2$ valued coefficients over a fixed number of variables. We have not succeeded to get
finite rings like that. \\
{\bf 3)} Any bipartite graph is a subgraph of $P_n + P_m$ as by definition, we can split the vertices into
two disjoint subsets. The graph $P_n + P_m$ is the most extreme case. The utility graph $P_3 + P_3$ 
is the most famous example. \\

\begin{figure}[!htpb]
\scalebox{0.7}{\includegraphics{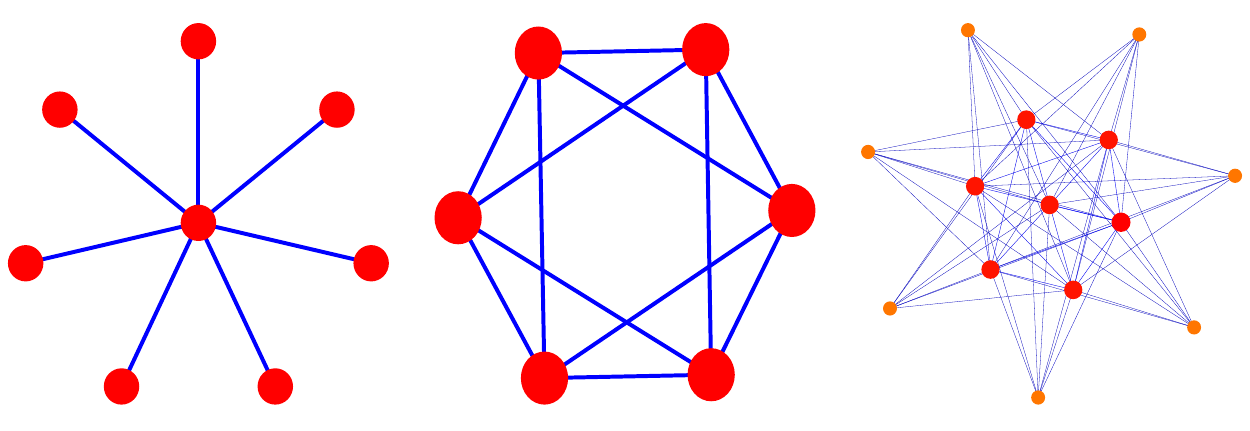}}
\caption{
We see the addition of a star graph $S_7$ and an octahedron
graph. Since the star graph is a cone $S_7 = P_7  +  K_1 = P_7+1$
and the Octahedron graph factors into ${\rm Oct} = K_2  +  C_4$
$= K_2 + K_2 + K_2 = K_3 \cdot K_2$,
the graph to the right has the additive prime decomposition
$P_7  +  K_1  +  K_2  +  K_2 + K_2$. In this case, the additive prime
decomposition is unique. In all cases we have seen so far, the composition
is unique, indicating that $(\mathbb{G}, + )$ could be a unique 
factorization monoid.  \label{factorization}
}
\end{figure}

Finally, lets look at the relation between the Fredholm characteristic:

\begin{coro}
On the sub monoid of graphs with even Euler characteristic, the
Fredholm characteristic is multiplicative: 
Then $\psi(G + H ) = \psi(G) \psi(H)$. This happens especially on spheres
so that on that group $\psi$ becomes a character when extended to negative 
graphs with $\psi(-G)=1/\psi(G)$. 
\end{coro}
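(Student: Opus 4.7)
The plan is to use the unimodularity identity $\psi(G) = \prod_{x \in G}(-1)^{\dim(x)}$ (stated in the abstract) and reduce everything to a parity count on simplices of the join $G+H$. The point is that, while $\det(1+A')$ on the connection graph looks hard to track across a join, the product form is completely combinatorial, and the simplex structure of $G+H$ is transparent.

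First I would enumerate the simplices of $G+H$: every simplex of $G$ and of $H$ survives, and for each pair $(\sigma,\tau)$ with $\sigma\in G$, $\tau\in H$, we get a new simplex $\sigma\cup\tau$ of dimension $\dim(\sigma)+\dim(\tau)+1$. Writing $n_G=|G|$, $n_H=|H|$, $S_G=\sum_{\sigma\in G}\dim(\sigma)$ and similarly $S_H$, this gives
\[
 \psi(G+H) \;=\; (-1)^{S_G}(-1)^{S_H}\prod_{\sigma\in G,\tau\in H}(-1)^{\dim(\sigma)+\dim(\tau)+1}
 \;=\; \psi(G)\psi(H)\,(-1)^{n_H S_G+n_G S_H + n_G n_H}.
\]
So $\psi(G+H)=\psi(G)\psi(H)$ is equivalent to the parity condition $n_H S_G + n_G S_H + n_G n_H \equiv 0 \pmod 2$.

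Next I would observe the arithmetic identity $n_G \equiv \chi(G) \pmod 2$: splitting simplices into $v_e$ even-dimensional and $v_o$ odd-dimensional ones gives $n_G=v_e+v_o$ and $\chi(G)=v_e-v_o$, which differ by $2v_o$. Hence if $\chi(G)$ and $\chi(H)$ are both even, so are $n_G$ and $n_H$, and each of the three summands $n_HS_G$, $n_GS_H$, $n_Gn_H$ is even. This proves multiplicativity on the submonoid of even-$\chi$ complexes. Specializing to spheres, a $d$-sphere has $\chi \in \{0,2\}$ by the recursion $\chi(G+H)=\chi(G)+\chi(H)-\chi(G)\chi(H)$ (established earlier), so the submonoid of spheres sits inside the even-$\chi$ submonoid and $\psi$ is multiplicative there.

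To extend $\psi$ to a character on the Grothendieck group of spheres, I would simply set $\psi(-G):=1/\psi(G)$; since unimodularity forces $\psi(G)\in\{-1,+1\}$, this is well-defined and makes $\psi$ a homomorphism into $\{\pm 1\}$ on the sphere group, which is exactly what ``character'' means here. The only real step to get right is the parity bookkeeping; once the reduction to $n_G\equiv\chi(G)\bmod 2$ is made, the argument is routine, and the obstacle is purely notational (making sure all three cross-terms are handled, not just $n_Gn_H$).
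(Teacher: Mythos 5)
Your argument is correct, and it is in fact more substantive than the route the paper takes. The paper's own ``proof'' merely enumerates the three sign combinations ($\psi(G)=\psi(H)=1$, both $-1$, mixed) and asserts the product in each case; no mechanism forcing those values is exhibited there. You instead derive the multiplicativity from the unimodularity identity $\psi(G)=\prod_{x}(-1)^{\dim(x)}$ together with the explicit simplex structure of the join ($\sigma\cup\tau$ has dimension $\dim\sigma+\dim\tau+1$), reducing everything to the parity of $n_HS_G+n_GS_H+n_Gn_H$ and then to the elementary congruence $n_G\equiv\chi(G)\pmod 2$; equivalently, writing $e,o$ for the counts of even- and odd-dimensional simplices, the correction factor is $(-1)^{e_Ge_H+o_Go_H}$, which is $+1$ exactly when $e_G\equiv o_G$ and $e_H\equiv o_H$, i.e.\ when both Euler characteristics are even. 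This both proves the corollary and shows precisely where evenness of $\chi$ enters, which the paper's case check does not. Two small caveats: your justification that spheres have even $\chi$ via the join recursion is slightly off target, since not every discrete sphere is a join; the clean statement is $\chi(S^d)=1+(-1)^d$, proved by induction on $d$ from the definition of $d$-sphere (the paper also takes this for granted in listing the submonoid $\chi\in\{0,2\}$). And for the character extension, note that $\psi(G)\in\{\pm1\}$ makes $\psi(-G):=1/\psi(G)=\psi(G)$, so well-definedness on Grothendieck classes follows from multiplicativity on the monoid, as you say.
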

\begin{proof}
Check cases: \\
If $\psi(G)=\psi(H)=1$, then $\psi(G*H)=1$.  \\
If $\psi(G)=\psi(H)=-1$, then $\psi(G*H)=1$. \\
If $\psi(G)=-\psi(H)=1$, then $\psi(G*H)=-1$. 
\end{proof}

\begin{coro}
In the sub-monoid of graphs with odd Euler characteristic, then $\psi(G+H) = -1$.
This implies that if $\psi(G)=1$, then $G$ is prime in that monoid. 
\end{coro}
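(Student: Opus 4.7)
The plan is to translate both assertions into a direct parity computation using the unimodularity formula $\psi(G) = \prod_{x \in G}(-1)^{\dim(x)}$ already established in the paper. Writing $a(G)$ for the number of even-dimensional simplices of $G$ and $b(G)$ for the number of odd-dimensional ones, we have $\psi(G) = (-1)^{b(G)}$, while $\chi(G) = a(G)-b(G)$ and $n(G) = a(G)+b(G)$ have the same parity. Hence the hypothesis that $\chi(G)$ and $\chi(H)$ are both odd forces exactly one of $a(G), b(G)$ to be odd, and likewise for $H$.

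The core step is to expand $b(G+H)$. A simplex of $G+H$ is either a simplex of $G$, a simplex of $H$, or a join $x+y$ with $\dim(x+y) = \dim(x)+\dim(y)+1$; such a join has odd dimension exactly when $\dim(x)$ and $\dim(y)$ share parity. This gives
\[
b(G+H) = b(G) + b(H) + a(G)a(H) + b(G)b(H),
\]
so $\psi(G+H) = \psi(G)\,\psi(H)\cdot(-1)^{a(G)a(H)+b(G)b(H)}$.

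Then I would run a four-case check on the residue classes modulo $2$. Take $(a(G),b(G)) \equiv (1,0)$ or $(0,1)$, and similarly for $H$. For both pairs $(1,0)$: $\psi(G)\psi(H) = 1$ and the exponent is $\equiv 1$; for both $(0,1)$: again $\psi(G)\psi(H)=1$ and the exponent is $\equiv 1$; the two mixed cases give $\psi(G)\psi(H) = -1$ with exponent $\equiv 0$. Every one of the four cases collapses to $\psi(G+H)=-1$.

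The primality consequence is then immediate: if $\psi(G)=1$ and $G = H+K$ with $H,K$ both of odd Euler characteristic, the previous step would force $\psi(G)=-1$, a contradiction, so no such factorization exists in the sub-monoid and $G$ is prime there. I do not expect a real obstacle beyond keeping the parity bookkeeping straight; the case check can alternatively be absorbed into the single identity $a(G)a(H)+b(G)b(H) = (n(G)n(H) + \chi(G)\chi(H))/2$ followed by a congruence mod $4$ when all four of $n(G),n(H),\chi(G),\chi(H)$ are odd.
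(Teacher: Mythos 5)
Your proof is correct, and it actually supplies the computation that the paper's own proof leaves implicit. The paper's argument is a bare three-line case check on the sign pairs $(\psi(G),\psi(H))$, asserting the value of $\psi(G+H)$ in each case without deriving the multiplication rule for $\psi$ under the join. You instead start from the unimodularity identity $\psi(G)=\prod_x(-1)^{\dim(x)}=(-1)^{b(G)}$ and prove the key counting formula
$$ b(G+H)=b(G)+b(H)+a(G)a(H)+b(G)b(H), $$
which is exactly right since a join simplex $x+y$ has dimension $\dim(x)+\dim(y)+1$; this yields $\psi(G+H)=\psi(G)\psi(H)(-1)^{a(G)a(H)+b(G)b(H)}$, and your four parity cases (using that odd $\chi=a-b$ forces exactly one of $a,b$ odd) each evaluate to $-1$, in agreement with the paper's three asserted cases. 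The primality conclusion is handled the same way in both: a factorization inside the odd-$\chi$ sub-monoid would force $\psi(G)=-1$. In short, the two arguments share the same case-analysis skeleton, but yours buys a genuine proof of the correction factor $(-1)^{a(G)a(H)+b(G)b(H)}$, which also explains the companion corollary on the even-$\chi$ sub-monoid; the paper's version buys brevity at the cost of presupposing that factor. Your closing identity $a(G)a(H)+b(G)b(H)=\bigl(n(G)n(H)+\chi(G)\chi(H)\bigr)/2$ checks out as well, though the mod-$4$ shortcut is only sketched and the explicit four-case check is the part of your write-up that actually carries the proof.
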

\begin{proof}
Check cases:  \\
If $\psi(G)=\psi(H)=1$, then $\psi(G+H)=-1$.  \\
If $\psi(G)=\psi(H)=-1$, then $\psi(G+H)=-1$. \\
If $\psi(G)=-\psi(H)=1$, then $\psi(G+H)=-1$. 
\end{proof}

{\bf Examples.} \\
{\bf 1)} Among complete graphs only $K_1$ has Fredholm characteristic $1$. All others
have Fredholm characteristic $-1$ and can therefore not be prime. \\
{\bf 2)} There is a two dimensional graph $G$ with the topology of the projective plane. 
Its $f$-vector is $(15,42,28)$ so that $\psi(G)=1$. Its Euler characteristic is $1$
too. Therefore, $G$ must be prime. \\
{\bf 3)} A tree with an even number of edges must be prime in the monoid of 
odd Euler characteristic graphs.  \\

When mentioning the arithmetic of graphs to An Huang, he 
asked me about relations of the spectrum of the join and the individual 
components. Here are three remarks: 

\begin{lemma}
If $G=H+K$ then there is an eigenvalue $|V(H)| + |V(K)|$ of the scalar Laplacian $L$. 
Consequently, any graph $G$ for which there is no eigenvalue $v_0(G)$ for $L$
is prime. 
\end{lemma}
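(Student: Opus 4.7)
The plan is to exhibit an explicit eigenvector of $L$ that is constant on each side of the join. Write $n=|V(H)|$ and $m=|V(K)|$, and assume both are positive (else the factorization is trivial). Define $f\colon V(G)\to\mathbb{Z}$ by $f(v)=m$ for $v\in V(H)$ and $f(w)=-n$ for $w\in V(K)$. This is a nonzero vector since $n,m\geq 1$, and I claim $Lf=(n+m)f$.

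The verification uses the defining property of the join: every vertex of $H$ is adjacent to every vertex of $K$, so the $G$-degree of $v\in V(H)$ is $\deg_H(v)+m$, and of $w\in V(K)$ is $\deg_K(w)+n$. For $v\in V(H)$,
\[
(Lf)(v)=(\deg_H(v)+m)\cdot m-\Bigl[\sum_{u\in V(H),\,u\sim v} m+\sum_{w\in V(K)}(-n)\Bigr]=m^2+mn=m(n+m),
\]
because the $H$-contribution $\deg_H(v)\cdot m$ cancels from the diagonal and off-diagonal parts, and only the $m$ cross-edges contribute. This equals $(n+m)f(v)$. A symmetric computation with the roles of $H$ and $K$ exchanged gives $(Lf)(w)=-n(n+m)=(n+m)f(w)$ for $w\in V(K)$. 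Hence $n+m=|V(H)|+|V(K)|=v_0(G)$ is an eigenvalue.

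The consequence follows by contrapositive: if $G$ admits no nontrivial join factorization then there is nothing to show, whereas any nontrivial $G=H+K$ forces $v_0(G)$ into the spectrum of $L$. So a graph whose Laplacian spectrum misses $v_0(G)$ must be additively prime in the Zykov monoid.

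The proof is essentially a direct calculation; the only subtlety is choosing the eigenvector with correct weights so that the $H$-internal and $K$-internal sums cancel, isolating the contribution from the complete bipartite ``cross'' edges of the join. There is no real obstacle: once one notices that any $f$ constant on each side has $(Lf)$ also constant on each side (the Laplacian descends to the $2\times 2$ quotient acting on $(a,b)$ as $(m(a-b),n(b-a))$), the eigenvalues $0$ and $n+m$ drop out immediately from the $2\times 2$ matrix $\left(\begin{smallmatrix} m & -m\\ -n & n\end{smallmatrix}\right)$, and $(m,-n)$ is the nontrivial eigenvector.
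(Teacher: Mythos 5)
Your proof is correct and takes essentially the same route as the paper: exhibit the vector that is constant $|V(K)|$ on $V(H)$ and $-|V(H)|$ on $V(K)$, and verify directly that it is a Laplacian eigenvector with eigenvalue $|V(H)|+|V(K)|$. If anything, your version is more careful than the paper's one-line argument, which states the two constants without the minus sign that is needed for the vector to be orthogonal to the constants and for the eigenvalue computation to close.
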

\begin{proof}
If $m=|V(H)|$ and $n=|V(K)|$, then the vector which 
is constant $n$ on $H$ and constant $m$ on $K$ is an eigenvector
to the Laplacian $L$ with eigenvalue $n+m$. The eigenvector is
perpendicular to the constant. 
\end{proof} 

\begin{coro}
The graph $n G= G+G+ \cdots + G$ has the eigenvalue $n |V(G)|$ with multiplicity
at least $n-1$. 
\end{coro}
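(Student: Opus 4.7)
The plan is to construct an explicit $(n-1)$-dimensional eigenspace for the eigenvalue $n|V(G)|$, generalizing the construction of the preceding lemma from two factors to $n$ factors at once. Label the copies of $G$ in $nG$ as $G_1,\ldots,G_n$, set $v=|V(G)|$, and consider vectors $f=\sum_{i=1}^n c_i \mathbf{1}_{G_i}$ that are constant $c_i$ on the $i$-th copy.

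The key step is to evaluate $(Lf)(x)$ for a vertex $x$ in $G_i$. Its degree in $nG$ is $\deg_G(x)+(n-1)v$, since the join adds all $(n-1)v$ vertices of the other copies as neighbors of $x$. When one writes out $Lf(x)=\deg(x)f(x)-\sum_{y\sim x}f(y)$ and uses that $f$ is constant on each copy, the internal contribution $\deg_G(x)\,c_i$ cancels between the diagonal term and the sum over in-copy neighbors. What remains is a clean expression of the form $nv\cdot c_i - v S$, where $S=\sum_j c_j$, which is independent of the vertex $x$ and of the internal edges of $G$.

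Restricting to the subspace $\{\,f:S=0\,\}$, which has dimension $n-1$ inside the $n$-dimensional space of copywise-constant functions, the residual $-vS$ vanishes and $Lf=nv\cdot f$. Hence every such $f$ is an eigenvector with eigenvalue $nv=n|V(G)|$, giving the claimed multiplicity of at least $n-1$.

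The only step requiring care is the degree bookkeeping in $nG$, separating within-copy $G$-edges from the new join-edges. This is the same idea as in the preceding lemma applied to $n$ factors simultaneously, and no real obstacle arises because the $G$-internal contributions cancel identically, leaving an expression depending only on the coarse quantities $c_i$, $S$, and $v$.
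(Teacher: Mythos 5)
Your computation is correct: for $x$ in copy $G_i$ one has $\deg_{nG}(x)=\deg_G(x)+(n-1)v$, the in-copy terms $\deg_G(x)\,c_i$ cancel, and $Lf(x)=nv\,c_i-vS$, so the hyperplane $\{\sum_i c_i=0\}$ of copywise-constant vectors is an $(n-1)$-dimensional eigenspace for the eigenvalue $nv=n|V(G)|$. The paper takes a slightly different route to the same eigenvectors: it applies the preceding two-factor lemma to each of the $n-1$ splittings $nG=kG+(n-k)G$, obtaining for each $k$ the vector that is constant $(n-k)v$ on the first block and $-kv$ on the second. Those $n-1$ vectors lie in, and in fact span, your subspace $S=0$, so the two arguments identify the same eigenspace; the difference is that the paper's one-line proof leaves the linear independence of its $n-1$ eigenvectors implicit (it does hold, but requires a small check), whereas your parametrization by $(c_1,\dots,c_n)$ with $\sum_i c_i=0$ makes the dimension count, and hence the multiplicity bound, immediate. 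Your version is therefore a bit more self-contained and tighter on exactly the point the paper glosses over, at the cost of redoing the degree bookkeeping rather than citing the lemma.
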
 
\begin{proof} 
The multiplicity follows because we can write the sum in $n-1$ different ways 
as a sum of two graphs $A+B$. The proof above determine then the eigenvector. 
\end{proof}

The example $K_n=K_1+K_1+ \dots K_1$, where $n$ appears with multiplicity $n-1$ is an extreme case. 
An other case is the $(n-1)$-dimensional cross polytop $n P_2 = P_2 + ... + P_2$ which has the eigenvalue
$2n$ with multiplicity $n-1$.  \\

Of interest is also the smallest non-zero eigenvalue $\lambda_2$ of the Laplacian. 

\begin{lemma}
$\lambda_2(G+H) = {\rm min}(|V(H)|,|V(K)|) + {\rm min}(\lambda_2(G),\lambda_2(H)$. 
\end{lemma}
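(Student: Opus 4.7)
My plan is to block-diagonalise the Laplacian of $G+H$ and simply read off its spectrum. Write $m=|V(G)|$ and $n=|V(H)|$. In the join each vertex of $G$ becomes adjacent to every vertex of $H$, so every $G$-degree increases by $n$, every $H$-degree by $m$, and the bipartite adjacency block is the all-ones matrix $J$. Hence
\[
L_{G+H} \;=\; \begin{pmatrix} L_G + n\,I_m & -J_{m\times n} \\ -J_{n\times m} & L_H + m\,I_n \end{pmatrix},
\]
and the problem reduces to analysing this matrix on convenient invariant subspaces.

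Next I would decompose $\mathbb{R}^{m+n} = U \oplus V \oplus W$ with $U=\{(u,0):u\perp \mathbf{1}_m\}$, $V=\{(0,v):v\perp \mathbf{1}_n\}$, and $W=\mathrm{span}\bigl((\mathbf{1}_m,0),(0,\mathbf{1}_n)\bigr)$. The decisive observation is $Ju=(\mathbf{1}_m^\top u)\,\mathbf{1}_n=0$ whenever $u\perp\mathbf{1}_m$, so the cross block annihilates $U$; hence $L_{G+H}$ acts on $U$ as $L_G+nI_m$ and contributes eigenvalues $\lambda+n$ with $\lambda$ ranging over the spectrum of $L_G\!\restriction_{\mathbf{1}_m^\perp}$, i.e.\ $\lambda_2(G),\ldots,\lambda_m(G)$. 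Symmetrically $V$ contributes $\mu+m$ for $\mu=\lambda_2(H),\ldots,\lambda_n(H)$. On the two-dimensional $W$, the global constant gives eigenvalue $0$, and its orthogonal direction $(n\mathbf{1}_m,-m\mathbf{1}_n)$ gives eigenvalue $m+n$, recovering the preceding lemma.

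Combining these contributions, the second smallest eigenvalue is
\[
\lambda_2(G+H) \;=\; \min\bigl(\lambda_2(G)+n,\ \lambda_2(H)+m,\ m+n\bigr).
\]
The third term is absorbed by the other two using the standard bound $\lambda_j(L)\le |V|$ for any simple graph (either from Gershgorin applied to $L=D-A$, or from the identity $L+\overline{L}=|V|\,I-J$ with $\overline{L}$ the Laplacian of the complement), giving $\lambda_2(G)+n\le m+n$ and similarly for the other side. This produces the asserted formula. The main obstacle is purely bookkeeping: one must verify that $U,V,W$ are pairwise orthogonal $L_{G+H}$-invariant subspaces whose direct sum exhausts $\mathbb{R}^{m+n}$, and that the bipartite $J$-block really annihilates all $\mathbf{1}^\perp$ components, so that the join's eigenvalue problem decouples cleanly into shifted copies of the individual Laplacian spectra on $\mathbf{1}^\perp$ plus the distinguished eigenvalue $m+n$.
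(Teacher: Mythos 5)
Your block decomposition is correct and is a genuinely different route from the paper's. The paper only brackets $\lambda_2(G+H)$: it gets an upper bound from Courant--Fischer applied to the zero-extended eigenvectors $v_2(G)$, $v_2(H)$, and a lower bound from edge-addition monotonicity (building $G+H$ from a disjoint union by adding edges, citing Spielman). Your approach instead diagonalizes $L_{G+H}$ exactly on the invariant subspaces $U\oplus V\oplus W$, and the verification you flag as ``bookkeeping'' does go through: $J$ annihilates $\mathbf{1}^\perp$, the diagonal blocks are $L_G+nI$ and $L_H+mI$, and $W$ contributes $0$ and $m+n$. This buys much more than the lemma asks for, namely the full Laplacian spectrum of the join,
\[
\mathrm{spec}\,L_{G+H}=\{0,\,m+n\}\cup\{\lambda_i(G)+n\}_{i\ge 2}\cup\{\lambda_j(H)+m\}_{j\ge 2},
\]
a classical fact that subsumes both of the paper's spectral lemmas at once.

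There is, however, a genuine error in your last step: $\min\bigl(\lambda_2(G)+n,\ \lambda_2(H)+m\bigr)$ is \emph{not} the asserted expression $\min(m,n)+\min(\lambda_2(G),\lambda_2(H))$. Writing $a=\lambda_2(G)$, $b=\lambda_2(H)$, the two agree exactly when the summand realizing $\min(a,b)$ also realizes $\min(m,n)$ of the \emph{other} graph's vertex count, i.e.\ when (say) $a\le b$ forces $n\le m$; otherwise the asserted right-hand side is strictly smaller than the true value. Concretely, take $G=P_2$ (two isolated vertices, $m=2$, $\lambda_2(G)=0$) and $H=C_5$ ($n=5$, $\lambda_2(H)=2-2\cos(2\pi/5)\approx 1.38$). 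Your formula gives $\lambda_2(G+H)=\min(0+5,\ 1.38+2)\approx 3.38$, which is the correct value, while the lemma's formula gives $\min(2,5)+\min(0,1.38)=2$. So your derivation is right and the printed formula is what is wrong; you should state the conclusion as $\lambda_2(G+H)=\min\bigl(\lambda_2(G)+|V(H)|,\ \lambda_2(H)+|V(G)|\bigr)$ rather than claiming it reproduces the lemma as stated. (Note also that the paper's own Courant--Fischer computation produces exactly your two Rayleigh quotients $\lambda_2(G)+n$ and $\lambda_2(H)+m$, and the passage from their minimum to $\min(m,n)+\min(\lambda_2(G),\lambda_2(H))$ is the unjustified step there as well.) Two minor points to tidy: handle the degenerate case $\min(m,n)=1$, where one of the subspaces $U,V$ is trivial and the corresponding $\lambda_2$ term is absent from the minimum; and your absorption of the $m+n$ term via $\lambda_2(G)\le m$ is fine but should note equality cases such as $G=K_m$.
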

\begin{proof}
The Courant-Fischer formula tells 
$\lambda_2 = {\rm inf}_{v \cdot 1 = 0} (v,Lv)/(v,v)$. 
Let $v_2(H)$ be the eigenvector of $L(H)$ with eigenvalue $\lambda_2(H)$
and $v_2(G)$ the eigenvector of $L(H)$ with eigenvalue $\lambda_2(G)$. 
Assume they are normalized. 
Extend $v_2(G)$ and $v_2(H)$ onto $G+H$ by putting $0$ on the other part. 
They still have norm $1$ on the product space. They are also perpendicular to
the constant vector. Now $(v_2(H),L v_2(H))=\lambda_2(H)+m$ and
$(v_2(G),L v_2(G)) = \lambda_2(G) + n$. This shows
$$\lambda_2(G+H) \leq  {\rm min}(|V(H)|,|V(K)|) + {\rm min}(\lambda_2(G),\lambda_2(H) \; . $$
On the other hand, since $G+H$ can be obtained from the disjoint union of 
$G$ and $P_n$ (the graph without edges), by adding edges, we have
$\lambda_i(G) + n \leq \lambda_i(G+H)$. 
Similarly $\lambda_i(H) + m \leq  \lambda_i(G+H)$.  
See Corollary 4.4.2. in \cite{Spielman2015}. 
\end{proof} 

Here is an other spectral result. It deals with the highest form Laplacian. 
Let $D=d+d^*$ be the Dirac operator of a simplicial complex. The form Laplacian $L=(d+d^*)^2$ 
splits into blocks called form Laplacians $L_k$, the restriction of $L$ on 
discrete $k$-forms. The nullity of $L_k$ is the $k$'th 
Betti number $b_k$ \cite{DiracKnill,knillmckeansinger}. We have already seen 
that if we have two graphs $G,H$, then the volume of $G+H$ is the product of the volumes
of $G$ and volumes of $H$. Lets write $L_v(G)$ for $L_{{\rm dim}(G)}(G)$ for the volume
Laplacian, the Laplacian belonging to the largest dimension. Now if $G$ has volume $m$
and $H$ has volume $n$ then we can look at the volume eigenvalues 
$\lambda_1, \dots, \lambda_n$ of $L_v(G)$ and the volume eigenvalues 
$\mu_1, \dots , \mu_m$ of $L_v(H)$. What are the volume eigenvalues of $H+G$? 

\begin{lemma}
For two arbitrary simplicial complexes $G,H$, 
the volume eigenvalues of $G+H$ are given by 
$\lambda+\mu$, where $\lambda$ runs over the volume eigenvalues of $G$
and $\mu$ runs over the volume eigenvalues of $H$. 
\end{lemma}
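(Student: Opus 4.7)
The plan is to exhibit a natural tensor-product decomposition of the top chain group of $G+H$, push the boundary operator through this decomposition, and read off the claim from the spectrum of a Kronecker sum of two commuting Laplacians. Let $p=\overline{\dim}(G)$ and $q=\overline{\dim}(H)$, so that the top dimension of $G+H$ is $p+q+1$. Every simplex of $G+H$ is the join of a (possibly empty) simplex of $G$ with a (possibly empty) simplex of $H$, and a dimension count shows that the top simplices are exactly the joins of a top simplex of $G$ with a top simplex of $H$. Using the standard simplex bases, this gives an isometric identification
$$C_{\mathrm{top}}(G+H)\;\cong\;C_{\mathrm{top}}(G)\otimes C_{\mathrm{top}}(H).$$

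Next I would analyze the top boundary $\partial\colon C_{\mathrm{top}}(G+H)\to C_{\mathrm{top}-1}(G+H)$. The codimension-one faces of a top simplex $\sigma\cup\tau$ split into two combinatorially distinct families: those obtained by omitting a $G$-vertex and those obtained by omitting an $H$-vertex. In the chain group these span orthogonal subspaces
$$V_1=C_{p-1}(G)\otimes C_q(H),\qquad V_2=C_p(G)\otimes C_{q-1}(H).$$
Ordering the vertices of $\sigma\cup\tau$ with the $G$-vertices first, a direct sign computation gives
$$\partial(\sigma\otimes\tau)\;=\;(\partial_G\sigma)\otimes\tau\;+\;(-1)^{p+1}\,\sigma\otimes(\partial_H\tau),$$
so $\partial$ decomposes as $(\partial_G\otimes I)\oplus\bigl((-1)^{p+1}I\otimes\partial_H\bigr)$ into $V_1\oplus V_2$, where $\partial_G,\partial_H$ are the top boundaries of $G$ and $H$.

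Because the two summands land in orthogonal subspaces the cross terms of $\partial^{*}\partial$ vanish identically, and the sign $(-1)^{p+1}$ squares away, yielding
$$L_v(G+H)\;=\;\partial^{*}\partial\;=\;L_v(G)\otimes I\;+\;I\otimes L_v(H).$$
The two tensor summands commute, so if $\{u_i\}$ and $\{v_j\}$ are orthonormal eigenbases with $L_v(G)u_i=\lambda_i u_i$ and $L_v(H)v_j=\mu_j v_j$, then $\{u_i\otimes v_j\}$ is an orthonormal eigenbasis of $L_v(G+H)$ with eigenvalues $\lambda_i+\mu_j$. A dimension count matches the total number of top simplices of $G+H$, confirming that these exhaust the spectrum.

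The main obstacle is the sign and indexing bookkeeping needed to derive the boundary decomposition, and in particular checking that $V_1\perp V_2$ in the standard inner product on chains so that the cross terms in $\partial^{*}\partial$ really disappear. Once that orthogonality is established, the conclusion is the standard Kronecker-sum spectral identity. Degenerate situations such as $p=0$, where $L_v(G)$ is the zero operator on a graph without edges, are consistent with the formula and can be verified by a direct check after interpreting the empty simplex appropriately.
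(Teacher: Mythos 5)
Your proof is correct and follows essentially the same route as the paper: the facets of $G+H$ are exactly the joins of facets of $G$ with facets of $H$, and the paper's eigenvector $f_{xy}=v_xw_y$ is precisely your tensor-product eigenvector for the Kronecker sum $L_v(G)\otimes I+I\otimes L_v(H)$. You merely make explicit the boundary-operator decomposition and the orthogonality argument that the paper's one-line proof takes for granted.
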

\begin{proof}
If $U$ is the volume Laplacian of $G$ and $K$ is the volume Laplacian of $H$.
Given $U v = \lambda v$ and $V w = \mu w$. The volume Laplacian $W$ of $G+H$
is a $(nm) \times (nm)$ matrix because the volumes of $G$ and $K$ have multiplied. 
Define on the facet $xy$ of $G+H$ joining the facets $x$ and $y$ of $G$ and $H$ the 
value $f_{xy} = v_x w_y$. Now $W f = \lambda f + \mu f$. 
\end{proof}

\section{A ring of networks}

The group $(X,+,0)$ with Zykov addition $+$ (graph join) has a compatible multiplication. 
We formulate it only in the graph case, meaning the case of simplicial complexes which are 
Whitney complexes of graphs, even so it would also work for general simplicial complexes.  \\

Given two graphs $G=(V,E), H=(W,F)$, define the graph $G \cdot H=(V \times W, Q)$,
where the vertex set $V \times W$ is the Cartesian product of the two vertex sets and where
the edge set $Q$ consists of all pairs 
$$  \{ ((a,b),(c,d) \; | \; (a,c) \in E \; {\rm or} \; (b,d) \in F \} \; . $$

This means that we connect $(a,b)$ with any $(c,x)$ if $(a,c) \in E$ 
and with any $(a,b)$ with $(x,d)$ if $(b,d) \in F$. This product is obviously
associative and commutative. 

\begin{lemma}
This product is compatible with the join addition $0$ in the sense that the 
distributivity law $G \cdot (H + K) = G \cdot H + G \cdot K$ holds. 
\end{lemma}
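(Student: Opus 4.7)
The plan is to verify the set-theoretic equality directly: check that the two graphs have the same vertex set and then match the edges by case analysis on which factor the second coordinates lie in.

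First I would unwind the vertex sets. On the left, $V(G \cdot (H+K)) = V(G) \times V(H+K) = V(G) \times (V(H) \cup V(K))$, which equals $(V(G) \times V(H)) \cup (V(G) \times V(K))$. On the right, $V(G \cdot H + G \cdot K) = V(G \cdot H) \cup V(G \cdot K) = (V(G) \times V(H)) \cup (V(G) \times V(K))$. So the vertex sets coincide, and the task reduces to an edge comparison for an arbitrary pair $((a,b),(c,d))$ of distinct vertices.

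Next I would split the comparison into three cases according to the location of $b$ and $d$. Case 1: $b,d \in V(H)$. Then $(b,d) \in E(H+K)$ iff $(b,d) \in E(H)$, so the left-hand edge criterion ``$(a,c) \in E(G)$ or $(b,d) \in E(H+K)$'' matches exactly the defining criterion for an edge of $G \cdot H$, and since both endpoints are in $V(G \cdot H)$, this also matches the edge criterion in $G \cdot H + G \cdot K$. Case 2: $b,d \in V(K)$ is symmetric. Case 3: $b \in V(H)$, $d \in V(K)$ (or vice versa). On the left, the join construction forces $(b,d) \in E(H+K)$, so $((a,b),(c,d))$ is always an edge of $G \cdot (H+K)$. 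On the right, the vertices $(a,b)$ and $(c,d)$ belong to the different summands $G \cdot H$ and $G \cdot K$, so by the join construction for $+$ the pair is automatically an edge of $G \cdot H + G \cdot K$. Hence both edge sets agree in all three cases.

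The main obstacle, such as it is, is purely bookkeeping: making sure Case 3 is handled by the join rule on both sides rather than by the coordinate-edge rule, and not double-counting when $b = d$ (which only happens in Cases 1 and 2 and is handled correctly since $(a,b)=(c,d)$ would require $a=c$ and is excluded by distinctness). Once the three cases are in hand, associativity and commutativity of $\cdot$, noted just before the lemma, let one write the conclusion $G \cdot (H+K) = G \cdot H + G \cdot K$ with no further work.
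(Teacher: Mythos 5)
Your proof is correct and follows essentially the same route as the paper: both arguments first identify the common vertex set $V(G)\times(V(H)\cup V(K))$ and then compare edges by a three-way case split on whether the second coordinates lie in $H$, in $K$, or in different summands. Your write-up is simply a more careful version of the paper's sketch, so no issues.
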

\begin{proof}
Like in the case of the ring of integers, spacial geometric insight can help by 
placing the graphs H,K and H+K into one coordinate axes and G onto the other. Now both 
$G \cdot H + G \cdot K and G \cdot (H+K)$ have as the vertex set the product sets of the vertices. 
As connections in a sum H+K consist of three types, connections within H, 
connections within K and any possible connection between H and K, two points 
$(a,b), (c,d)$ are connected if either $(a,c)$ is an edge in $H$, or 
$(b,d)$ is an edge in K or then if either $a,c$ or $b,d$ are in different graphs. 
\end{proof}

{\bf Examples}: \\
All equality signs mean here ``graph isomorphic".\\
{\bf a)} $P_n \cdot P_m = P_{nm}$ \\
{\bf b)} $K_n + K_m = K_{n+m}$ \\
{\bf c)} $K_n \cdot K_m = K_{n m}$ \\
{\bf d)} $1 + G$  is cone over $G$ \\
{\bf e)} $1+ C_n = W_n$ wheel graph \\
{\bf f)} $P_2 + G$ suspension of $G$ \\
{\bf g)} $P_2 + C_4=P_2+P_2+P_2=K_3 \cdot P_2$ octahedron graph: Oct \\
{\bf h)} $K_{3,3} = P_3 + P_3 = K_2 \cdot P_3$ utility graph \\
{\bf i)} $K_{n,m} = P_n + P_m$ complete bipartite graph \\
{\bf j)} $1+P_n = S_n$ star graph \\
{\bf k)} $1 + S_3 = 1+1+P_n = K_2+P_n$ windmill graph  \\
{\bf i)} $K_2 \cdot C_4 = C_4 + C_4$ three sphere \\
{\bf j)} $K_3 \cdot {\rm Oct} = {\rm Oct} + {\rm Oct} + {\rm Oct}$ is an 7-dimensional sphere  \\
{\bf k)} $K_{n,m}+K_{n,m} = K_2 \cdot K_{n,m} = K_2 (P_n + P_m) = K_{n,n} + K_{m,m}$  \\
{\bf l)} $W_4 + P_2 = 1+C_4 +1+C_4 =K_2 + C_4 + C_4$.

\begin{figure}[!htpb]
\scalebox{1.0}{\includegraphics{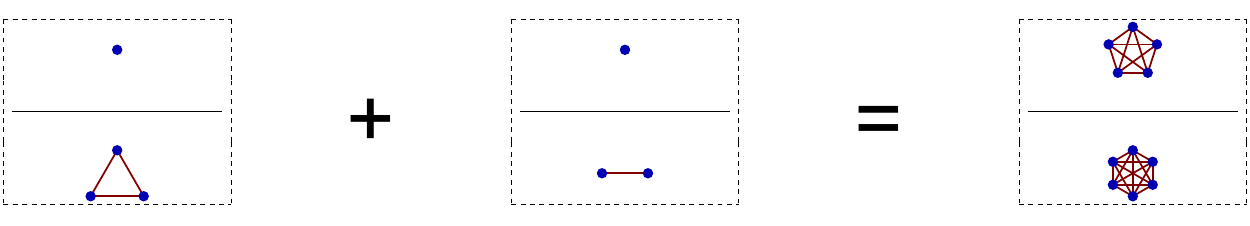}}
\caption{
Working in the field generated by the ring of networks is 
analogues to work with fractions. In school arithmetic, we start with
the monoid $\mathbb{N}=\{0,1,2,3, \dots \}$, then introduce the 
integers $\mathbb{Z}$, then build multiplication leading to the
ring of integers and finally produce the field of fractions. 
Here we see the identity $1/3 + 1/2 = 5/6$ in network arithmetic. 
All graphs involved are here complete graphs where arithmetic is equivalent
to the arithmetic in the rationals $\mathbb{Q}$. 
\label{fraction1}
}
\end{figure}

\begin{figure}[!htpb]
\scalebox{1.0}{\includegraphics{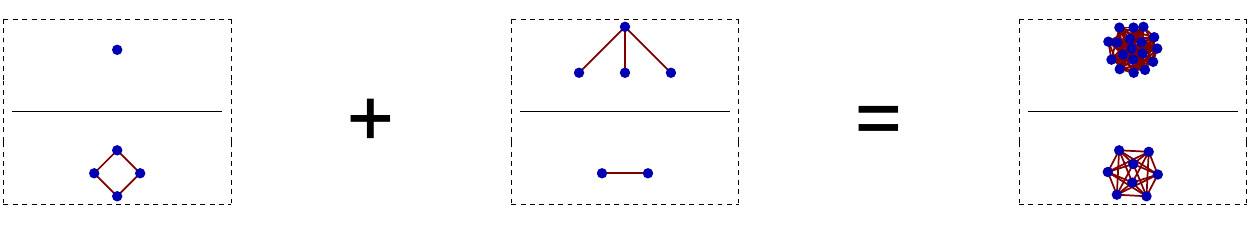}}
\caption{
While the field of networks contains the field $\mathbb{Q}$ 
as $K_n+K_m=K_{n+m}$ and $K_n \cdot K_m = K_{n m}$, we can 
work with more general graphs. Here we see the result of
the addition of two fractions: $1/C_4 + S_3/K_2$, where $C_4$
is the circular graph and $S_3$ is the star graph. The addition
is done as in school arithmetic. We get $K_1 \cdot K_2 + S_3 \cdot C_4
=K_2+S_3 \cdot C_4$ in the nominator and $C_4 \cdot K_2$ in the 
denominator.
\label{fraction2}
}
\end{figure}

One of the consequences of the fact that we know the maximal eigenvalue of $G+H$ is
that we can get the maximal eigenvalue of some graphs immediately.

\begin{coro}
For $n>1$, the maximal eigenvalue of $K_n \cdot G$ is $n |V(G)|$. 
\end{coro}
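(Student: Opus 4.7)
The plan is to reduce the statement to the earlier Zykov-join lemma on eigenvalues by rewriting the product $K_n \cdot G$ as an $n$-fold join, then combine that lemma with the standard upper bound on the largest Laplacian eigenvalue.

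First I would unfold the left-hand side algebraically. Since $K_n = K_1 + K_1 + \cdots + K_1$ ($n$ summands), distributivity of $\cdot$ over $+$ gives $K_n \cdot G = K_1 \cdot G + K_1 \cdot G + \cdots + K_1 \cdot G$. A direct check from the definition of the graph product shows $K_1 \cdot G = G$: the vertex set is $\{*\} \times V(G)$ and two such vertices $(*,a),(*,c)$ are adjacent iff $(*,*) \in E(K_1)$ (impossible) or $(a,c) \in E(G)$. Hence $K_n \cdot G$ is graph-isomorphic to $nG := G + G + \cdots + G$, the $n$-fold Zykov join of $G$ with itself.

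Next I would invoke the preceding corollary, which states that $nG$ has $n|V(G)|$ as an eigenvalue of its scalar Laplacian, with multiplicity at least $n-1$. So the value $n|V(G)|$ definitely appears in the spectrum of $L(K_n \cdot G)$ for $n > 1$.

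To finish, I would argue this value is the maximum. Note that $K_n \cdot G = nG$ has exactly $n|V(G)|$ vertices. For any graph $H$ on $N$ vertices the largest eigenvalue of the scalar Laplacian $L(H)$ satisfies $\lambda_{\max}(L(H)) \leq N$, with equality if and only if the complement $\overline{H}$ is disconnected; this follows from the identity $L(H) + L(\overline{H}) = NI - J$ (with $J$ the all-ones matrix) on the orthogonal complement of the constant vector. The complement of a Zykov join is the disjoint union of complements, so $\overline{nG}$ is the disjoint union of $n$ copies of $\overline{G}$, which is disconnected whenever $n > 1$. Therefore $\lambda_{\max}(L(K_n \cdot G)) = n|V(G)|$, as claimed.

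The only slightly non-routine step is verifying $K_1 \cdot G = G$ so that distributivity cleanly converts $K_n \cdot G$ into the $n$-fold join; once that is in place, the remainder is the join-eigenvalue lemma together with the textbook Laplacian bound, so there is no serious obstacle.
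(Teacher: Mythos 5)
Your proof is correct and follows exactly the route the paper intends: identify $K_n\cdot G$ with the $n$-fold join $nG$ (the paper states this as a consequence of the join-eigenvalue results, and notes separately that $\lambda_k\leq |V(G)|$ makes the value extremal), then invoke the preceding corollary for existence of the eigenvalue $n|V(G)|$ and the standard Laplacian bound for maximality. Your explicit verification of $K_1\cdot G=G$ and the complement argument via $L(H)+L(\overline{H})=NI-J$ supply details the paper leaves implicit, but the argument is the same.
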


This implies for example that the maximal eigenvalue
of $K_n$ is $n$ for $n>1$ and the maximal eigenvalue for $K_{n,m}$
is $n+m$ for $n,m \geq 1$. This is extremal as $\lambda_k \leq |V(G)|$ for
all eigenvalue. 
It also shows that because $K_n P_2$ is the $n-1$ dimensional sphere
(cross polytop), that the maximal eigenvalue is $2n$. Indeed, the octahedron 
$K_3 \cdot P_2$ for example has maximal eigenvalue $6$ and the $16$-cell
$K_4 \cdot P_2$ has maximal eigenvalue $8$ with multiplicity at least $3$. 

\begin{figure}[!htpb]
\scalebox{0.2}{\includegraphics{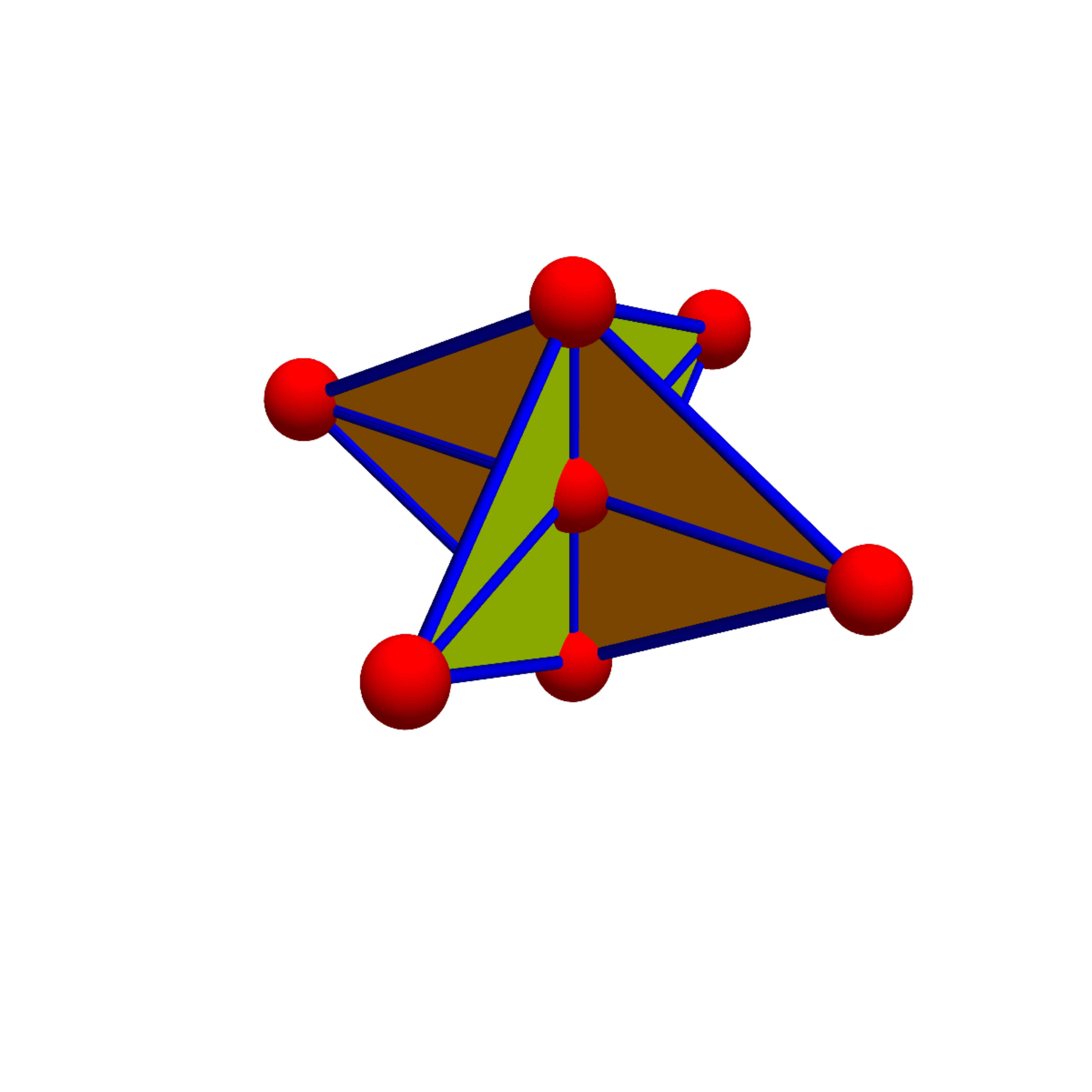}}
\caption{
The graph $1+P_4 + P_2 = S_4 + P_2$ is the suspension of the star graph $S_4=1 + P_4$. 
As a non-prime graph, its maximal eigenvalue is the number of vertices, 
which is $7$. Since $P_4$ has only $0$ eigenvalues, the graph $S_4=1+P_4$ has $\lambda_2=1$.
According to the spectral lemma for $\lambda_2$ above, 
we know that $P_2 + S_4$ has eigenvalue $2+0+1=3$. 
\label{example1}
}
\end{figure}

\begin{figure}[!htpb]
\scalebox{0.2}{\includegraphics{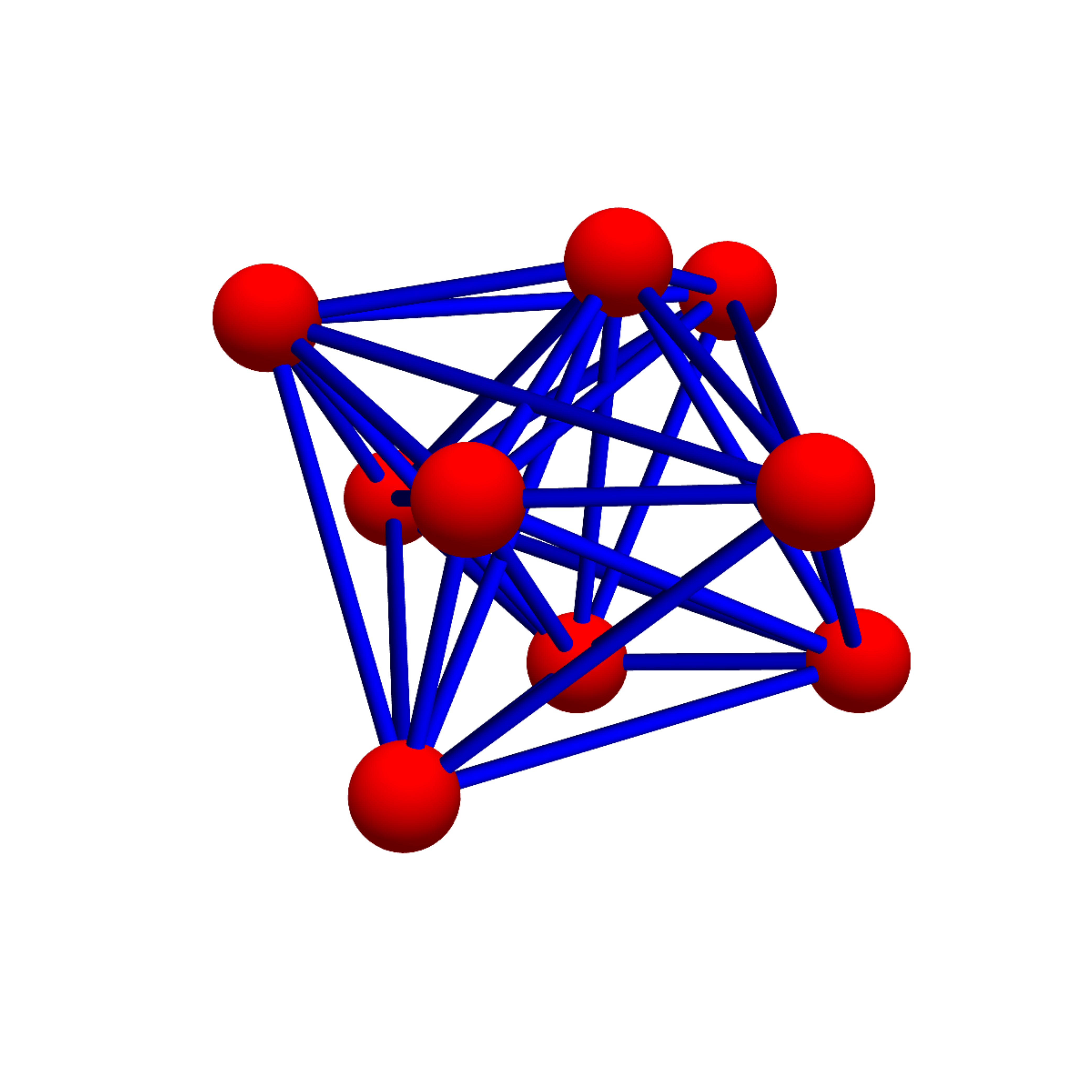}}
\caption{
The graph $K_3 \cdot L_2 = K_3 \cdot (1+ P_2) = K_3 + K_3 \cdot P_2 
= K_3 + {\rm Oct} = 1+1+1+{\rm Oct}$  is the sum of the triangle and the octahedron but it is also $3$ times the 
linear graph $L_2$ of length $2$. It is also the suspension of the cone of the octahedron or
the cone of a 3 sphere. As the graph is not an additive prime, its maximal
eigenvalue is the number of vertices, which is $9$. The multiplicity result above only 
assures the maximal eigenvalue to appear with multiplicity $2$. The eigenvalue $9$ actually
appears with multiplicity $5$. 
\label{example2}
}
\end{figure}

\section{Questions} 

Lets finish with some questions. \\

{\bf A)} We still have not identified the off diagonal terms $(1+A(G'))^{-1}_{xy}$ 
in topological terms, where $G'$ is the connection graph of a complex. 
Having the diagonal terms related to natural sphere index functionals, which are very 
closely related to Morse theory, it is likely that also the off diagonal
terms refer to some interesting topological or dynamical notion.  \\

{\bf B)} Since we have identified two characters $i(G) = 1-\chi(G)$ and $\psi(G)$ 
on the sphere group (a homomorphism from the sphere group to the group 
$\{ |z| = 1 \} | \; z \in \mathbb{C} \}$), 
it is natural to ask whether there is more representation theory of the sphere group for
which the character are classical characters, represented by a trace of a matrix
associated to a sphere. We especially would like to know the collection
of all characters. Since the sphere group is discrete, one could take any distance
(like the minimal number of vertices or edges to be modified to get from one to an isomorphic
image of the other). This is then a topological group which has a Haar measure
as discreteness assures that it is locally compact and such that the Haar measure, the 
counting measure. There are many characters on the sphere group like assigning 
real numbers $k(x)$ to every vertex $x \in V(G)$ of a graph $G$ and $-k(x)$ to vertices
of $-G$. A character is then defined as
$$  X(G-H) = \prod_{x \in V(G)} e^{2\pi i k(x)}/\prod_{x \in V(H)} e^{2\pi i k(x)}  \; . $$
Since adding two graphs takes the union of the vertices, this is multiplicative for the additive
group of spheres. But as both $i(G)$ and $\psi(G)$ tap into the structure 
of the simplices in the graph, it is likely that there are more interesting 
examples of characters. \\

{\bf C)} The join and multiplication in which the addition is not extended yet, forms a 
commutative semiring of type $(2,2,0,0)$: both addition and multiplication are commutative
monoids, distributivity holds and $a \cdot 0 = 0$.
The question of unique prime decomposition in the additive and multiplicative 
monoids of abstract finite simplicial complexes is interesting. Maybe the question is more
approachable on submonoids like the sphere monoid. On the multiplicative submonoid of complete graphs,
the prime factorization is the same than on the multiplicative group of natural numbers without $0$. \\

{\bf D)} We would also like to be able to compute in the field of fractions defined by the ring and
then do a completion solve equations like $G+G=C_5$ leading to $G=C_5/2$ or solving $G \cdot G=K_5$ with 
solution $\sqrt{K_5}$. Both equations can not be solved in the ring 
because $C_5$ is an additive prime (every cyclic graph $C_n$ with $n>4$ is an additive prime) 
and $K_5$ is a multiplicative prime because every graph $K_p$ with $p$ vertices
is a multiplicative prime if $p$ is a rational prime. \\

{\bf E)} There are interesting questions related to the number theoretically defined
prime graphs $G_n$ and prime connection graphs $H_n$. 
One can for example try to estimate the Green function values 
from above. The topology of $G_n$ is linked the Riemann hypothesis as it is directly 
linked to the growth of the Euler characteristic of $G_n$ by the Mertens connection \cite{CountingAndCohomology}.
The Fredholm characteristic of truncations of truncated prime graphs on the other hand gives
a signature, comparing the parity of square free integers $\leq n$ which have 
an even number of prime factors. Such integers correspond to odd-dimensional simplices in the complex.
The Green function values for prime graphs, the matrix entries of $(1+A(H_n))^{-1}_{xy}$ could contain
some interesting number theory. 

\bibliographystyle{plain}

\end{document}